\newtheorem{Theorem}{Theorem}
\newtheorem{Lemma}{Lemma}
\newtheorem{Example}{Example}
\newtheorem{Remark}{Remark}
\newtheorem{Corollary}{Corollary}
\begin{document}

\centerline{\bf \Large Spherically Restricted Random Hyperbolic Diffusion}

\

{Philip Broadbridge $^{1}$ Alexander D. Kolesnik $^{2}$ Nikolai Leonenko $^{3}$ Andriy Olenko $^{1}$ Dareen Omari $^{1}$\\	
	
	\
	
	\noindent	$^{1}$ \quad Department of Mathematics and Statistics, La Trobe University, Melbourne, VIC, 3086, Australia; \\ P.Broadbridge@latrobe.edu.au; a.olenko@latrobe.edu.au;  omari.d@students.latrobe.edu.au
	\\
	\noindent	$^{2}$ \quad Institute of Mathematics and Computer Science, Academy Street 5, Kishinev 2028, Moldova;\\ kolesnik@math.md
	\\
	\noindent	$^{3}$ \quad School of Mathematics, Cardiff University, Senghennydd Road, Cardiff, Wales, UK, CF24 4AG; \\ leonenkon@cardiff.ac.uk.
}	

	\begin{abstract}{This paper investigates solutions of hyperbolic diffusion equations in $\mathbb{R}^3$ with random initial conditions. The solutions are given as spatial-temporal random fields. Their restrictions to the unit sphere $S^2$ are studied.  All assumptions are formulated in terms of the angular power spectrum or the spectral measure of the random initial conditions. Approximations to the exact solutions are given. Upper bounds for the mean-square convergence rates of the approximation fields are obtained. The smoothness properties of the exact solution and its approximation are also investigated. It is demonstrated that the H\"{o}lder-type continuity of the solution depends on the decay of the angular power spectrum. Conditions on the spectral measure of initial conditions that guarantee short or long-range dependence of the solutions are given.	
		Numerical studies are presented to verify the theoretical findings.}
\end{abstract}

\noindent{Stochastic partial differential equations; Hyperbolic diffusion equation; Spherical random field; H\"{o}lder continuity; Long-range dependence; Approximation errors; Cosmic microwave background }
	
\section{Introduction}
Numerous environmental, biological and astrophysical applications require modelling of changes in data on the unit sphere $S^2$ or in the $3D$ space $\mathbb{R}^3$, see~\cite{ weinberg2008cosmology,broadbridge2006dark,marinucci2011random, ade2016planck,angulo2008spatiotemporal,stein2007spatial}. One of conventional tools for such modelling is stochastic partial differential equations (SPDEs), see, for example, \cite{stein2007spatial, angulo2008spatiotemporal, anh2018approximation, broadbridge2019random} and the references therein.
Random fields that are solutions of such SPDEs often exhibit dynamics dependent on initial conditions. Properties of these random fields are subjects of investigation, important both practically and theoretically.

Spherical random fields have been used as a standard model in the astrophysical and cosmological literature in the last decades, see \cite{marinucci2011random, ade2016planck, broadbridge2019random, hamann2019new}. NASA and ESA space missions \cite{ade2016planck} obtained very detailed measurements of Cosmic Microwave Background radiation (CMB), which are interpreted as a realisation of a spherical random field superimposed on an underlying signal of large-scale acoustic waves in plasma near the time of recombination. The theory of the standard inflation scenario uses a Gaussian model for the density fluctuation of this field, see \cite{weinberg2008cosmology,marinucci2011random,ade2016planck}. Several new cosmological models were proposed using non-Gaussian assumptions and employed sophisticated statistical tests to justify possible departures from Gaussianity. The understanding of changes in CMB temperature fluctuations is important to predict future cosmological evolution and accurately reconstruct past states of the universe.
It also can help in the estimation and statistical inference of physical parameters obtained from the CMB data.

SPDEs on $\mathbb{R}^3$ have been extensively studied. However, SPDEs on manifolds attracted a lot of attention only recently, see \cite{lang2015isotropic,anh2018approximation, broadbridge2019random, lan2019regularity}. The results in these papers demonstrate that the continuity properties of solutions and convergence rates of approximations to solutions are determined by decay rates of the angular power spectrum of initial random conditions. This article continues studies of solutions of SPDEs on the sphere. However, in contrast to the above publications that directly model spherical random fields using Laplace or Laplace-Beltrami operators on the sphere, we employ another approach. Namely, we consider the restriction of the stochastic hyperbolic diffusion in $\mathbb{R}^3$ to the unit sphere. Compared to the available literature this approach is more consistent with real CMB observations that exist in $3D$ space but are measured only on $S^2$. 
From a mathematical point of view, additional investigations are required to show that solutions of known models on the sphere admit physically meaningful extensions to $\mathbb{R}^3$ that are consistent with $3D$ observations.
By its construction, our model directly provides this consistency.
The proposed model may find new applications for the next generation of CMB experiments, CMB-$S^4$, which will be collecting $3D$ observations.
A very detailed discussion of SPDEs on manifolds and their physical and mathematical justification for CMB problems can be found in \cite{broadbridge2019random}. The hyperbolic diffusion equation prohibits superluminal propagation of density disturbances that is an unwanted feature of pure diffusion models over super-galactic distances.  In addition, the linear hyperbolic diffusion equation, expressed in terms of co-moving material space coordinates and conformal time coordinate, is a good approximation to the field equation of a scalar field minimally coupled to an expanding Robertson-Walker space-time. However, speed-limited diffusion raises some interesting questions about the dynamics of Shannon entropy. For physical concentrations governed by linear or nonlinear heat diffusion equations of parabolic type, Shannon entropy is fully analogous to thermodynamic entropy and it increases monotonically \cite{broadbridge2008entropy}. It will be explained that at low wave numbers, the hyperbolic diffusion equation behaves as a dissipative diffusion equation but above some cut-off wave number it behaves as a bi-directional wave equation which has increasing entropy when twin pulses separate but has decreasing entropy when pulses approach each other constructively.

The paper is organized as follows. 
Section~\ref{sec5.2} presents definitions and results about spatial-temporal random fields in $\mathbb{R}^3$. It also introduces hyperbolic diffusion equations with random initial conditions and their solutions. Section~\ref{sec5.3} investigates the spatial-temporal hyperbolic diffusion field on the unit sphere. The 
H\"{o}lder-type continuity of the exact solution of the spatial-temporal hyperbolic diffusion field on the sphere is investigated in Section
\ref{sec5.8}. 
In Section~\ref{sec5.4} we
study the dependence structures of the spherical hyperbolic diffusion random fields.
Section~\ref{sec5.5} obtains the mean-square convergence rate to the diffusion field in terms of the angular power spectrum. Section~\ref{sec5.6} provides some numerical results. Finally, Shannon entropy behaviour is discussed in Section~\ref{sec5.9}, followed by some conclusions. 

All numerical computations and simulations in this paper were performed using the software R version 3.6.1 and Python version 3.7.5. The results were derived using the HEALPix representation of spherical data, see \cite{gor} and \mbox{\href{http://healpix.sourceforge.net}{http://healpix.sourceforge.net}}. In particular, the R package rcosmo \cite{fryer2019rcosmo}, \cite{fryer2018rcosmo} was used for computations and visualisations of the obtained results. The Python package healpy  was used for fast spherical harmonics generation of spherical maps from Laplace series coefficients. The R and Python code used for numerical examples in Section~\ref{sec5.6} are freely available in the folder "Research materials" from the  website \mbox{\href{https://sites.google.com/site/olenkoandriy/}{https://sites.google.com/site/olenkoandriy/}}.

We will use the symbol C to denote constants that are not important for our exposition. The same symbol may be used for different constants appearing in the same proof.
\section{Spatial random hyperbolic diffusion}\label{sec5.2}
\numberwithin{equation}{section}
This section reviews the basic theory of random fields in $\mathbb{R}^3$ and introduces a hyperbolic diffusion with random initial conditions. Then the solution of the diffusion equation is derived and analysed.
We consider the hyperbolic diffusion equation
\begin{equation}\label{telegraph}
	\frac{1}{c^{2}}\frac{\partial ^{2} q(\mathbf{x},t)}{\partial t^{2}}+ 
	\frac{1}{D}\frac{\partial q(\mathbf{x},t)}{\partial t}= \Delta q(\mathbf{x},t),  
\end{equation}
$$\mathbf{x}=(x_{1},x_{2},x_{3})\in\Bbb R^3 , \quad t\geq 0, \quad D>0, \quad c>0,$$
subject to the random initial conditions:
\begin{equation}\label{incond}
	q(\mathbf{x},t)|_{t=0}=\eta (\mathbf{x}), \qquad \left. \frac{\partial q(\mathbf{x},t)}{\partial t}\right\vert _{t=0}=0,
\end{equation}
where $\Delta$ is the Laplacian in $\Bbb R^3$ and the random field $\eta(\bold x)=\eta(\bold x,\omega), \; \bold x\in\Bbb R^3, \; \omega\in\Omega$, defined on a suitable complete probability space $(\Omega,\mathcal F, P)$, is assumed to be a measurable, mean-square continuous, wide-sense homogeneous and isotropic with zero mean and the  covariance function $B(\Vert\bold x-\bold y\Vert)  = \text{Cov}(\eta(\bold x),\eta(\bold y))$.

The covariance function has the following representation
\begin{equation*}
	\aligned 
	B(\Vert\bold x-\bold y\Vert) & 
	= \int_{\Bbb R^3} \cos(\langle  \kappa, \bold x-\bold y \rangle) \; F(d \kappa)
	= \int_0^{\infty} \frac{\sin(\mu \Vert\bold x-\bold y\Vert)}{\mu \Vert\bold x-\bold y\Vert} \; G(d\mu) , 
	\endaligned
\end{equation*}
for some bounded, non-negative measures $F(\cdot)$ on $(\Bbb R^3, \mathcal B(\Bbb R^3))$ and $G(\cdot)$ on 
$(\Bbb R_+^1, \mathcal B(\Bbb R_+^1))$, such that 
$$F(\Bbb R^3)=G([0,\infty)) = B(0), \qquad G(\mu) = \int\limits_{\{\Vert \kappa\Vert < \mu\}} F(d \kappa),$$ 
see \cite{yadrenko1983spectral}, pp. 1-5 and \cite{ivanov1989statistical}, pp. 10-15 for more details.

Then there exists a complex-valued orthogonally scattered random measure $Z(\cdot)$ 
such that, for every $\bold x\in\Bbb R^3$, the field $\eta(\bold x)$ itself has the spectral representation 
\begin{equation}\label{spectrepres}
	\eta(\bold x) = \int_{\Bbb R^3} e^{i\langle  \kappa, \bold x \rangle} \; Z(d \kappa) ,\quad \ \bold E |Z(\Delta)|^2 = F(\Delta), \qquad \Delta\in\mathcal B(\Bbb R^3).
\end{equation}
Let $Y_{lm}(\theta,\varphi)$, $\theta \in [0, \pi]$, $\varphi \in [0,2\pi)$, $l=0,1,\dots$, $m=-l,\dots,l$, be complex spherical harmonics defined by the relation 
$$Y_{lm}(\theta,\varphi)=(-1)^m \bigg(\frac{(2l+1)(l-m)!}{4\pi(l+m)!}  \bigg)^{1/2}\exp(im\varphi)P_l^{m}(\cos(\theta)), $$
where $P_l^{m}(\cdot)$ are the associated Legendre polynomials with indices $l$ and $m$.
For spherical harmonics it holds 
$$Y_{l0}(0,0)=\sqrt{\frac{2l+1}{4\pi}},\ \ \quad Y_{l0}(\theta,\varphi)=\sqrt{\frac{2l+1}{4\pi}}P_{l0}(\cos \theta),$$ 
$$Y_{lm}^{*}(\theta,\varphi)=(-1)^mY_{l(-m)}(\theta,\varphi),$$
$$Y_{lm}(\pi-\theta,\varphi+\pi)=(-1)^lY_{lm}(\theta,\varphi),$$  
$$\int_{0}^{\pi}\int_{0}^{2\pi}Y_{lm}^{*}(\theta,\varphi)Y_{l'm'}(\theta,\varphi)\sin \theta d\varphi d\theta=\delta_l^{l'}\delta_m^{m'},$$ 
where the symbol * denotes the complex conjugation and $\delta_l^{l'}$ is the Kronecker delta function. The addition formula for spherical harmonics gives 

$$  \sum_{m=-l}^l Y_{lm}(\theta,\varphi)Y_{lm}^{*}(\theta,\varphi)=\frac{2l+1}{4\pi}.$$
The Bessel function $J_\nu(\cdot)$ of the first kind of order $\nu$ is defined by 
$$J_\nu (\mu)=\sum_{n=0}^{\infty}\frac{(-1)^n}{n! \Gamma(n+\nu+1)}\bigg(\frac{\mu}{2}\bigg)^{2n+\nu},$$
where $\Gamma(\cdot)$ is the Gamma function.

It admits the following representation by the Poisson integral, see~(10.9.4) in \cite{NIST:DLMF},
$$J_\nu(\mu)=\frac{2(\mu/2)^\nu}{\sqrt{\pi}\Gamma(\nu+\frac{1}{2})}\int_{0}^{1}(1-t^2)^{\nu-\frac{1}{2}}\cos(\mu t)dt,\ \nu\geq\frac{1}{2}.$$ 
By the addition theorem for Bessel functions, see, for example, \cite{ivanov1989statistical}, p. 14, 
\begin{equation}\label{altrepres}
	\aligned
	\eta(\bold x) & = \tilde\eta(\theta,\varphi,r)
	= \pi\sqrt{2} \sum_{l=0}^{\infty} \sum_{m=-l}^l Y_{lm}(\theta,\varphi) \int_0^{\infty} \frac{J_{l+1/2}(\mu r)}{(\mu r)^{1/2}} \; Z_{lm}(d\mu) , 
	\endaligned 
\end{equation}
where $Z_{lm}(\cdot)$ is a family of random measures on $(\Bbb R_+^1, \mathcal B(\Bbb R_+^1))$, such that 
\begin{equation}\label{Zlm}
	\bold E Z_{lm}(\Delta_1) Z_{l'm'}(\Delta_2) = \delta_l^{l'} \delta_m^{m'} G(\Delta_1\cap\Delta_2),\quad \Delta_i \in \mathcal B(\Bbb R_+^1),\quad i=1,2.
\end{equation}
The stochastic integrals in (\ref{spectrepres}) and (\ref{altrepres}) are viewed as an $L_2(\Omega)$ integrals with the structural measures $F$ and $G$ correspondingly. 

Let us consider the initial conditions of the form: 
\begin{equation}\label{incondelta}
	q(\bold x,t)|_{t=0}=\delta(\bold x), \qquad \left. 
	\frac{\partial q(\bold x,t)}{\partial t}\right\vert _{t=0}=0,
\end{equation}
where $\delta(\bold x)$ is the Dirac delta-function. 

Let $Q(\bold x,t), \; \bold x\in\Bbb R^3, \; t\geq 0,$ be the fundamental solution (or the Green's function) of the initial-value problem (\ref{telegraph}) and (\ref{incondelta}) and 
\begin{equation}\label{FourierTrans}
	H( \kappa,t) = \int_{\Bbb R^3} e^{i\langle  \kappa, \bold x \rangle} \; Q(\bold x,t) \; d\bold x , 
	\quad  \kappa\in \Bbb R^3, \quad t\geq0,
\end{equation}
be its Fourier transform.
\begin{Theorem}\label{th5.11}
	The Fourier transform {\rm (\ref{FourierTrans})} {of the initial-value problem} {\rm(\ref{telegraph})} and {\rm(\ref{incondelta})} {\it is given by the formula}
	\begin{align}\label{FourierTransForm} 
		& H( \kappa,t) = \exp\left( -\frac{c^2}{2D}t \right) \\ 
		& \times \biggl\{ \biggl[ \cosh\left( ct\sqrt{\frac{c^2}{4D^2}-\Vert \kappa\Vert^2} \right)  
		+ \frac{c}{2D\sqrt{\frac{c^2}{4D^2}-\Vert \kappa\Vert^2}} \sinh\left( ct\sqrt{\frac{c^2}{4D^2}-\Vert \kappa\Vert^2} \right) \biggr] \Bbb I_{\{\Vert \kappa\Vert\le \frac{c}{2D}\}} \\ 
		& \quad + \biggl[ \cos\left( ct\sqrt{\Vert \kappa\Vert^2 - \frac{c^2}{4D^2}} \right)  
		+ \frac{c}{2D\sqrt{\Vert \kappa\Vert^2 - \frac{c^2}{4D^2}}} \sin\left( ct\sqrt{\Vert \kappa\Vert^2 - \frac{c^2}{4D^2}} \right) \biggr] \Bbb I_{\{\Vert \kappa\Vert>\frac{c}{2D}\}} \biggr\},
	\end{align}
	{\it where $\Bbb I_{\{\cdot\}}$ denotes the indicator function.}
\end{Theorem}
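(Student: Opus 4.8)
The plan is to pass to the spatial Fourier transform and thereby reduce the partial differential equation~\eqref{telegraph} to an ordinary differential equation in the time variable. Applying the transform $\int_{\Bbb R^3} e^{i\langle \kappa,\bold x\rangle}(\cdot)\,d\bold x$ to both sides of~\eqref{telegraph}, and using the standard identity $\int_{\Bbb R^3} e^{i\langle \kappa,\bold x\rangle}\Delta Q(\bold x,t)\,d\bold x = -\Vert\kappa\Vert^2 H(\kappa,t)$ (which follows from integration by parts together with the fact that spatial differentiation passes to multiplication by $-\Vert\kappa\Vert^2$ on the Fourier side), one obtains
\begin{equation*}
\frac{1}{c^2}\,\frac{\partial^2 H(\kappa,t)}{\partial t^2} + \frac{1}{D}\,\frac{\partial H(\kappa,t)}{\partial t} = -\Vert\kappa\Vert^2\, H(\kappa,t).
\end{equation*}
Since the Fourier transform of $\delta(\bold x)$ equals $1$ and the transform commutes with $\partial_t$, the initial conditions~\eqref{incondelta} become $H(\kappa,0)=1$ and $\partial_t H(\kappa,t)|_{t=0}=0$. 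Thus for each fixed $\kappa$ the problem reduces to a homogeneous second-order linear ODE in $t$ with constant coefficients subject to two initial conditions.

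Next I would solve this ODE explicitly. Rewriting it as $\partial_t^2 H + (c^2/D)\,\partial_t H + c^2\Vert\kappa\Vert^2 H = 0$, the characteristic equation is $\lambda^2 + (c^2/D)\lambda + c^2\Vert\kappa\Vert^2 = 0$, whose roots are
\begin{equation*}
\lambda_{\pm} = -\frac{c^2}{2D} \pm c\sqrt{\frac{c^2}{4D^2}-\Vert\kappa\Vert^2}.
\end{equation*}
The qualitative behaviour of the solution is governed by the sign of $\frac{c^2}{4D^2}-\Vert\kappa\Vert^2$, which naturally produces the two regimes appearing in~\eqref{FourierTransForm}. When $\Vert\kappa\Vert\le \frac{c}{2D}$ the roots are real and the general solution is $e^{-c^2 t/(2D)}\bigl[A\cosh(\beta t)+B\sinh(\beta t)\bigr]$ with $\beta = c\sqrt{c^2/(4D^2)-\Vert\kappa\Vert^2}$; when $\Vert\kappa\Vert> \frac{c}{2D}$ the roots are complex conjugate and the general solution is $e^{-c^2 t/(2D)}\bigl[A\cos(\omega t)+B\sin(\omega t)\bigr]$ with $\omega = c\sqrt{\Vert\kappa\Vert^2-c^2/(4D^2)}$.

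Finally I would fix the constants $A,B$ from the initial data. In both cases the condition $H(\kappa,0)=1$ forces $A=1$, while differentiating and evaluating at $t=0$ gives $-\frac{c^2}{2D}+\beta B=0$ (respectively $-\frac{c^2}{2D}+\omega B=0$), so that $B=\frac{c^2}{2D\beta}=\frac{c}{2D\sqrt{c^2/(4D^2)-\Vert\kappa\Vert^2}}$ (respectively $B=\frac{c}{2D\sqrt{\Vert\kappa\Vert^2-c^2/(4D^2)}}$). Substituting these values reproduces exactly the two bracketed expressions in~\eqref{FourierTransForm}, weighted by the indicators $\Bbb I_{\{\Vert\kappa\Vert\le c/(2D)\}}$ and $\Bbb I_{\{\Vert\kappa\Vert> c/(2D)\}}$. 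The main point requiring care is the rigorous justification of the transform step for the distributional initial condition $\delta(\bold x)$; this is best handled by interpreting $H(\kappa,t)$ as the transform of the tempered-distribution-valued Green's function and verifying that the transformed Cauchy problem is well posed. A minor additional check is the borderline wave number $\Vert\kappa\Vert=c/(2D)$, where $\beta\to0$ and the characteristic equation has a repeated root: here one verifies that $\frac{c}{2D\beta}\sinh(\beta t)\to\frac{c^2 t}{2D}$, so that the $\cosh/\sinh$ formula extends continuously and agrees with the degenerate solution $e^{-c^2 t/(2D)}\bigl(1+c^2 t/(2D)\bigr)$.
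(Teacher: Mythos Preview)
Your proposal is correct and follows essentially the same approach as the paper: Fourier-transforming~\eqref{telegraph} with the delta initial data to obtain the constant-coefficient ODE $\frac{1}{c^2}H''+\frac{1}{D}H'+\Vert\kappa\Vert^2 H=0$ with $H(\kappa,0)=1$, $H_t(\kappa,0)=0$, and then solving it via the characteristic roots. The only cosmetic difference is that the paper writes the general solution in the exponential form $K_1 e^{z_1 t}+K_2 e^{z_2 t}$ before converting to $\cosh/\sinh$ and $\cos/\sin$, whereas you pass directly to the hyperbolic/trigonometric form; your added remarks on the distributional justification and the borderline wave number $\Vert\kappa\Vert=c/(2D)$ are welcome refinements.
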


\begin{proof}[Proof of Theorem \ref{th5.11}]
	The Fourier transform (\ref{FourierTrans}) is the solution of the initial-value problem 
	
	\begin{equation}\label{FourierInitValue}
		\aligned 
		& \frac{1}{c^{2}}\frac{d^2H( \kappa,t)}{dt^2} + 
		\frac{1}{D}\frac{dH( \kappa,t)}{dt} + \Vert \kappa\Vert^2 H( \kappa,t) = 0 ,\\ 
		& \quad H( \kappa,t)|_{t=0} = 1, \qquad \left. \frac{\partial H( \kappa,t)}{\partial t}\right\vert _{t=0} = 0 , 
		\qquad  \kappa\in\Bbb R^3 .
		\endaligned 
	\end{equation}
	The characteristic equation for the ordinary differential equation in (\ref{FourierInitValue}) is 
	\begin{equation*}
		\frac{1}{c^{2}} z^2 + \frac{1}{D} z + \Vert \kappa\Vert^2 = 0 ,
	\end{equation*}
	with the roots 
	\begin{equation}\label{Roots}
		z_1( \kappa) = -\frac{c^2}{2D} - \sqrt{\frac{c^4}{4D^2} - c^2\Vert \kappa\Vert^2} , \qquad 
		z_2( \kappa) = -\frac{c^2}{2D} + \sqrt{\frac{c^4}{4D^2} - c^2\Vert \kappa\Vert^2} .
	\end{equation}
	Therefore, the general solution of the ordinary differential equation in (\ref{FourierInitValue}) has the form
	\begin{equation}\label{GenSol}
		H( \kappa,t) = K_1( \kappa) e^{z_1( \kappa)t} + K_2( \kappa) e^{z_2( \kappa)t} , 
	\end{equation}
	where $K_1( \kappa), K_2( \kappa)$ are some functions that do not depend on $t$ and $z_1( \kappa), z_2( \kappa)$ are given by (\ref{Roots}). From the initial conditions in (\ref{FourierInitValue}) we obtain the system of equation to find these functions 
	\begin{align}\label{KK}
		K_1( \kappa)+K_2( \kappa)=1, \qquad z_1( \kappa)K_1( \kappa)+z_2( \kappa)K_2 ( \kappa)= 0,
	\end{align}
	which results in
	\begin{equation}\label{Coeff}
		K_1( \kappa) = \frac{1}{2} - \frac{c}{4D\sqrt{\frac{c^2}{4D^2}-\Vert \kappa\Vert^2}} ,  \qquad 
		K_2 ( \kappa)= \frac{1}{2} + \frac{c}{4D\sqrt{\frac{c^2}{4D^2}-\Vert \kappa\Vert^2}} . 
	\end{equation}
	Thus, by (\ref{GenSol}) and (\ref{Roots}) the solution of the initial-value problem (\ref{FourierInitValue}) is
	\begin{align*}
		H( \kappa,t) &= \left( \frac{1}{2} - \frac{c}{4D\sqrt{\frac{c^2}{4D^2}-\Vert \kappa\Vert^2}} \right) 
		\exp\left[ t \left( -\frac{c^2}{2D} - \sqrt{\frac{c^4}{4D^2} - c^2\Vert \kappa\Vert^2} \right) \right]\\
		& + \left( \frac{1}{2} + \frac{c}{4D\sqrt{\frac{c^2}{4D^2}-\Vert \kappa\Vert^2}} \right) 
		\exp\left[ t \left( -\frac{c^2}{2D} + \sqrt{\frac{c^4}{4D^2} - c^2\Vert \kappa\Vert^2} \right) \right]\\
		& = \exp\left( -\frac{c^2}{2D} t \right) \bigg( \frac{1}{2}\left[ \exp\left( t\sqrt{\frac{c^4}{4D^2} - c^2\Vert \kappa\Vert^2} \right) + \exp\left( -t\sqrt{\frac{c^4}{4D^2} - c^2\Vert \kappa\Vert^2} \right) \right]\\	
		& + \frac{c}{2D\sqrt{\frac{c^2}{4D^2}-\Vert \kappa\Vert^2}} \; \frac{1}{2}\left[ \exp\left( t\sqrt{\frac{c^4}{4D^2} - c^2\Vert \kappa\Vert^2} \right) - \exp\left( -t\sqrt{\frac{c^4}{4D^2} - c^2\Vert \kappa\Vert^2} \right) 
		\right]\bigg) \\
		& = \exp\left( -\frac{c^2}{2D} t \right) \biggl\{ \cosh\left( t\sqrt{\frac{c^4}{4D^2} - c^2\Vert \kappa\Vert^2}  \right) 
		+ \frac{c}{2D\sqrt{\frac{c^2}{4D^2}-\Vert \kappa\Vert^2}} \;
			\end{align*}
		\begin{align*} &\times \sinh\left( t\sqrt{\frac{c^4}{4D^2} - c^2\Vert \kappa\Vert^2}  \right) \biggr\}  = \exp\left( -\frac{c^2}{2D}t \right) 
		\biggl\{ \biggl[ \cosh\left( ct\sqrt{\frac{c^2}{4D^2}-\Vert \kappa\Vert^2} \right)\\  
		&+ \frac{c}{2D\sqrt{\frac{c^2}{4D^2}-\Vert \kappa\Vert^2}} \sinh\left( ct\sqrt{\frac{c^2}{4D^2}-\Vert \kappa\Vert^2} \right) \biggr] \Bbb I_{\{\Vert \kappa\Vert\le \frac{c}{2D}\}} 
		+ \biggl[ \cos\left( ct\sqrt{\Vert \kappa\Vert^2 - \frac{c^2}{4D^2}} \right)\\
		&+ \frac{c}{2D\sqrt{\Vert \kappa\Vert^2 - \frac{c^2}{4D^2}}} \sin\left( ct\sqrt{\Vert \kappa\Vert^2 - \frac{c^2}{4D^2}} \right) \biggr] \Bbb I_{\{\Vert \kappa\Vert>\frac{c}{2D}\}} \biggr\} .
	\end{align*}
	The theorem is proved.
\end{proof}
\begin{Remark}\label{til}
	The function $H( \kappa, t)$ given by {\rm (\ref{FourierTransForm})} is radial, that is, there exists a function $\tilde{H}(\cdot,\cdot)$ defined on $(0,\infty)\times (0,\infty)$ such that 
	$H( \kappa, t) = \tilde{H}(\Vert \kappa\Vert, t)$.
\end{Remark}
\begin{Remark}\label{remPB}
	$c/2D$ is a cut-off wave number below which the Fourier modes decay exponentially and are non-travelling as in standard heat conduction. At low wave numbers the governing PDE may be regarded as a delayed diffusion equation, as in Cattaneo's theory of heat propagation {\rm \cite{cattaneo1958forme}}. At higher wave numbers, it can easily be seen from the one-dimensional solutions that the Fourier components may be viewed as travelling waves but with exponentially decaying amplitude. At high wave numbers, the governing PDE may be regarded as a damped wave equation.	
\end{Remark}
Let us denote $\tilde H(\mu,t) = \tilde H_1(\mu,t) + \tilde H_2(\mu,t),$ such that
\begin{align}\label{ppp}
	\tilde H_1(\mu,t) & = \exp\left( -\frac{c^2}{2D}t \right)\biggl[ \cosh\left( ct\sqrt{\frac{c^2}{4D^2}-\mu^2} \right)   \notag\\ 
	& + \frac{c}{2D\sqrt{\frac{c^2}{4D^2}-\mu^2}} \sinh\left( ct\sqrt{\frac{c^2}{4D^2}-\mu^2} \right) \biggr] \Bbb I_{\{|\mu|\le \frac{c}{2D}\}} ,
\end{align}
\begin{align}
	\tilde H_2(\mu,t) & = \exp\left( -\frac{c^2}{2D}t \right)\biggl[ \cos\left( ct\sqrt{\mu^2 - \frac{c^2}{4D^2}} \right)   \notag\\ 
	& + \frac{c}{2D\sqrt{\mu^2 - \frac{c^2}{4D^2}}} \sin\left( ct\sqrt{\mu^2 - \frac{c^2}{4D^2}} \right) \biggr] \Bbb I_{\{|\mu|>\frac{c}{2D}\}} . \label{pprime}
\end{align}
\begin{Lemma}\label{lem4.2}
	It holds 
	\begin{align}\label{c4eq13}
		0 \leq \tilde H_1(\mu,t) \leq 1,
	\end{align}
	and
	\begin{equation}\label{EQ4}
		|\tilde H_2(\mu,t)|\leq \exp\bigg(-\dfrac{c^2}{2D}t\bigg)\bigg[1+\dfrac{c^2}{2D}t\bigg].
	\end{equation}
\end{Lemma}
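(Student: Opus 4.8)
The plan is to handle the two regimes separately, in each case reducing the bound to an elementary scalar inequality after a simplification of the coefficients. Throughout I set $a := c^2/(2D)$, so that $\exp(-c^2 t/(2D)) = e^{-at}$. In the subluminal regime $|\mu|\le c/(2D)$, writing $b := c\sqrt{c^2/(4D^2)-\mu^2}$, one has $b^2 = c^4/(4D^2) - c^2\mu^2 \le a^2$, hence $0\le b\le a$, while the coefficient of the hyperbolic sine simplifies exactly to $a/b$ (both equal $\tfrac{c}{2D}\big/\sqrt{c^2/(4D^2)-\mu^2}$). Thus $\tilde H_1(\mu,t) = e^{-at}[\cosh(bt) + (a/b)\sinh(bt)]$. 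Likewise, in the regime $|\mu|>c/(2D)$, with $\beta := c\sqrt{\mu^2 - c^2/(4D^2)}>0$, the same simplification gives $\tilde H_2(\mu,t) = e^{-at}[\cos(\beta t) + (a/\beta)\sin(\beta t)]$.

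For the lower bound in (\ref{c4eq13}) I note that for $t\ge 0$ each of the factors $e^{-at}>0$, $\cosh(bt)\ge 1$, $\sinh(bt)\ge 0$ and $a/b>0$ is nonnegative, so $\tilde H_1(\mu,t)\ge 0$ is immediate. The upper bound reduces to the scalar inequality $\cosh(bt) + (a/b)\sinh(bt) \le e^{at}$ for $0\le b\le a$. I would establish this by differentiation: setting $\phi(t) := e^{at} - \cosh(bt) - (a/b)\sinh(bt)$, one checks $\phi(0)=0$ and $\phi'(0) = a - a = 0$, while, using $e^{at}=\cosh(at)+\sinh(at)$, $\phi''(t) = [a^2\cosh(at) - b^2\cosh(bt)] + a[a\sinh(at) - b\sinh(bt)] \ge 0$, since $a\ge b\ge 0$ makes both bracketed differences nonnegative. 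Convexity then forces $\phi'\ge 0$ and hence $\phi\ge 0$, i.e. $\tilde H_1(\mu,t)\le 1$. The boundary case $b=0$ (i.e. $|\mu|=c/(2D)$) is recovered as the limit $\sinh(bt)/b\to t$, giving $\tilde H_1 = e^{-at}(1+at)\le 1$.

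For (\ref{EQ4}) the argument is much shorter. The triangle inequality together with $|\sin(\beta t)|\le \beta t$ for $t\ge 0$ yields $|\cos(\beta t) + (a/\beta)\sin(\beta t)| \le 1 + (a/\beta)\cdot \beta t = 1 + at$. Multiplying by $e^{-at}$ gives $|\tilde H_2(\mu,t)| \le e^{-at}(1+at)$, which, since $at = \tfrac{c^2}{2D}t$, is precisely the claimed bound.

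The computations are all routine; the only genuine step is the scalar inequality $\cosh(bt) + (a/b)\sinh(bt) \le e^{at}$, and I expect the sign bookkeeping for $\phi''$ (verifying the two term-by-term comparisons under $a\ge b$) to be the one place demanding a little care. As an alternative to the convexity argument I would keep in reserve the observation that $x\mapsto \sinh(xt)/x$ is increasing on $(0,\infty)$ — which follows from $u\cosh u - \sinh u\ge 0$ for $u\ge 0$ — so that $\cosh(bt)\le\cosh(at)$ and $(a/b)\sinh(bt) = a\cdot\sinh(bt)/b \le a\cdot\sinh(at)/a = \sinh(at)$ combine to give $\cosh(bt)+(a/b)\sinh(bt)\le e^{at}$ directly.
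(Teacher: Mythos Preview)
Your argument is correct. For $\tilde H_2$ it coincides with the paper's: both invoke $|\sin x|\le |x|$ (equivalently $|\sin x/x|\le 1$) on the explicit formula. For $\tilde H_1$ the route is genuinely different. The paper returns to the ODE representation $\tilde H_1(\mu,t)=K_1(\mu)e^{z_1(\mu)t}+K_2(\mu)e^{z_2(\mu)t}$ with the explicit roots $z_{1,2}$ and coefficients $K_{1,2}$, and then shows $\partial_t\tilde H_1\le 0$ directly by factoring the derivative and checking the signs $z_2\le 0$, $K_1\le 0$ in the regime $|\mu|\le c/(2D)$. You instead keep the $\cosh/\sinh$ form and reduce everything to the clean scalar inequality $\cosh(bt)+(a/b)\sinh(bt)\le e^{at}$ for $0\le b\le a$, which you prove by a second-derivative (convexity) argument; your reserve proof via monotonicity of $x\mapsto \sinh(xt)/x$ is even shorter. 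Your approach is slightly more self-contained (no need to revisit the roots/coefficients from the earlier theorem) and makes the nonnegativity $\tilde H_1\ge 0$ explicit, which the paper leaves implicit; the paper's approach has the advantage of being a one-line derivative check once the exponential form is on the table.
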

\begin{proof}[Proof of Lemma \ref{lem4.2}]
	It follows from~(\ref{GenSol}), (\ref{KK}) and (\ref{Coeff}) that for $|\mu|\leq \frac{c}{2D}$ it holds
	\begin{align*}
		\tilde{H}_1(\mu,t)&=e^{z_2(\mu)t}(K_2(\mu)+K_1(\mu)e^{(z_1(\mu)-z_2(\mu))t})=e^{z_2(\mu)t}\bigg(1+(e^{(z_1(\mu)-z_2(\mu))t}-1)K_1(\mu)\bigg)\\
		&=e^{z_2(\mu)t}\bigg(1+(e^{-\frac{z_2(\mu)}{K_1(\mu)}t}-1)K_1(\mu)\bigg)=(1-K_1(\mu))e^{z_2(\mu)t}+K_1(\mu) e^{\big(z_2(\mu)-\frac{z_2(\mu)}{K_1(\mu)}\big)t}.
	\end{align*} 
	Note that $\tilde{H}_1(\mu,0)=1$ for $|\mu|\leq \frac{c}{2D}$ and
	\begin{align*}
		\frac{\partial \tilde{H}_1(\mu,t)}{\partial t}&=(1-K_1(\mu))z_2(\mu)e^{z_2(\mu)t}+K_1(\mu)\bigg(z_2(\mu)-\frac{z_2(\mu)}{K_1(\mu)}\bigg)e^{z_2(\mu)-\frac{z_2(\mu)}{K_1(\mu)}t}\\
		&=(1-K_1(\mu))z_2(\mu)e^{z_2(\mu)t}\bigg(1-e^{-\frac{z_2(\mu)}{K_1(\mu)}t}\bigg) \leq 0,
	\end{align*}
	because $z_2(\mu) \leq 0$ and $K_1(\mu) \leq 0$ if $|\mu | \leq \frac{c}{2D}$. Thus, $\tilde{H}_1(\mu,t) \leq \tilde{H}_1(\mu,0)=1.$

	As $\big|\frac{\sin(x)}{x}\big|\leq 1$, one obtains the upper bound~(\ref{EQ4}) from the representation~(\ref{pprime}) for $\tilde H_2(\cdot,\cdot)$.
\end{proof}
\begin{Theorem}\label{th5.2}
	The solution $q(\bold x,t) = q(\bold x,t,\omega), \; \bold x\in\Bbb R^3, \; t\geq 0, \; \omega\in\Omega,$ of the initial-value problem {\rm(\ref{telegraph})-(\ref{incond})} can be written as the convolution 
	\begin{align}\label{Convol}
		q(\bold x,t)=\int_{\Bbb R^3} e^{i( \kappa, \bold x)}H( \kappa,t) Z(d \kappa) .
	\end{align}
	The covariance function of the spatio-temporal random field {\rm (\ref{Convol})} is 
	\begin{equation}\label{Covar}
		\text{Cov}(q(\bold x,t), q(\bold x',t')) = \int_{\Bbb R^3} e^{\langle  \kappa, \bold x-\bold x' \rangle} \; 
		H( \kappa,t) \; H( \kappa,t') \; F(d \kappa) .  
	\end{equation}
\end{Theorem}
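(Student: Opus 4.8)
The plan is to exploit the linearity of (\ref{telegraph}) together with the defining property of the Green's function $Q(\bold x,t)$. Since $Q$ solves the problem with the point-mass datum (\ref{incondelta}), superposition shows that the solution driven by the random field $\eta$ is the spatial convolution
$$q(\bold x,t)=\int_{\Bbb R^3} Q(\bold x-\bold y,t)\,\eta(\bold y)\,d\bold y,$$
which is the meaning of ``convolution'' in the statement; in particular it reproduces the initial data $q|_{t=0}=\delta*\eta=\eta$ and $\partial_t q|_{t=0}=0$ required by (\ref{incond}). First I would substitute the spectral representation (\ref{spectrepres}) of $\eta$ into this convolution and interchange the deterministic $\bold y$-integral with the stochastic $Z(d\kappa)$-integral. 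After the substitution $\bold z=\bold x-\bold y$ the inner integral becomes $e^{i\langle\kappa,\bold x\rangle}\int_{\Bbb R^3}Q(\bold z,t)\,e^{-i\langle\kappa,\bold z\rangle}\,d\bold z=e^{i\langle\kappa,\bold x\rangle}H(-\kappa,t)$, and the radiality established in Remark~\ref{til} gives $H(-\kappa,t)=H(\kappa,t)$. This yields (\ref{Convol}) directly. Alternatively, one may verify (\ref{Convol}) a posteriori by differentiating under the $Z(d\kappa)$-integral: since $H(\kappa,t)$ solves the ordinary differential equation in (\ref{FourierInitValue}) with $H(\kappa,0)=1$ and $\partial_t H(\kappa,0)=0$, while the Laplacian acts on $e^{i\langle\kappa,\bold x\rangle}$ as multiplication by $-\Vert\kappa\Vert^2$, the integrand obeys (\ref{telegraph}) for each $\kappa$ and the conditions (\ref{incond}) are recovered at $t=0$.

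For the covariance (\ref{Covar}) I would start from $\text{Cov}(q(\bold x,t),q(\bold x',t'))=\bold E\big[q(\bold x,t)\,\overline{q(\bold x',t')}\big]$, the zero mean being inherited from $\eta$. Inserting (\ref{Convol}) and using the orthogonal-scattering property of $Z$, informally $\bold E[Z(d\kappa)\,\overline{Z(d\kappa')}]=\delta(\kappa-\kappa')\,F(d\kappa)$, collapses the resulting double spectral integral onto the diagonal and leaves
$$\int_{\Bbb R^3}e^{i\langle\kappa,\bold x-\bold x'\rangle}\,H(\kappa,t)\,\overline{H(\kappa,t')}\,F(d\kappa).$$
The final step uses that $H(\kappa,t)$ is real-valued — immediate from the explicit formula of Theorem~\ref{th5.11}, which involves only real hyperbolic and trigonometric functions — so the conjugation may be dropped to produce (\ref{Covar}).

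I expect the main obstacle to be the rigorous justification of the stochastic Fubini interchange and of differentiation under the $L_2(\Omega)$ integral sign. Both reduce to one integrability estimate: for each fixed $t$ one needs $\int_{\Bbb R^3}|H(\kappa,t)|^2\,F(d\kappa)<\infty$ so that the integrals in (\ref{Convol}) and (\ref{Covar}) define genuine $L_2(\Omega)$ objects. This follows from the uniform bounds of Lemma~\ref{lem4.2}, which give $|H(\kappa,t)|=|\tilde H_1(\Vert\kappa\Vert,t)+\tilde H_2(\Vert\kappa\Vert,t)|\le 1+\exp(-\tfrac{c^2}{2D}t)(1+\tfrac{c^2}{2D}t)$, combined with the finiteness $F(\Bbb R^3)=B(0)<\infty$ of the spectral measure. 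Once this bound is in place, interchanging the deterministic convolution integral with the spectral stochastic integral is legitimate and both displayed identities follow.
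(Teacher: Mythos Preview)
Your proposal is correct and follows essentially the same route as the paper: write $q$ as the spatial convolution of $\eta$ with the Green's function $Q$, insert the spectral representation (\ref{spectrepres}), interchange integrals to produce (\ref{Convol}), verify the $L_2(\Omega)$ integrability via the uniform bounds of Lemma~\ref{lem4.2} and $F(\Bbb R^3)=B(0)<\infty$, and then read off (\ref{Covar}) from the orthogonality of $Z$. You are in fact slightly more careful than the paper at one point: the inner integral after the substitution is $H(-\kappa,t)$ according to the definition (\ref{FourierTrans}), and you correctly invoke Remark~\ref{til} to identify it with $H(\kappa,t)$, whereas the paper makes this identification tacitly.
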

\begin{proof}[Proof of Theorem \ref{th5.2}] 
	Notice that
	\begin{equation*} 
		\aligned 
		q(\bold x,t) & = \int_{\Bbb R^3} \eta(\bold y) \; Q(\bold x - \bold y, t) \; d\bold y 
		= \int_{\Bbb R^3} \eta(\bold x - \bold z) \; Q(\bold z, t) \; d\bold z \\ 
		& = \int_{\Bbb R^3} e^{i\langle  \kappa, \bold x \rangle} \left[ \int_{\Bbb R^3} e^{i\langle  \kappa, -\bold z \rangle} Q(\bold z,t) d\bold z \right] Z(d \kappa) 
		= \int_{\Bbb R^3} e^{i\langle  \kappa, \bold x \rangle} \; H( \kappa,t) \; Z(d \kappa) ,
		\endaligned 
	\end{equation*}
	where $H( \kappa,t)$ is given by (\ref{FourierTransForm}), assuming that the random initial condition has the spectral measure $F$, such that
	\begin{equation}\label{SpectralMeasure}
		\int_{\Bbb R^3} |H( \kappa,t)|^2 \; F(d \kappa) < \infty . 
	\end{equation}
	Under the condition (\ref{SpectralMeasure}), the stochastic integral (\ref{Convol}) exists in the $L_2(\Omega)$-sense.
	
	By Lemma~\ref{lem4.2} the function $|H( \kappa,t)|$ can be bounded by a constant $C(t)$ which depends only on $t$. Noting that 
	$\int_{\Bbb R^3}|H( \kappa,t)|^2F(d \kappa) \leq C(t)B(0) $ we obtain (\ref{SpectralMeasure}). The representation~(\ref{Covar}) immediately follows from $(\ref{Convol})$ and the orthogonality of $Z(\cdot)$.
\end{proof}
\section{Spherical random hyperbolic diffusion}\label{sec5.3}
\numberwithin{equation}{section}
In this section we investigate a restriction of the spatial-temporal hyperbolic diffusion field from Section~2 to the unit sphere.

Consider the sphere $S^2=\{ \bold x\in\Bbb R^3 : \Vert\bold x\Vert = 1 \}$ in the three-dimensional Euclidean space 
$\Bbb R^3$ with the Lebesgue measure 
$$\tilde\sigma(d\bold x) = \sigma(d\theta,d\varphi) = \sin{\theta} \; d\theta \; d\varphi , \quad \theta\in [0,\pi], \; 
\varphi\in [0, 2\pi) .$$

A spatio-temporal spherical random field defined on a probability space $(\Omega, \mathcal F, P)$ is a stochastic function 
$$T(\bold x,t) = T(\bold x,t,\omega) = \tilde T(\theta,\varphi,t), \quad \bold x\in S^2, \; t \geq 0.$$ 
We consider a real-valued spatio-temporal spherical random field $T$ with zero-mean and finite second-order moments and being continuous in the mean-square sense (see, for example, Marinucci and Peccati~\cite{marinucci2011random} for definitions and other details). Under these conditions, the zero-mean random field $T$ can be expanded in the mean-square sense as the Laplace series, see~\cite{yadrenko1983spectral},
\begin{equation*}
	\tilde T(\theta,\varphi,t) = \sum_{l=0}^{\infty} \sum_{m=-l}^l Y_{lm}(\theta,\varphi) \; a_{lm}(t) , 
\end{equation*}
where the functions $Y_{lm}(\theta,\varphi)$ represent the spherical harmonics and the coefficients $a_{lm}(t)$ are given by the formula 
\begin{equation*}
	a_{lm}(t) = \int_0^{\pi} \int_0^{2\pi} \tilde{T}(\theta,\varphi,t) \; Y_{lm}^*(\theta,\varphi) \; \sin{\theta} \; d\theta \; d\varphi .
\end{equation*}
We assume that the field $T$ is isotropic (in the weak sense), i.e. $\bold E T^2(\bold x,t) <\infty$, 
and $\bold E T(\bold x,t) T(\bold y,t') = \bold E T(g\bold x,t) T(g\bold y,t')$ for every $g\in SO(3)$, the group of rotations in $\Bbb R^3$. This is equivalent to the condition that the covariance function $\bold E \tilde T(\theta,\varphi,t) \tilde T(\theta',\varphi',t')$ depends only on the angular distance $\gamma=\gamma_{PQ}$ between the points $P=(\theta,\varphi)$ and $Q=(\theta',\varphi')$ on $S^2$ for every $t, t' \geq 0$. 

The field is isotropic if and only if 
\begin{align}\label{c43}
	\bold E a_{lm}(t) a_{l'm'}(t') = \delta_l^{l'} \delta_m^{m'} C_l(t,t') , \quad -l\le m\le l, \; -l'\le m'\le l'.
\end{align}
Hence, 
$$\bold E a_{lm}(t)a_{lm}(t') = C_l(t,t') , \quad m=0,\pm 1,\dots,\pm l.$$
The functional series $\{ C_l(t,t'),\ l=0,1,\dots \}$ is called the angular time-dependent power spectrum of the isotropic random field $\tilde T(\theta,\varphi,t)$. 

We can define a covariance function between two locations with the angular distance $\gamma$ at times $t$ and $t'$ by
\begin{equation}\label{Gamma}
	\aligned
	R(\cos\gamma, t, t')  = \bold E T(\theta,\varphi,t) T(\theta',\varphi',t')
	= \frac{1}{4\pi} \sum_{l=0}^{\infty} (2l+1) \; C_l(t,t') \; P_l(\cos\gamma) ,
	\endaligned
\end{equation}
where $P_l(x) = \frac{1}{2^l \; l!} \; \frac{d^l}{dx^l} (x^2-1)^l $
is the $l$-th Legendre polynomial. 

If $\tilde T(\theta,\varphi,t)$ is a zero-mean isotropic Gaussian field, then the coefficients $a_{lm}(t), \; m=-l,\dots,l,$ $l\ge$~$1,$ are complex-valued Gaussian stochastic processes with 
$$\bold E a_{lm}(t) = 0, \quad \bold E a_{lm}(t) a_{l'm'}(t') = \delta_l^{l'} \delta_m^{m'} C_l(t,t') .$$

By Remark~\ref{til} the random field $q(\bold x,t),\ \bold x \in \mathbb{R}^3$, given by (\ref{Convol}) is homogeneous and isotropic in $\bold x$ and, hence, its covariance function (\ref{Covar}) can be written in the form: 
$$\aligned 
\text{Cov}(q(\bold x,t),q(\bold x',t')) & = \int_0^{\infty} \frac{\sin(\mu \Vert\bold x-\bold x'\Vert)}{\mu \Vert\bold x-\bold x'\Vert} \; \tilde H(\mu,t) \; \tilde H(\mu,t') \; G(d\mu) \\
& = 2\pi^2 \sum_{l=0}^{\infty} \sum_{m=-l}^l Y_{lm}(\theta,\varphi) \; Y_{lm}^*(\theta',\varphi') \\
& \times \int_0^{\infty} \frac{J_{l+1/2}(\mu r )}{(\mu r) ^{1/2}} \; \frac{J_{l+1/2}(\mu r' )}{(\mu r') ^{1/2}} \; \tilde H(\mu,t) \; \tilde H(\mu,t') \; G(d\mu) ,
\endaligned$$
where $(r,\theta,\varphi)$ and $(r',\theta',\varphi')$ are spherical coordinates of $\bold x $ and $ \bold x'$ respectively.

Using the Karhunen theorem we obtain the following spectral representation of the random field: 
\begin{equation}\label{RandField}
	q(\bold x,t) = \tilde q(r, \theta,\varphi,t) = \pi\sqrt{2} \sum_{l=0}^{\infty} \sum_{m=-l}^l Y_{lm}(\theta,\varphi)  \int_0^{\infty} \frac{J_{l+1/2}(r\mu )}{(r\mu) ^{1/2}} \; \tilde H(\mu,t) \; Z_{lm}(d\mu) , 
\end{equation}
where the random measures $Z_{lm}(\cdot)$ are given in (\ref{Zlm}).

Similarly to the condition (\ref{SpectralMeasure}) the isotropic measure $G(\cdot)$ satisfies the following condition 
\begin{equation*}
	\int_0^{\infty} \mu^2 \; |\tilde H(\mu,t)|^2 \; G(d\mu) < \infty 
\end{equation*}
if the field has a finite variance.

Subclasses of covariance functions of the isotropic fields on the sphere can be obtained from covariance functions of homogeneous isotropic random fields in Euclidean space, since a restriction of the homogeneous and isotropic random field to the sphere yields an isotropic spherical field, see, for example, \cite{yadrenko1983spectral}, p. 76.

Consider two locations $\bold x$ and $\bold x'$ on the unit sphere $S^2$ with the angle $\gamma\in [0,\pi]$ between them. Then the Euclidean distance between these two points is $2\sin{\frac{\gamma}{2}}$, the inner product is $\langle \bold x, \bold x' \rangle = \cos{\gamma}$, which gives a direct correspondence between the covariance function $R
_0(\Vert\bold x - \bold x'\Vert,t,t')$ in the Euclidean space and the covariance function  $R(\cos{\gamma},t,t') = R_0(2\sin{\frac{\gamma}{2}},t,t')$ on the sphere for every fixed $t,t'\geq0$. Thus, the restriction of the homogeneous and isotropic hyperbolic diffusion field (\ref{RandField}) to $S^2$ is an isotropic spherical random field for every fixed $t,t'\geq 0$. We will call it the spherical hyperbolic diffusion isotropic random field $T_H(\bold x,t), \; \bold x\in S^2, \; t\geq0$. 

Its covariance function is of the form: 
\begin{equation}\label{CovFunc}
	\text{Cov}(T_H(\bold x,t),T_H(\bold x',t')) = R(\cos{\gamma},t,t') = \int_0^{\infty} \frac{\sin(2\mu \sin{\frac{\gamma}{2}})}{2\mu \sin{\frac{\gamma}{2}}} \; \tilde H(\mu,t) \; \tilde H(\mu,t') \; G(d\mu).
\end{equation}
By the addition theorem for Bessel functions, the random field $T_H(\bold x,t)=\tilde T_H(\theta,\varphi,t)$ has the following spectral representation

\begin{equation}\label{SpectrRepres1}
	\tilde T_H(\theta,\varphi,t) =  \sum_{l=0}^{\infty} \sum_{m=-l}^l Y_{lm}(\theta,\varphi) \; a_{lm}(t) , 
\end{equation}
where 
\begin{equation}\label{Alm1}
	a_{lm}(t) = \pi\sqrt{2}\int_0^{\infty} \frac{J_{l+1/2}(\mu)}{\sqrt{\mu}} \; \tilde H(\mu,t) \; Z_{lm}(d\mu) 
\end{equation}
and the random measure $Z_{lm}(\cdot)$ satisfies (\ref{Zlm}).

Thus, the angular spectrum of the isotropic spherical random field $T_H(\bold x,t)$ is given by the formula
\begin{equation}\label{AngularSpectrum}
	C_l(t,t') = 2\pi^2 \int_0^{\infty} \frac{J_{l+1/2}^2(\mu)}{\mu} \; \tilde H(\mu,t) \; 
	\tilde H(\mu,t') \; G(d\mu) .
\end{equation}
Therefore, we obtained the following result. 

\begin{Theorem}\label{th3.2}
	{\it Consider the random initial-value problem} {\rm(\ref{telegraph})}-{\rm(\ref{incond})}, {\it in which $\eta(\bold x), \; \bold x\in\Bbb R^3$, is a homogeneous isotropic random field with the isotropic spectral measure $G(\cdot)$ given by}~{\rm(\ref{Zlm})}.

	Then, the restriction of the spatio-temporal hyperbolic-diffusion random field {\rm(\ref{RandField})} {\it to the sphere $S^2$ is an isotropic spatio-temporal spherical random field with the following angular spectrum}
	$$\aligned 
	C_l(t,t') & = 2\pi^2 \biggl[ \int_0^{\frac{c}{2D}} \frac{J_{l+1/2}^2(\mu)}{\mu} \; \tilde H_1(\mu,t) \; 
	\tilde H_1(\mu,t') \; G(d\mu) \\
	& \qquad\quad + \int_{\frac{c}{2D}}^{\infty} \frac{J_{l+1/2}^2(\mu)}{\mu} \; \tilde H_2(\mu,t) \; \tilde H_2(\mu,t') \; G(d\mu) \biggr].
	\endaligned$$
	{\it The field and its covariance functions are given by} {\rm(\ref{SpectrRepres1})} {\it and} {\rm(\ref{CovFunc})} 
	{\it respectively.}
\end{Theorem}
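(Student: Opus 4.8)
The plan is to assemble the statement from the spectral machinery already in place, since the field expansion and covariance structure on $\mathbb{R}^3$ have been established. First I would take the spectral representation (\ref{RandField}) of the spatio-temporal field $\tilde q(r,\theta,\varphi,t)$ and restrict it to the unit sphere by setting $r=1$. This immediately yields the Laplace-series form (\ref{SpectrRepres1}) with coefficient processes $a_{lm}(t)$ given by (\ref{Alm1}), because the only $r$-dependence sits in the factor $J_{l+1/2}(r\mu)/(r\mu)^{1/2}$, which becomes $J_{l+1/2}(\mu)/\mu^{1/2}$ at $r=1$.

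Next I would verify that the restricted field is isotropic on $S^2$. The cleanest route is to invoke the cited fact (Yadrenko, p.~76) that the restriction of a homogeneous and isotropic random field on $\mathbb{R}^3$ to a sphere of fixed radius is isotropic, together with the explicit covariance (\ref{CovFunc}): for two points at angular distance $\gamma$ the Euclidean distance equals $2\sin(\gamma/2)$, so the covariance depends only on $\gamma$ (and on $t,t'$), which is exactly the weak-isotropy condition stated just before (\ref{c43}).

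The substantive computation is the angular power spectrum. By the characterisation (\ref{c43}), $C_l(t,t')=\bold E\, a_{lm}(t)\, a_{lm}(t')$ for any admissible $m$. Substituting (\ref{Alm1}) and applying the orthogonality relation (\ref{Zlm}) of the random measures $Z_{lm}(\cdot)$ collapses the double sum and produces the single integral (\ref{AngularSpectrum}), namely $C_l(t,t')=2\pi^2\int_0^\infty \mu^{-1}J_{l+1/2}^2(\mu)\,\tilde H(\mu,t)\,\tilde H(\mu,t')\,G(d\mu)$. Here one must keep track of the convention in (\ref{Zlm}), where the measures appear without complex conjugation, so that the product of the two Bessel factors gives $J_{l+1/2}^2(\mu)/\mu$ directly and $C_l$ comes out real.

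Finally I would split this integral into the two claimed pieces. By the defining formulas (\ref{ppp}) and (\ref{pprime}), $\tilde H_1(\mu,\cdot)$ is supported on $\{|\mu|\le c/2D\}$ and $\tilde H_2(\mu,\cdot)$ on $\{|\mu|>c/2D\}$, with $\tilde H=\tilde H_1+\tilde H_2$. Because the supports are disjoint, the cross terms $\tilde H_1(\mu,t)\tilde H_2(\mu,t')$ and $\tilde H_2(\mu,t)\tilde H_1(\mu,t')$ vanish pointwise, so $\tilde H(\mu,t)\tilde H(\mu,t')=\tilde H_1(\mu,t)\tilde H_1(\mu,t')+\tilde H_2(\mu,t)\tilde H_2(\mu,t')$, and the integral separates over $[0,c/2D]$ and $(c/2D,\infty)$ exactly as stated. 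I do not expect a genuine obstacle: the only points needing care are the bookkeeping of the conjugation convention in the orthogonality relation and the integrability (already guaranteed through Lemma~\ref{lem4.2} and the finite-variance condition) that legitimises each interchange of summation, integration and expectation.
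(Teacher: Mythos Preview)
Your proposal is correct and follows essentially the same route as the paper: the paper derives (\ref{SpectrRepres1}), (\ref{Alm1}), (\ref{CovFunc}) and (\ref{AngularSpectrum}) in the discussion immediately preceding the theorem via restriction of (\ref{RandField}) to $r=1$, the Yadrenko restriction-to-sphere argument, and the orthogonality (\ref{Zlm}), and then states the theorem as a direct consequence. The only cosmetic addition in your write-up is the explicit remark that the disjoint supports of $\tilde H_1$ and $\tilde H_2$ kill the cross terms, which the paper leaves implicit.
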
 

Notice that $T_H(\bold x,0)=\eta(\bold x),\ \bold x \in S^2$. The angular power spectrum of $\eta(\bold x)$, $\bold x \in S^2$, will be denoted by $C_l$, $l=0,1,\dots$ For spherical random fields with finite variances it holds 
\begin{align}\label{3.star}
	\sum_{l=0}^{\infty}(2l+1)C_l<\infty.
\end{align}
\begin{Lemma}\label{lem3.1}
	If {\rm(\ref{3.star})} holds true, then 
	$$\sum_{l=0}^{\infty}(2l+1)C_l(t,t')<\infty.$$
\end{Lemma}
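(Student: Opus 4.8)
The plan is to dominate $C_l(t,t')$ termwise by an $l$-independent constant times $C_l$ and then invoke the assumption~(\ref{3.star}). The starting observation is that $C_l$ is precisely $C_l(0,0)$. Indeed, evaluating~(\ref{ppp}) and~(\ref{pprime}) at $t=0$ gives $\tilde H_1(\mu,0)=\Bbb I_{\{|\mu|\le c/2D\}}$ and $\tilde H_2(\mu,0)=\Bbb I_{\{|\mu|>c/2D\}}$, so $\tilde H(\mu,0)\equiv 1$, and~(\ref{AngularSpectrum}) yields
$$C_l = C_l(0,0) = 2\pi^2\int_0^\infty \frac{J_{l+1/2}^2(\mu)}{\mu}\,G(d\mu).$$

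Next I would use the decomposition of $C_l(t,t')$ from Theorem~\ref{th3.2} together with the two-sided estimates of Lemma~\ref{lem4.2}. On the low-frequency range $[0,c/2D]$ the bound $0\le \tilde H_1(\mu,t)\le 1$ gives $|\tilde H_1(\mu,t)\tilde H_1(\mu,t')|\le 1$, while on the high-frequency range $[c/2D,\infty)$ the estimate~(\ref{EQ4}) gives $|\tilde H_2(\mu,t)\tilde H_2(\mu,t')|\le M(t)M(t')$, where $M(t):=\exp(-\tfrac{c^2}{2D}t)\bigl(1+\tfrac{c^2}{2D}t\bigr)$ is independent of $\mu$. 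Setting $\tilde M(t,t'):=\max\{1,\,M(t)M(t')\}$ and using that $G$ is a non-negative measure and $J_{l+1/2}^2(\mu)/\mu\ge 0$, the triangle inequality applied to the integral representation of Theorem~\ref{th3.2} gives
$$|C_l(t,t')|\le 2\pi^2\,\tilde M(t,t')\int_0^\infty \frac{J_{l+1/2}^2(\mu)}{\mu}\,G(d\mu)=\tilde M(t,t')\,C_l.$$

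Finally, multiplying by $(2l+1)$ and summing over $l$, I would conclude
$$\sum_{l=0}^\infty (2l+1)\,|C_l(t,t')|\le \tilde M(t,t')\sum_{l=0}^\infty (2l+1)\,C_l<\infty$$
by~(\ref{3.star}), which is the assertion. The only point requiring care is that the estimates of Lemma~\ref{lem4.2} are uniform in the spectral variable $\mu$, depending on $t$ alone; this is exactly what permits the constant $\tilde M(t,t')$ to be pulled out of the integral and compared directly with $C_l$. Beyond this observation there is no genuine analytic obstacle, so I expect the verification to be short once the identity $C_l=C_l(0,0)$ and the uniformity of the bounds are in place.
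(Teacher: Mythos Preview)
Your proposal is correct and follows essentially the same route as the paper: split $C_l(t,t')$ via Theorem~\ref{th3.2}, apply the uniform-in-$\mu$ bounds of Lemma~\ref{lem4.2} on the two frequency ranges, pull the resulting constants outside the integral, and compare with the integral representation of $C_l=C_l(0,0)$. The only cosmetic difference is that the paper further observes $\sup_{x\ge 0}(1+x)e^{-x}=1$, so your $M(t)\le 1$ and hence $\tilde M(t,t')=1$, giving the sharper inequality $\sum_{l}(2l+1)C_l(t,t')\le \sum_{l}(2l+1)C_l$.
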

\begin{proof}[Proof of Lemma \ref{lem3.1}]
	By Theorem~\ref{th3.2}
	\begin{align}\label{tilda1}
		\sum_{l=0}^{\infty}(2l+1)C_l(t,t')&=2\pi^2\sum_{l=0}^{\infty}(2l+1)\int_{0}^{c/2D}\frac{J_{l+\frac{1}{2}}^2(\mu)}{\mu}\tilde H_1(\mu,t)\tilde H_1(\mu,t')G(d\mu)\notag\\
		&+2\pi^2\sum_{l=0}^{\infty}(2l+1)\int_{c/2D}^{\infty}\dfrac{J_{l+\frac{1}{2}}^2(\mu)}{\mu}\tilde H_2(\mu,t)\tilde H_2(\mu,t')G(d\mu)\notag\\	
		&\leq 2\pi^2 \cdot
		\sup_{\mu< \frac{c}{2D}}\big\vert \tilde H_1(\mu,t)\tilde H_1(\mu,t')\big\vert \cdot
		\sum_{l=0}^{\infty}(2l+1)\int_{0}^{c/2D}J_{l+\frac{1}{2}}^2(\mu)G(d\mu)\notag\\
		&+2\pi^2 \cdot \sup_{\mu\geq \frac{c}{2D}}\big\vert \tilde H_2(\mu,t)\tilde H_2(\mu,t')\big\vert\cdot
		\sum_{l=0}^{\infty}(2l+1)\int_{c/2D}^{\infty}
		\frac{J_{l+\frac{1}{2}}^2(\mu)}{\mu}G(d\mu).
	\end{align}
	Now, combining (\ref{tilda1}) and Lemma~\ref{lem4.2} one gets
	
	\begin{align}\label{tilda3}
		\sum_{l=0}^{\infty}(2l+1)C_l(t,t')&\leq 2\pi^2 \int_{0}^{c/2D}\sum_{l=0}^{\infty}(2l+1)\frac{J_{l+\frac{1}{2}}^2(\mu)}{\mu}G(d\mu)+\exp\bigg(-\dfrac{c^2}{D}t\bigg) \bigg(1+\dfrac{c^2}{2D}t\bigg)^2\notag\\
		& \times 2\pi^2
		\int_{c/2D}^{\infty}\sum_{l=0}^{\infty}(2l+1)\frac{J_{l+\frac{1}{2}}^2(\mu)}{\mu}G(d\mu) \leq  \sum_{l=0}^{\infty}(2l+1)C_l,
	\end{align} 
	as $\sup_{x\geq 0}(x+1)e^{-x}= 1$, $\tilde H_1(\mu,0)=\tilde H_2(\mu,0)=1$, and $C_l(0,0)=C_l$. It completes the proof.
\end{proof}
\begin{Remark}
	It follows from Lemma~{\rm\ref{lem3.1}} and the estimate $\vert P_l(\cos\theta)\vert\leq 1$ that the solution's covariance function given by~{\rm (\ref{Gamma})} is finite if the initial condition $\eta(\bold x),\ x \in S^2$, has a finite variance.
\end{Remark}

\section{Smoothness of solutions}\label{sec5.8}
In this section, we investigate the H\"{o}lder-type continuity of the solution $\tilde{T}(\theta,\varphi,t)$ on the sphere.
We demonstrate how it depends on the decay of the angular power spectrum and provide some specifications in terms of the spectral measure $G(\cdot)$. 

First, we obtain continuity of the solution with respect to the geodesic distance on the sphere. To prove it we use the approach from Corollary~5 in~\cite{broadbridge2019random}. 
\begin{Theorem}\label{th5.4}
	Let $\tilde{T}_H(\theta,\varphi,t)$ be the solution of the initial value problem {\rm(\ref{telegraph})}-{\rm(\ref{incond})} and the random initial condition $\eta(\bold x)$, $\bold x \in S^2$, has the angular power spectrum $\{C_l,l=0,1,2,\dots   \}$ satisfying the assumption
	\begin{align}\label{sum2l2}
		\sum_{l=0}^{\infty}(2l+1)^{1+2\alpha}C_l<\infty, \ \alpha \in (0,1].
	\end{align}
	\begin{itemize}
		\item[{\rm(a)}] Then, for $t>0$ $$MSE\big( \tilde{T}_H(\theta,\varphi,t)-\tilde{T}_H(\theta',\varphi',t)   \big)\leq C \sum_{l=0}^{\infty}(2l+1)^{1+2\alpha}C_l(1-\cos \gamma)^\alpha,$$
		where $\gamma$ is the angle between directions $(\theta,\varphi)$ and $(\theta',\varphi')$.
		\item[{\rm(b)}]If the measure $G(\cdot)$ has its support in $\big[\frac{c}{2D},\infty\big)$, then
		$$MSE\big( \tilde{T}_H(\theta,\varphi,t)-\tilde{T}_H(\theta',\varphi',t)   \big)\leq C \exp \bigg(-\frac{c^2}{D}t\bigg)
		\bigg(1+\frac{c^2}{2D}t\bigg)^2
		\sum_{l=0}^{\infty}(2l+1)^{1+2\alpha}C_l(1-\cos \gamma)^\alpha.$$
	\end{itemize}
\end{Theorem}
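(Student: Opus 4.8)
The plan is to reduce the mean-square increment to a Legendre series in the angular power spectrum and then to control the Legendre polynomials through their behaviour near $x=1$. Since $\tilde T_H(\theta,\varphi,t)$ is real-valued, zero-mean and isotropic, expanding the square and applying the covariance representation~(\ref{Gamma}) with $t'=t$ gives
\begin{align*}
	MSE\big(\tilde T_H(\theta,\varphi,t)-\tilde T_H(\theta',\varphi',t)\big)
	&= 2\big[R(1,t,t)-R(\cos\gamma,t,t)\big]\\
	&= \frac{1}{2\pi}\sum_{l=0}^{\infty}(2l+1)\,C_l(t,t)\,\big[1-P_l(\cos\gamma)\big],
\end{align*}
where I used $P_l(1)=1$. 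This isolates the two ingredients to be estimated: the coefficients $C_l(t,t)$ and the factors $1-P_l(\cos\gamma)$.

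The heart of the argument, and the step I expect to be the main obstacle, is a bound on $1-P_l(\cos\gamma)$ that is simultaneously uniform in $l$ and of H\"{o}lder order $\alpha$ in $1-\cos\gamma$. I would combine the trivial estimate $0\le 1-P_l(\cos\gamma)\le 2$ with the sharper linear estimate
$$1-P_l(\cos\gamma)\le \frac{l(l+1)}{2}\,(1-\cos\gamma),$$
which rests on $P_l(1)=1$, $P_l'(1)=l(l+1)/2$ and the behaviour of $P_l$ near $x=1$ (one checks that $P_l(x)-1+\frac{l(l+1)}{2}(1-x)\ge 0$ on $[-1,1]$). Interpolating between the two estimates by writing $1-P_l=(1-P_l)^{1-\alpha}(1-P_l)^{\alpha}$ then yields, for $\alpha\in(0,1]$,
$$1-P_l(\cos\gamma)\le C\,(l(l+1))^{\alpha}\,(1-\cos\gamma)^{\alpha}\le C\,(2l+1)^{2\alpha}\,(1-\cos\gamma)^{\alpha},$$
using $l(l+1)\le (2l+1)^2$; the delicate point is to verify that the linear bound holds for every $l$ (not merely asymptotically as $\gamma\to 0$) and that the interpolation constant $C$ does not depend on $l$.

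It remains to bound $C_l(t,t)$ in terms of $C_l$. From the angular-spectrum formula of Theorem~\ref{th3.2} with $t'=t$, Lemma~\ref{lem4.2} gives $\tilde H_1^2(\mu,t)\le 1$ and $\tilde H_2^2(\mu,t)\le \exp(-\frac{c^2}{D}t)(1+\frac{c^2}{2D}t)^2\le 1$, the final inequality because $\sup_{x\ge 0}(x+1)e^{-x}=1$. Since $C_l=2\pi^2\int_0^{\infty}\mu^{-1}J_{l+1/2}^2(\mu)\,G(d\mu)$ corresponds to $\tilde H_1\equiv\tilde H_2\equiv 1$, this delivers the term-wise bound $C_l(t,t)\le C_l$, exactly as in the proof of Lemma~\ref{lem3.1}; substituting it into the first display, summing, and invoking assumption~(\ref{sum2l2}) to guarantee convergence proves part~(a). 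For part~(b), the support condition on $G(\cdot)$ annihilates the $\tilde H_1$-integral, so only the $\tilde H_2$-factor survives and the sharper bound $C_l(t,t)\le \exp(-\frac{c^2}{D}t)(1+\frac{c^2}{2D}t)^2\,C_l$ holds; inserting this produces the additional decay factor and completes part~(b).
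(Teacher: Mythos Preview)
Your proof is correct and follows essentially the same route as the paper: expand the MSE via the Legendre series \(\frac{1}{2\pi}\sum_l(2l+1)C_l(t,t)\bigl(1-P_l(\cos\gamma)\bigr)\), bound \(C_l(t,t)\le C_l\) (respectively \(C_l(t,t)\le e^{-c^2t/D}(1+\tfrac{c^2}{2D}t)^2 C_l\) in case~(b)) using Lemma~\ref{lem4.2}, and control \(1-P_l(\cos\gamma)\) by \(C\,(l(l+1))^\alpha(1-\cos\gamma)^\alpha\). The only difference is presentational: the paper simply quotes the Legendre estimate from Lang--Schwab~\cite{lang2015isotropic}, whereas you sketch its derivation by interpolating between the trivial bound \(1-P_l\le 2\) and the linear bound \(1-P_l\le \tfrac{l(l+1)}{2}(1-\cos\gamma)\).
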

\begin{proof}[Proof of Theorem \ref{th5.4}]
	(a) It follows from~(\ref{c43}), (\ref{Gamma}), (\ref{SpectrRepres1}) and (\ref{tilda3}) that 
	\begin{align*}
		MSE\big( \tilde{T}_H(\theta,\varphi,t)-\tilde{T}_H(\theta',\varphi',t)\big)&=2Var(\tilde{T}_H(\theta,\varphi,t))-2\text{Cov}(\tilde{T}_H(\theta,\varphi,t)\tilde{T}_H(\theta',\varphi',t))\\
		&=\frac{1}{2\pi}\sum_{l=0}^{\infty}(2l+1)C_l(t,t)(1-P_l(\cos \gamma))\\
		&\leq \frac{1}{2\pi}\sum_{l=0}^{\infty}(2l+1)C_l
		(1-P_l(\cos \gamma)).
	\end{align*}
	Applying the property of Legendre polynomials,~\cite{lang2015isotropic}, p.16, 
	$$|1-P_l(\cos \gamma)|\leq 2(1-\cos \gamma)^\alpha(l(l+1))^\alpha, \ \alpha\in (0,1],$$
	one obtains the statement (a) of the theorem.
	
	(b) It follows from the proof of~(\ref{tilda3}) that in the case of $G([0,\frac{c}{2D}])=0$ it holds
	$$C_l(t,t)\leq \exp \bigg(-\frac{c^2}{D}t\bigg) \bigg(1+\frac{c^2}{2D}t   \bigg)^2 C_l .$$
	The remaining steps are similar to the proof in~(a).
\end{proof}
\begin{Theorem}\label{th55.4}
	If the measure $G(\cdot)$ has a bounded support $[0,\delta],\ \delta>0$, then
	\begin{align}\label{mse}
		MSE\big( \tilde{T}_H(\theta,\varphi,t)-\tilde{T}_H(\theta',\varphi',t)\big)\leq C (1-\cos \gamma),\quad {\rm when} \quad \gamma \to 0+,
	\end{align}
	even for the case of $\alpha=0$ in~{\rm(\ref{sum2l2})}.
\end{Theorem}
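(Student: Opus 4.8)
The plan is to work directly with the integral representation~(\ref{CovFunc}) of the covariance rather than with the Legendre-series form exploited in Theorem~\ref{th5.4}; the bounded support of $G$ is exactly what controls the high-frequency content that the series approach would otherwise need assumption~(\ref{sum2l2}) with $\alpha=1$ to tame. First I would rewrite the mean-square increment, using $MSE=2\,\mathrm{Var}-2\,\mathrm{Cov}=2\big(R(1,t,t)-R(\cos\gamma,t,t)\big)$ together with~(\ref{CovFunc}) at $t'=t$, as
$$MSE\big(\tilde{T}_H(\theta,\varphi,t)-\tilde{T}_H(\theta',\varphi',t)\big)=2\int_0^{\delta}\Big(1-\frac{\sin\big(2\mu\sin\frac{\gamma}{2}\big)}{2\mu\sin\frac{\gamma}{2}}\Big)\,\tilde H^2(\mu,t)\,G(d\mu),$$
where the integration runs over $[0,\delta]$ precisely because $G$ is supported there.

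Next I would estimate the integrand pointwise via the elementary inequality $1-\frac{\sin x}{x}\le\frac{x^2}{6}$, valid for all $x\ge0$ (it follows from $\sin x\ge x-\frac{x^3}{6}$, which one verifies by differentiating twice). Substituting $x=2\mu\sin\frac{\gamma}{2}$ and using $\sin^2\frac{\gamma}{2}=\frac{1-\cos\gamma}{2}$ gives
$$1-\frac{\sin\big(2\mu\sin\frac{\gamma}{2}\big)}{2\mu\sin\frac{\gamma}{2}}\le\frac{\big(2\mu\sin\frac{\gamma}{2}\big)^2}{6}=\frac{\mu^2(1-\cos\gamma)}{3},$$
which pulls the factor $(1-\cos\gamma)$ out of the integral.

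Finally I would bound the residual integral $\int_0^{\delta}\mu^2\,\tilde H^2(\mu,t)\,G(d\mu)$. On $[0,\delta]$ one has $\mu^2\le\delta^2$, while Lemma~\ref{lem4.2} yields a uniform bound $|\tilde H(\mu,t)|\le C(t)$; since finite variance means $G([0,\delta])=B(0)<\infty$, the integral is at most $\delta^2 C^2(t) B(0)$. Collecting constants gives $MSE\le C(1-\cos\gamma)$ with $C$ independent of $\gamma$, so the claimed estimate holds, in particular as $\gamma\to0+$.

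The one point to appreciate, rather than a real obstacle, is that the extra power of $(1-\cos\gamma)$ comes from compact support alone and not from any decay of $\{C_l\}$: bounded support forces $\int_0^{\delta}\mu^2\,G(d\mu)<\infty$ for free, which in the Laplace-series picture corresponds to $\sum_{l}(2l+1)^3 C_l(t,t)<\infty$ and hence to the $\alpha=1$ regime of Theorem~\ref{th5.4}. All remaining steps are routine estimates, so I do not anticipate a substantive difficulty.
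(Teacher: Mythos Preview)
Your proposal is correct and follows essentially the same route as the paper: both arguments rewrite the MSE via the integral representation~(\ref{CovFunc}), bound $1-\frac{\sin x}{x}\le \frac{x^2}{6}$, and then use the bounded support of $G$ together with Lemma~\ref{lem4.2} to control the remaining integral. Your version is in fact slightly cleaner, since you state the sinc inequality globally for $x\ge 0$ (so the bound holds for all $\gamma$, not just asymptotically), whereas the paper phrases it via the Taylor remainder as $\gamma\to 0+$.
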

\begin{proof}[Proof of Theorem \ref{th55.4}]
	Indeed, by~{\rm(\ref{CovFunc})} we get
	\begin{align*}
		MSE\big( \tilde{T}_H(\theta,\varphi,t)-\tilde{T}_H(\theta',\varphi',t)\big)&=2\int_0^{\infty} \bigg(1-  \frac{\sin(2\mu \sin{\frac{\gamma}{2}})}{2\mu \sin{\frac{\gamma}{2}}}   \bigg)
		\;  H^2(\mu,t) \; G(d\mu)\\
		&=2\int_0^{\delta} \bigg(1-  \frac{\sin(2\mu \sin{\frac{\gamma}{2}})}{2\mu \sin{\frac{\gamma}{2}}}   \bigg)
		\;  H^2(\mu,t) \; G(d\mu).
	\end{align*}
	For $\mu \in [0,\delta]$ it holds $2\mu \sin \frac{\gamma}{2}\to 0$, when $\gamma \to 0_+$, and therefore 
	\begin{align*}
		\bigg|1-\frac{\sin(2\mu \sin{\frac{\gamma}{2}})}{2\mu \sin{\frac{\gamma}{2}}}   \bigg|=\bigg| \sum_{k=1}^{\infty} \frac{(-1)^k}{(2k+1)!} \bigg(2\mu \sin \dfrac{\gamma}{2}\bigg)^{2k+1}    \bigg| \leq \frac{\bigg(2\mu \sin \dfrac{\gamma}{2}\bigg)^{2}}{3!}.
	\end{align*}
	Hence, 
	$$MSE\big( \tilde{T}_H(\theta,\varphi,t)-\tilde{T}_H(\theta',\varphi',t)\big) \leq C \sin^2 \frac{\gamma}{2} \int_{0}^{\delta} \mu^2 H^2(\mu,t)G(d\mu)  $$
	and~{\rm(\ref{mse})} follows from Lemma~\ref{lem4.2}.
\end{proof}
The next result gives sufficient conditions to guarantee~(\ref{sum2l2}).
\begin{Theorem}\label{th5.5}
	Suppose that $\int_{0}^{\infty}e^{\mu^2/4}G(d\mu)<\infty$. Then {\rm(\ref{sum2l2})} holds true.
\end{Theorem}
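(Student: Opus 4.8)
The plan is to reduce the statement to the angular power spectrum of the initial field, $C_l=C_l(0,0)$, and then exploit the rapid decay of $J_{l+1/2}$ in its order. Setting $t=t'=0$ in the angular-spectrum formula~(\ref{AngularSpectrum}) and using $\tilde H(\mu,0)=1$ (already noted in the proof of Lemma~\ref{lem3.1}), I would start from
\[
C_l = 2\pi^2 \int_0^\infty \frac{J_{l+1/2}^2(\mu)}{\mu}\, G(d\mu).
\]
The assumption $\int_0^\infty e^{\mu^2/4}\,G(d\mu)<\infty$ signals that the proof should compare the $\mu$-dependence of the integrand with the Taylor coefficients of $e^{\mu^2/4}$.

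The first step is a pointwise bound on the integrand. From the Poisson integral representation of $J_\nu$ quoted in Section~\ref{sec5.2}, estimating $|\cos(\mu t)|\le 1$ and evaluating the resulting Beta integral $\int_0^1(1-t^2)^{\nu-1/2}\,dt=\tfrac{\sqrt\pi\,\Gamma(\nu+1/2)}{2\,\Gamma(\nu+1)}$, one recovers the classical inequality $|J_\nu(\mu)|\le (\mu/2)^\nu/\Gamma(\nu+1)$. Taking $\nu=l+1/2$ yields $J_{l+1/2}^2(\mu)/\mu \le \mu^{2l}/\big(2\cdot 4^l\,\Gamma(l+3/2)^2\big)$, hence
\[
C_l \le \frac{\pi^2}{4^l\,\Gamma(l+3/2)^2}\int_0^\infty \mu^{2l}\, G(d\mu).
\]
Next I would multiply by $(2l+1)^{1+2\alpha}$, sum over $l$, and interchange summation and integration (justified by Tonelli's theorem, as all terms are non-negative), reducing the claim to the pointwise series estimate
\[
\sum_{l=0}^\infty \frac{(2l+1)^{1+2\alpha}}{4^l\,\Gamma(l+3/2)^2}\,\mu^{2l} \le C\, e^{\mu^2/4}.
\]
Comparing this term by term with $e^{\mu^2/4}=\sum_{l\ge 0} \mu^{2l}/(4^l\,l!)$, the estimate follows once the ratio $(2l+1)^{1+2\alpha}\,l!/\Gamma(l+3/2)^2$ is shown to be bounded uniformly in $l$. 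Substituting back and invoking the hypothesis then gives $\sum_l (2l+1)^{1+2\alpha}C_l<\infty$, which is exactly~(\ref{sum2l2}).

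The main obstacle is precisely this uniform boundedness of $(2l+1)^{1+2\alpha}\,l!/\Gamma(l+3/2)^2$. Using the Legendre duplication formula in the form $\Gamma(l+3/2)^2=\pi\,((2l+1)!)^2/\big(2^{4l+2}(l!)^2\big)$ together with Stirling's asymptotics (equivalently the central-binomial estimate $(2l)!\sim 4^l(l!)^2/\sqrt{\pi l}$), one finds $l!/\Gamma(l+3/2)^2 \sim 4l/\big((2l+1)^2\,l!\big)$, so the ratio behaves like $4l\,(2l+1)^{2\alpha-1}/l!$. Since $\alpha\in(0,1]$ forces $2\alpha-1\le 1$, the numerator grows only polynomially while $l!\to\infty$, so the ratio tends to $0$ and is a fortiori bounded. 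This is the single point where the restriction $\alpha\le 1$ enters, and it explains why the exponential weight $e^{\mu^2/4}$ is the natural sufficient condition: the Gaussian decay of $G$ precisely matches the super-exponential growth of $\Gamma(l+3/2)^2$ that controls the Bessel coefficients.
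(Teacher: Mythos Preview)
Your proof is correct and follows essentially the same route as the paper: bound $J_{l+1/2}^2(\mu)$ via the Poisson integral, compare the resulting series termwise with $e^{\mu^2/4}=\sum_l(\mu^2/4)^l/l!$, and use $\alpha\le 1$ to conclude that the coefficient ratio is bounded. The only cosmetic difference is that the paper uses the slightly weaker denominator $\Gamma(l+1)^2=(l!)^2$ in place of your $\Gamma(l+3/2)^2$, which makes the ratio $(2l+1)^{1+2\alpha}/l!$ and its boundedness immediate without Stirling or the duplication formula.
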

\begin{proof}[Proof of Theorem \ref{th5.5}]
	By the Poisson integral representation of the Bessel function it follows
	\begin{align*}
		\sum_{l=0}^{\infty}(2l+1)^{1+2\alpha}C_l&=2\pi^2\int_{0}^{\infty}\sum_{l=0}^{\infty}(2l+1)^{1+2\alpha}J_{l+\frac{1}{2}}^2(\mu)
		\frac{G(d\mu)}{\mu}\\
		&\leq C \int_{0}^{\infty}\sum_{l=0}^{\infty}(2l+1)^{1+2\alpha} \frac{\mu^{2l+1}}{2^{2l+1}\Gamma^2(l+1)}\frac{G(d\mu)}{\mu}    \\
		&\leq C \int_{0}^{\infty} \mu \sum_{l=0}^{\infty} \frac{(\mu^2/4)^l}{l!} \frac{(2l+1)^{1+2\alpha}}{l!}\frac{G(d\mu)}{\mu}\leq C \int_{0}^{\infty}e^{\frac{\mu^2}{4}}G(d\mu),	
	\end{align*}
	as $1+2\alpha \leq 3$.
\end{proof}
\section{Short and long memory}\label{sec5.4}
\numberwithin{equation}{section}
In this section we use the representation (\ref{CovFunc}) of covariance functions to investigate the structure of dependences of $T_H(\bold x,t)$ over time. We demonstrate that conditional on the spectral isotropic measure $G(\cdot)$ of the initial random condition $\eta(\bold x),\ \bold x\in\Bbb R^3$, the random field $T_H(\bold x,t)$ can exhibit short or long-range dependence.

The random field $T_H(\bold x,t)$ will be called short-range dependent if
\begin{equation}\label{SHORT}
	\int_0^{+\infty}|R(\cos{\gamma},t+h,t)|dh<+\infty
\end{equation}
for all $t \geq 0$ and $\gamma\in[0,\pi]$.
If the integral in (\ref{SHORT}) is divergent, the field is called long-range dependent.

Results that link behaviours of covariance functions at infinity and spectral measures at the origin are called Abelian-Tauberian theorems. A very detailed overview of such results for random fields can be found in~\cite{leonenko2013tauberian}.

First we investigate the case of $\bold x=\bold x'$ in (\ref{CovFunc}), i.e. the behaviour of $R(1,t+h,t)$.
\begin{Theorem}\label{th3.3}
	{\it For} $\bold x=\bold x'$ {\it the random field} $T_H(\bold x,t)$ {\it exhibits short-range dependence if and only if} $\mu^{-2}G(d\mu)$ {\it is integrable in a neighbourhood of zero}.
\end{Theorem}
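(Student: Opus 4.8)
The plan is to specialise the covariance representation (\ref{CovFunc}) to $\bold x = \bold x'$, i.e.\ $\gamma=0$, where $\sin(2\mu\sin(\gamma/2))/(2\mu\sin(\gamma/2))\to 1$, so that
$$R(1,t+h,t)=\int_0^\infty \tilde H(\mu,t+h)\,\tilde H(\mu,t)\,G(d\mu).$$
Since $\tilde H_1$ and $\tilde H_2$ of (\ref{ppp})--(\ref{pprime}) have disjoint supports $[0,c/2D]$ and $(c/2D,\infty)$, I would decompose $R(1,t+h,t)=R_1(h)+R_2(h)$, where $R_1(h)$ integrates $\tilde H_1(\mu,t+h)\tilde H_1(\mu,t)$ over $[0,c/2D]$ and $R_2(h)$ integrates $\tilde H_2(\mu,t+h)\tilde H_2(\mu,t)$ over $(c/2D,\infty)$, and then show that $R_2$ is always absolutely integrable in $h$, so that the short-range property of (\ref{SHORT}) is decided entirely by $R_1$.

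For the high-frequency term I would use the bound (\ref{EQ4}) from Lemma~\ref{lem4.2}. By Tonelli's theorem,
$$\int_0^\infty|R_2(h)|\,dh\le\int_{c/2D}^\infty|\tilde H_2(\mu,t)|\Big(\int_0^\infty|\tilde H_2(\mu,t+h)|\,dh\Big)G(d\mu),$$
and bounding both factors by $e^{-\frac{c^2}{2D}s}\big(1+\frac{c^2}{2D}s\big)$ shows the inner $h$-integral is finite and uniform in $\mu$, while $G$ is a bounded measure; hence $\int_0^\infty|R_2(h)|\,dh<\infty$ for every $t\ge 0$. Since $R_1(h)\ge 0$ by (\ref{c4eq13}), the triangle inequality then gives that $\int_0^\infty|R(1,t+h,t)|\,dh<\infty$ if and only if $\int_0^\infty R_1(h)\,dh<\infty$.

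Next I would evaluate the low-frequency contribution. Writing $\tilde H_1(\mu,s)=K_1(\mu)e^{z_1(\mu)s}+K_2(\mu)e^{z_2(\mu)s}$ as in (\ref{GenSol}), with $z_1(\mu)<z_2(\mu)<0$ on $(0,c/2D)$, direct integration yields
$$\int_0^\infty \tilde H_1(\mu,t+h)\,dh=-\frac{K_1(\mu)}{z_1(\mu)}e^{z_1(\mu)t}-\frac{K_2(\mu)}{z_2(\mu)}e^{z_2(\mu)t},$$
and, as $\tilde H_1\ge0$, Tonelli permits interchanging the $h$- and $\mu$-integrations to obtain
$$\int_0^\infty R_1(h)\,dh=\int_0^{c/2D}\tilde H_1(\mu,t)\int_0^\infty\tilde H_1(\mu,t+h)\,dh\;G(d\mu).$$
The decisive observation is the behaviour of the integrand as $\mu\to 0^+$: from (\ref{Roots}) and (\ref{Coeff}) one has $z_2(\mu)\sim -D\mu^2$, $z_1(\mu)\to -c^2/D$, $K_2(\mu)\to1$, $K_1(\mu)\to0$, and $\tilde H_1(0,t)=1$. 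Thus the leading term $-K_2(\mu)z_2(\mu)^{-1}e^{z_2(\mu)t}\sim (D\mu^2)^{-1}$, so the whole integrand is comparable to $\mu^{-2}$ near the origin.

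To conclude I would establish two-sided bounds $c_1\mu^{-2}\le \tilde H_1(\mu,t)\int_0^\infty\tilde H_1(\mu,t+h)\,dh\le c_2\mu^{-2}$ on some neighbourhood $(0,\varepsilon)$, while on $[\varepsilon,c/2D]$ the integrand is bounded and continuous, hence $G$-integrable. Therefore $\int_0^\infty R_1(h)\,dh<\infty$ if and only if $\mu^{-2}G(d\mu)$ is integrable in a neighbourhood of zero, which is the assertion. The main obstacle is the lower bound: the subdominant $z_1$-term enters with the sign of $K_1(\mu)\le0$, so one must verify that the $(D\mu^2)^{-1}$ singularity produced by the leading term survives, i.e.\ that the bounded $z_1$-contribution is negligible against $\mu^{-2}$ as $\mu\to0^+$; the uniform-in-$\mu$ estimates required to reduce to a neighbourhood of the origin and to justify the Tonelli interchanges demand the same care.
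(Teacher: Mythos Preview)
Your proposal is correct and follows essentially the same route as the paper: split $R(1,t+h,t)$ into the $\tilde H_1$ and $\tilde H_2$ contributions, dispose of the high-frequency part via the bound~(\ref{EQ4}), and use the positivity of $\tilde H_1$ to apply Tonelli on the low-frequency part. The only difference is cosmetic: the paper carries out the $h$-integration explicitly and simplifies the resulting $\mu$-integrand to the closed form $\frac{c^2}{4D^2}\,\frac{3+\sqrt{1-4D^2\mu^2/c^2}}{\mu^2}$, from which the $\mu^{-2}$ criterion is read off directly, whereas you stop at the $K_1,K_2$ representation and argue via two-sided asymptotic bounds near $\mu=0$; both lead to the same conclusion with comparable effort.
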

\begin{proof}[Proof of Theorem \ref{th3.3}] It follows from (\ref{ppp}), (\ref{pprime}) and (\ref{CovFunc}) that
	\[
	\int_0^{+\infty}|R(1,t+h,t)|dh=\int_0^{+\infty}\bigg|\int_0^{c/2D}\tilde H_1(\mu,t+h)\tilde H_1(\mu,t)G(d\mu)
	\]
	\[
	+\int_{c/2D}^{+\infty}\tilde H_2(\mu,t+h)\tilde H_2(\mu,t)G(d\mu)\bigg|dh.
	\]
	Using the upper bound from (\ref{EQ4}) we get 
	\[
	\int_0^{+\infty}\bigg|\int_{c/2D}^{+\infty}\tilde H_2(\mu,t+h)\tilde H_2(\mu,t)G(d\mu)\bigg|dh\leq
	\exp\bigg(-\dfrac{c^2}{2D}t\bigg)\bigg[1+\dfrac{c^2}{2D}t\bigg]\cdot G\bigg(\big[\frac{c}{2D},+\infty\big)\bigg)
	\]
	\begin{equation}\label{EQ5}
		\times\int_0^{+\infty}\exp\bigg(-\dfrac{c^2}{2D}h\bigg)\bigg[1+\dfrac{c^2}{2D}(t+h)\bigg]dh<+\infty.
	\end{equation}
	Hence, to study the integrability of the covariance function $|R(1,t+h,t)|$ one has to investigate the integral
	\begin{equation}\label{EQ6}
		\int_0^{+\infty}\bigg|\int_0^{c/2D}\tilde H_1(\mu,t+h)\tilde H_1(\mu,t)G(d\mu)\bigg|dh.
	\end{equation}
	As $\tilde H_1(\mu,t)>0$ for $|\mu|\leq \dfrac{c}{2D}$, $t \geq 0$, it is equivalent to studying the integral
	\[
	\int_0^{c/2D}\int_0^{+\infty}\tilde H_1(\mu,t+h)\tilde H_1(\mu,t)dh\ G(d\mu),
	\]
	or, by (\ref{ppp}) and $\cosh\big(\frac{c^2t}{2D}\sqrt{1-\frac{4D^2}{c^2}\mu^2}\big) \in \big[1,\cosh \frac{c^2t}{2D} \big]$ for $\mu \in \big[0, \frac{c}{2D}\big]$, to the investigating of the finiteness of the integral
	\begin{align*}
		&\int_0^{c/2D}\int_0^{+\infty}\left(\exp\bigg(-\dfrac{c^2}{2D}h\bigg(1-\sqrt{1-\frac{4D^2}{c^2}\mu^2}\bigg)\bigg)\bigg[1+\frac{1}{\sqrt{1-\frac{4D^2}{c^2}\mu^2}}\bigg]\right.\\
		&\left.-\exp\left(-\dfrac{c^2}{2D}h\bigg(1+\sqrt{1-\frac{4D^2}{c^2}\mu^2}\bigg)\right)\dfrac{1}{\sqrt{1-\frac{4D^2}{c^2}\mu^2}}\right)\left(1+\dfrac{\sinh\bigg(\dfrac{c^2t}{2D}\sqrt{1-\frac{4D^2}{c^2}\mu^2}\bigg)}{\sqrt{1-\frac{4D^2}{c^2}\mu^2}}\right)dh\ G(d\mu)\\
		&=\dfrac{2D}{c^2}\int_0^{c/2D}\left(\dfrac{1}{1-\sqrt{1-\frac{4D^2}{c^2}\mu^2}}+\left(\dfrac{1}{1-\sqrt{1-\frac{4D^2}{c^2}\mu^2}}-\dfrac{1}{1+\sqrt{1-\frac{4D^2}{c^2}\mu^2}}\right)\dfrac{1}{\sqrt{1-\frac{4D^2}{c^2}\mu^2}}\right)
	\end{align*}	
	\begin{align*}
		&\times\left(1+\dfrac{\sinh\bigg(\dfrac{c^2t}{2D}\sqrt{1-\frac{4D^2}{c^2}\mu^2}\bigg)}{\sqrt{1-\frac{4D^2}{c^2}\mu^2}}\right)G(d\mu).
	\end{align*}
	Noting that $\dfrac{\sin(h)}{h}\in\left[0,\dfrac{\sinh(A)}{A}\right]$ on $[0,A]$, $A>0$, we obtain that (\ref{EQ6}) is finite if and only if the following integral converges
	\[
	\int_0^{c/2D} \left(\dfrac{1}{1-\sqrt{1-\frac{4D^2}{c^2}\mu^2}}+\dfrac{c^2}{{2D^2}\mu^2}\right)G(d\mu)=\dfrac{c^2}{4D^2}\int_0^{c/2D}\dfrac{3+\sqrt{1-\frac{4D^2}{c^2}\mu^2}}{\mu^2}G(d\mu).
	\]
	The last integral is finite only if $\int_{0}^{\varepsilon}\frac{G(d\mu)}{\mu^2}< \infty,\ \varepsilon>0$, which completes the proof.
\end{proof}

Now we extend Theorem~\ref{th3.3} to the case of arbitrary $\bold x$ and $\bold x'$ from $ S^2$.
\begin{Theorem}\label{th5.7}
	{\it The random field} $T_H(\bold x,t)$ {\it is short-range dependent if and only if} $\mu^{-2}G(d\mu)$ {\it is integrable in a neighbourhood of the origin}.
\end{Theorem}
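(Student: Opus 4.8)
The plan is to reduce the general statement to the diagonal case $\bold x=\bold x'$ already settled in Theorem~\ref{th3.3}. Necessity is then immediate: short-range dependence in the sense of~(\ref{SHORT}) demands $\int_0^{+\infty}|R(\cos\gamma,t+h,t)|\,dh<+\infty$ for \emph{every} $\gamma\in[0,\pi]$, so in particular it must hold at $\gamma=0$, that is for $\cos\gamma=1$ and $\bold x=\bold x'$. By Theorem~\ref{th3.3} this already forces $\mu^{-2}G(d\mu)$ to be integrable in a neighbourhood of the origin. Hence the whole content of the proof lies in the sufficiency direction.

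For sufficiency I would assume $\int_0^{\varepsilon}\mu^{-2}G(d\mu)<+\infty$ for some $\varepsilon>0$ and show that~(\ref{SHORT}) holds, with a bound independent of $\gamma$. Setting $a=2\sin\frac{\gamma}{2}\in[0,2]$ and splitting the spectral integral in~(\ref{CovFunc}) at the cut-off $\mu=c/2D$ exactly as in the proof of Theorem~\ref{th3.3}, write $R(\cos\gamma,t+h,t)=R_1+R_2$, where $R_i$ collects the contribution of $\tilde H_i$ from~(\ref{ppp}) and~(\ref{pprime}). The key simplification is that the spherical factor obeys $\big|\tfrac{\sin(a\mu)}{a\mu}\big|\le 1$ for all admissible $\gamma$ and $\mu$, so that each term is pointwise dominated by its value at $\gamma=0$.

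For the high-frequency part, combining $\big|\tfrac{\sin(a\mu)}{a\mu}\big|\le 1$ with the exponential estimate~(\ref{EQ4}) of Lemma~\ref{lem4.2} reproduces verbatim the finite bound~(\ref{EQ5}); this contribution converges for any $\gamma$. For the low-frequency part I use that $\tilde H_1(\mu,t)>0$ on $[0,c/2D]$ to write
\[
\int_0^{+\infty}|R_1|\,dh
\le \int_0^{+\infty}\!\!\int_0^{c/2D}\Big|\tfrac{\sin(a\mu)}{a\mu}\Big|\,\tilde H_1(\mu,t+h)\,\tilde H_1(\mu,t)\,G(d\mu)\,dh
\le \int_0^{+\infty}\!\!\int_0^{c/2D}\tilde H_1(\mu,t+h)\,\tilde H_1(\mu,t)\,G(d\mu)\,dh .
\]
By positivity of $\tilde H_1$ and Tonelli's theorem the last double integral is exactly the quantity~(\ref{EQ6}) analysed for Theorem~\ref{th3.3}, which was shown there to be finite precisely when $\int_0^{\varepsilon}\mu^{-2}G(d\mu)<+\infty$. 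Under the standing assumption this holds, and the resulting bound does not depend on $\gamma$; together with the high-frequency estimate this yields~(\ref{SHORT}) for every $\gamma\in[0,\pi]$.

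The only delicate point — and the closest thing to an obstacle — is that the spherical factor $\tfrac{\sin(a\mu)}{a\mu}$ need not stay positive once $a\mu>\pi$, so one cannot simply transcribe the positivity argument of Theorem~\ref{th3.3}. Passing to the absolute value and bounding it by $1$ circumvents this and turns an arbitrary angular separation into a clean majorant of the diagonal case, after which the argument collapses onto the computation already performed for Theorem~\ref{th3.3}.
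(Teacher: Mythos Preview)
Your proof is correct, and it takes a genuinely simpler route than the paper's own argument. For sufficiency you dominate the spherical factor by $\bigl|\tfrac{\sin(a\mu)}{a\mu}\bigr|\le 1$ and use the nonnegativity of $\tilde H_1$ from Lemma~\ref{lem4.2} to bound $\int_0^\infty |R_1|\,dh$ directly by the quantity~(\ref{EQ6}); for necessity you simply specialise the definition~(\ref{SHORT}) to $\gamma=0$ and invoke Theorem~\ref{th3.3}. The paper instead partitions $[0,c/2D)$ into the finitely many subintervals on which the sinc factor keeps a constant sign, and then repeats the computation of Theorem~\ref{th3.3} on each piece to extract, for every fixed $\gamma\in(0,\pi)$, a necessary and sufficient condition
\[
\int_0^{\pi/(2\sin\frac{\gamma}{2})}\frac{\sin(2\mu\sin\tfrac{\gamma}{2})}{2\mu\sin\tfrac{\gamma}{2}}\,\frac{G(d\mu)}{\mu^2}<\infty,
\]
which is then seen to be equivalent to the $\gamma=0$ criterion via $\sin(x)/x\to 1$. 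So the paper's detour buys a slightly stronger intermediate statement --- a per-$\gamma$ ``if and only if'' --- whereas your argument establishes exactly the theorem with less work and yields a bound uniform in $\gamma$ as a bonus.
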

\begin{proof}[Proof of Theorem \ref{th5.7}]  Note that by (\ref{CovFunc}) the integrators in $R(\cos{\gamma},t',t)$ and $R(1,t',t)$ differ only by a multiplier $\frac{\sin\left(2\mu\sin \frac{\gamma}{2}\right)}{2\mu\sin \frac{\gamma}{2}}$.
	
	Thus, 
	\[
	\int_0^{+\infty}|R(\cos{\gamma},t+h,t)|dh=\int_0^{+\infty}\bigg|\int_0^{c/2D}\dfrac{\sin\left(2\mu\sin \frac{\gamma}{2}\right)}{2\mu\sin \frac{\gamma}{2}}\tilde H_1(\mu,t+h)\tilde H_1(\mu,t)G(d\mu)
	\]
	\[
	+\int_{c/2D}^{+\infty}\dfrac{\sin\left(2\mu\sin \frac{\gamma}{2}\right)}{2\mu\sin \frac{\gamma}{2}}\tilde H_2(\mu,t+h)\tilde H_2(\mu,t)G(d\mu)\bigg|dh.
	\]
	It follows from the estimates (\ref{EQ4}), (\ref{EQ5}) and the inequality $\big\vert \frac{\sin(x)}{x}\big\vert\leq 1$ that 
	\[
	\int_{0}^{+\infty}\bigg|\int_{c/2D}^{+\infty}\dfrac{\sin\left(2\mu\sin \frac{\gamma}{2}\right)}{2\mu\sin \frac{\gamma}{2}}\tilde H_2(\mu,t+h)\tilde H_2(\mu,t)G(d\mu)\bigg|dh<+\infty.
	\]
	Now, note that for $\gamma\in (0,\pi) $ the interval $\left[ 0,c/2D\right) $ can be split into a finite number of subintervals $$\left[ 0,c/2D\right) =\bigcup_{k=1}^{K}\left[ \frac{\pi}{2\sin\frac{\gamma}{2}}(k-1),\frac{\pi}{2\sin\frac{\gamma}{2}}k \right)\bigcup\left[\frac{\pi}{2\sin\frac{\gamma}{2}}K,\frac{c}{2D}\right),$$ where $ K=\left[\frac{c\sin\frac{\gamma}{2}}{\pi D}\right] $ and $ [a] $ denotes the integer part of $a$. The ratio $ \frac{\sin\left(2\mu\sin \frac{\gamma}{2}\right)}{2\mu\sin \frac{\gamma}{2}} $ has the same sign on each of these subintervals. 
	Therefore, similar to the proof of Theorem~\ref{th3.3} we obtain the sufficient and necessary condition for the integrability of $\vert R(\cos{\gamma},t',t)\vert$
	$$ \int_0^{\frac{\pi}{2\sin\frac{\gamma}{2}}}\frac{\sin\left(2\mu\sin \frac{\gamma}{2}\right)}{2\mu\sin \frac{\gamma}{2}}\frac{G(d\mu)}{\mu^2}<\infty.$$	
	Note that by $\lim_{\mu \to 0}\frac{\sin(\mu)}{\mu}=1$ this condition is equivalent to the one in Theorem~\ref{th3.3}. This completes the proof. 
\end{proof} 
\section{Approximations to solutions}\label{sec5.5}
This section introduces and studies approximate solutions of the initial value problem {\rm(\ref{telegraph})}-{\rm(\ref{incond})}.
A mean-square convergence rate to the diffusion field in terms of the angular power spectrum $C_l$ is obtained. Then several specifications in terms of the measure $G(\cdot)$ are discussed.

We define the approximation $ \tilde T_{H,L}(\theta,\varphi,t)$ of the truncation degree $L \in \mathbb{N}$ to the solution $ \tilde T_{H}(\theta,\varphi,t)$ given by~(\ref{SpectrRepres1}) as 
\begin{equation*}
	\tilde T_{H,L}(\theta,\varphi,t) =  \sum_{l=0}^{L-1}  Y_{lm}(\theta,\varphi) \; a_{lm}(t) ,\quad \theta \in [0,\pi],\ \varphi \in [0,2\pi),\ t \geq 0.
\end{equation*}
The next result provides the convergence rate of 
$\tilde T_{H,L}(\theta,\varphi,t)$ to $\tilde T_{H}(\theta,\varphi,t)$ when $L \to \infty.$
\begin{Theorem}\label{thc4.6}
	Let $\tilde T_{H}(\theta,\varphi,t)$ be the solution to the initial value problem {\rm(\ref{telegraph})}-{\rm(\ref{incond})} and $\tilde T_{H,L}(\theta,\varphi,t)$ the corresponding approximation of truncation degree $L \in \mathbb{N}$. Then, 
	\begin{align*}
		\sup_{t\geq 0}\Vert \tilde T_{H}(\theta,\varphi,t)- \tilde T_{H,L}(\theta,\varphi,t) \Vert_{L_2(\Omega\times S^2)}\leq \frac{1}{2\sqrt{\pi}}\bigg(\sum_{l=L}^{\infty}(2l+1)C_l\bigg)^{1/2}.
	\end{align*}
\end{Theorem}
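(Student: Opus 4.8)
The plan is to recognise the approximation error as the high-frequency tail of the Laplace series and to evaluate its $L_2(\Omega\times S^2)$-norm exactly by Parseval's identity, after which a single uniform-in-$t$ bound on the angular spectrum closes the argument. First I would write
$$\tilde T_H(\theta,\varphi,t)-\tilde T_{H,L}(\theta,\varphi,t)=\sum_{l=L}^\infty\sum_{m=-l}^l Y_{lm}(\theta,\varphi)\,a_{lm}(t),$$
so that, squaring and integrating over $S^2$ against the normalised measure $\tilde\sigma(d\bold x)/(4\pi)$, the orthonormality relation $\int_{S^2}Y_{lm}^* Y_{l'm'}\,\tilde\sigma(d\bold x)=\delta_l^{l'}\delta_m^{m'}$ collapses the spatial integral to $\frac{1}{4\pi}\sum_{l\geq L}\sum_m|a_{lm}(t)|^2$. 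Taking expectation and applying the isotropy identity (\ref{c43}), which yields $\mathbf E|a_{lm}(t)|^2=C_l(t,t)$ and hence $2l+1$ equal contributions for each $l$, I obtain the exact expression
$$\big\|\tilde T_H(\cdot,t)-\tilde T_{H,L}(\cdot,t)\big\|_{L_2(\Omega\times S^2)}^2=\frac{1}{4\pi}\sum_{l=L}^\infty(2l+1)\,C_l(t,t).$$

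The second step is to remove the $t$-dependence by proving the term-by-term bound $C_l(t,t)\leq C_l$ uniformly in $t\geq 0$. Using the representation of $C_l(t,t)$ from Theorem~\ref{th3.2} and splitting the integral at $\mu=c/2D$, it suffices to show $\tilde H_1^2(\mu,t)\leq 1$ and $\tilde H_2^2(\mu,t)\leq 1$. The first is immediate from (\ref{c4eq13}) in Lemma~\ref{lem4.2}; the second follows from the estimate (\ref{EQ4}) combined with the elementary inequality $\sup_{x\geq 0}(1+x)e^{-x}=1$ applied with $x=c^2t/(2D)$, exactly the device already used in (\ref{tilda3}). Since $\tilde H(\mu,0)=1$ gives $C_l(0,0)=C_l$, these pointwise bounds yield
$$C_l(t,t)\leq 2\pi^2\int_0^\infty \frac{J_{l+1/2}^2(\mu)}{\mu}\,G(d\mu)=C_l,\qquad l=0,1,\dots,\ \ t\geq 0.$$

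Finally, I would insert this uniform bound into the exact norm expression, take the supremum over $t\geq 0$ (harmless, as the bound is $t$-independent), and extract the square root to reach
$$\sup_{t\geq0}\big\|\tilde T_H(\cdot,t)-\tilde T_{H,L}(\cdot,t)\big\|_{L_2(\Omega\times S^2)}\leq\frac{1}{2\sqrt\pi}\Big(\sum_{l=L}^\infty(2l+1)\,C_l\Big)^{1/2}.$$
The only genuinely delicate point is the uniform control of the oscillatory branch $\tilde H_2$: naively $|\tilde H_2(\mu,t)|$ carries the factor $(1+c^2t/(2D))$, which grows in $t$, and it is only after pairing it with the exponential damping $e^{-c^2t/(2D)}$ that one secures a bound independent of $t$, which is precisely what makes the supremum over $t\geq 0$ finite. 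Everything else is the routine Parseval computation, with convergence of the tail $\sum_{l\geq L}(2l+1)C_l$ guaranteed by the finite-variance assumption (\ref{3.star}).
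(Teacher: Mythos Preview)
Your proposal is correct and follows essentially the same route as the paper. The paper computes the pointwise $L_2(\Omega)$ norm and then applies the addition formula $\sum_m Y_{lm}Y_{lm}^*=(2l+1)/4\pi$, whereas you integrate over $S^2$ using orthonormality of the $Y_{lm}$; both yield the exact identity (\ref{tilda4}), and your term-by-term bound $C_l(t,t)\leq C_l$ via Lemma~\ref{lem4.2} and $\sup_{x\geq 0}(1+x)e^{-x}=1$ is precisely the content of the estimate (\ref{tilda3}) that the paper invokes at the last step.
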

\begin{proof}[Proof of Theorem \ref{thc4.6}] 
	Note that by properties of $a_{lm}(t)$ we get 
	$$\bold{E}(\tilde T_{H}(\theta,\varphi,t)- \tilde T_{H,L}(\theta,\varphi,t))=0$$
	for all $L \in \mathbb{N}$, $\theta \in [0,\pi]$, $\varphi \in [0,2\pi)$ and $t\geq 0 $.
	
	Then, by (\ref{c43}) and~(\ref{SpectrRepres1}) it follows that
	\begin{align*}
		\Vert \tilde T_{H}(\theta,\varphi,t)- \tilde T_{H,L}(\theta,\varphi,t) \Vert_{L_2(\Omega\times S^2)}&=\bigg( \sum_{l=L}^{\infty} \sum_{m=-l}^{l} Y_{lm}(\theta,\varphi) Y_{lm}^{*}(\theta,\varphi)
		\bold{E}(a_{lm}(t)a_{lm}^{*}(t))
		\bigg)^{1/2}\\
		&=\bigg( \sum_{l=L}^{\infty}\sum_{m=-l}^{l}Y_{lm}(\theta,\varphi)  Y_{lm}^{*}(\theta,\varphi) C_l(t,t)  \bigg)^{1/2}.
	\end{align*}
	Using the addition formula for spherical harmonics one gets
	\begin{align}\label{tilda4}
		\Vert \tilde T_{H}(\theta,\varphi,t)- \tilde T_{H,L}(\theta,\varphi,t) \Vert_{L_2(\Omega\times S^2)}=\frac{1}{2\sqrt{\pi}}\bigg( \sum_{l=L}^{\infty}(2l+1)C_l(t,t) \bigg)^{1/2}.
	\end{align}
	Finally, by (\ref{tilda3})  
	\begin{align*}
		\Vert \tilde T_{H}(\theta,\varphi,t)- \tilde T_{H,L}(\theta,\varphi,t) \Vert_{L_2(\Omega\times S^2)}\leq \frac{1}{2\sqrt{\pi}}\bigg(  \sum_{l=L}^{\infty}(2l+1)C_l \bigg)^{1/2}.
	\end{align*}
	$\qquad \qquad \qquad \qquad \qquad \qquad \qquad \qquad \qquad \qquad \qquad \qquad \qquad \qquad \qquad \qquad \qquad \qquad \qquad \qquad \qquad $
\end{proof}
In the general case of an arbitrary measure $G(\cdot)$, it is impossible to get a bound similar to {\rm(36)} in {\rm \cite{broadbridge2019random}}, i.e. 
\begin{align}\label{wbaden}
	\Vert  u(\theta,\varphi,t)-  u_{L}(\theta,\varphi,t) \Vert_{L_2(\Omega\times S^2)}\leq C \exp\bigg(-\frac{c^2t}{2D}\bigg)
	\bigg(  \sum_{l=L}^{\infty}(2l+1)C_l \bigg)^{1/2},
\end{align}
even for a sufficiently large $L$.
\begin{Theorem}\label{th55.121}
	For any fixed $C>0$ and $L\in N$ there exist $t>0$ and an initial random condition $\eta(\bold x),\ \bold x \in \mathbb{R}^3$, such that the norm of the approximation error $\tilde{T}_H(\theta, \varphi,t)-\tilde{T}_{H,L}(\theta, \varphi,t)$ does not satisfy~{\rm (\ref{wbaden})}.
\end{Theorem}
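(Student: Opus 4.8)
The plan is to build an explicit counterexample that plays the exact error identity~(\ref{tilda4}) against the oscillatory high-frequency factor $\tilde H_2$. The point is that the claimed bound~(\ref{wbaden}) carries the \emph{clean} exponential $\exp(-c^2t/(2D))$, whereas the true decay of $\tilde H_2(\mu,t)$ is $\exp(-c^2t/(2D))$ multiplied by a trigonometric term whose amplitude is governed by $\tfrac{c}{2D\sqrt{\mu^2-c^2/(4D^2)}}$, and this amplitude is \emph{unbounded} as $\mu\downarrow c/(2D)$. Concentrating the initial spectrum at a single frequency just above the cut-off should therefore force the genuine error to exceed the right-hand side of~(\ref{wbaden}) for a suitably chosen $t$.

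Concretely, I would first take $G=\delta_{\mu_0}$, the unit point mass at a frequency $\mu_0>c/(2D)$; this is an admissible bounded non-negative spectral measure, so the associated homogeneous isotropic field $\eta$ (with covariance $\sin(\mu_0 r)/(\mu_0 r)$) exists, and by Theorem~\ref{th3.2} only the $\tilde H_2$ term survives. Since the multiplier $\tilde H_2^2(\mu_0,t)$ does not depend on $l$, the angular spectra satisfy $C_l(t,t)=\tilde H_2^2(\mu_0,t)\,C_l$ for every $l$, and the exact identity~(\ref{tilda4}) collapses to
\begin{equation*}
\Vert \tilde T_H(\theta,\varphi,t)-\tilde T_{H,L}(\theta,\varphi,t)\Vert_{L_2(\Omega\times S^2)}=\frac{1}{2\sqrt{\pi}}\,\bigl|\tilde H_2(\mu_0,t)\bigr|\,\Bigl(\sum_{l=L}^{\infty}(2l+1)C_l\Bigr)^{1/2}.
\end{equation*}
Because $\sum_{l=L}^{\infty}(2l+1)C_l$ is finite (it equals $4\pi$ minus a finite partial sum, using $\sum_l\frac{2l+1}{4\pi}C_l=G([0,\infty))=1$) and strictly positive (infinitely many $J_{l+1/2}(\mu_0)\neq0$), it cancels the identical factor in~(\ref{wbaden}); hence the alleged bound would force $\bigl|\tilde H_2(\mu_0,t)\bigr|\le 2\sqrt{\pi}\,C\exp(-c^2t/(2D))$.

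Next I would exhibit $\mu_0$ and $t$ violating this for the prescribed $C$. Writing $\beta:=\sqrt{\mu_0^2-c^2/(4D^2)}>0$ and using the explicit form~(\ref{pprime}), I choose $t=\pi/(2c\beta)>0$, so that $ct\beta=\pi/2$ and therefore $\tilde H_2(\mu_0,t)=\exp(-c^2t/(2D))\cdot\tfrac{c}{2D\beta}$; the required inequality then reads $\tfrac{c}{2D\beta}\le 2\sqrt{\pi}\,C$. Letting $\mu_0\downarrow c/(2D)$ drives $\beta\downarrow 0$ and makes the left-hand side unbounded, so for $\mu_0$ close enough to the cut-off the inequality fails, and the corresponding $(\eta,t)$ violates~(\ref{wbaden}). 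Note that this choice is essentially independent of $L$, which is consistent with the universal quantifier over $L$ in the statement.

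I expect the only delicate points to be bookkeeping rather than analysis: verifying that $G=\delta_{\mu_0}$ is admissible and that the tail $\sum_{l=L}^{\infty}(2l+1)C_l$ is positive and finite so that it may legitimately be cancelled, and confirming that $\mu_0>c/(2D)$ places the entire support in the regime where $\tilde H_2$ governs. The conceptual heart of the argument is the cancellation of all $l$-dependent factors for a single-frequency measure, which isolates the amplitude $c/(2D\beta)$; this is precisely the feature that the $l$-independent exponential prefactor in~(\ref{wbaden}) cannot accommodate.
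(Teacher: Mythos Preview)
Your proposal is correct, and it takes a genuinely different route from the paper's proof. The paper works on the \emph{low-frequency} side of the cut-off: it concentrates $G$ on an interval $I_\varepsilon\subset[0,c/(2D))$ close to the origin, so that only $\tilde H_1$ enters, and uses the lower bound $\tilde H_1(\mu,t)\ge\exp(-\varepsilon c^2t/(2D))$ there. The point is that the true exponential rate can be made as slow as $\varepsilon c^2/(2D)$ for any $\varepsilon>0$, which eventually beats the target rate $c^2/(2D)$ for large $t$. Your construction sits on the \emph{high-frequency} side: a single Dirac mass at $\mu_0>c/(2D)$ forces only $\tilde H_2$ to act, and by choosing $t=\pi/(2c\beta)$ you hit the sine peak so that the error equals $\tfrac{1}{2\sqrt\pi}\,\tfrac{c}{2D\beta}\exp(-c^2t/(2D))(\sum_{l\ge L}(2l+1)C_l)^{1/2}$; the amplitude $c/(2D\beta)$ diverges as $\mu_0\downarrow c/(2D)$, defeating any prescribed $C$.

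The trade-offs are instructive. The paper's argument shows that the \emph{rate} in~(\ref{wbaden}) is wrong, which dovetails with Theorem~\ref{th4.6} where the correct rate $D\delta^2$ appears once $G$ is bounded away from $0$. Your argument shows that even with the full rate $c^2/(2D)$ the \emph{constant} cannot be uniform in the initial data, and it does so with a completely explicit single-frequency example where all $l$-dependence factors out. Your route is more elementary and self-contained; the paper's is more directly tied to the surrounding narrative about how the support of $G$ controls the decay. Your bookkeeping remarks (admissibility of $\delta_{\mu_0}$, positivity and finiteness of the tail sum) are accurate and sufficient.
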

\begin{proof}[Proof of Theorem \ref{th55.121}] 
	Indeed, let us consider some $\varepsilon \in (0,1)$. 
	
	Then, $\sqrt{1-\frac{4D^2}{c^2}\mu^2}\geq 1-\varepsilon$ if $\mu \in I_\varepsilon:= \big[0, \sqrt{\frac{c^2}{4D^2}(1-(1-\varepsilon)^2)}\big]$. 
	
	Let the measure $G(\cdot)$ be concentrated on the interval $I_\varepsilon$.
	By {\rm(\ref{ppp})}, if $\mu \in I_\varepsilon$ then 
	$$\tilde H_1(\mu,t)\geq \exp\bigg(-\dfrac{c^2}{2D}t\bigg(1-\sqrt{1-\frac{4D^2}{c^2}\mu^2}\bigg)      \bigg) \geq \exp\bigg(-\frac{c^2}{2D}t\varepsilon\bigg).$$
	
	Hence, by {\rm(\ref{tilda4})} and Theorem~{\rm\ref{th3.2}} for any $C,\ L>0$, there exists $t,\ \varepsilon>0$, and the measure $G(\cdot)$ such that for the corresponding $\tilde T_{H}(\theta,\varphi,t)$ and $\tilde T_{H,L}(\theta,\varphi,t)$ it holds
	\begin{align*}
		\Vert \tilde T_{H}(\theta,\varphi,t)- \tilde T_{H,L}(\theta,\varphi,t) \Vert_{L_2(\Omega\times S^2)}&\geq\frac{1}{2\sqrt{\pi}} \exp\bigg(-\frac{c^2}{2D}t\varepsilon\bigg)	\bigg(  \sum_{l=L}^{\infty}(2l+1)C_l \bigg)^{1/2}\\
		&\geq C\exp\bigg(-\frac{c^2}{2D}t\bigg)	\bigg(  \sum_{l=L}^{\infty}(2l+1)C_l \bigg)^{1/2}.
	\end{align*}
	$\qquad \qquad \qquad \qquad \qquad \qquad \qquad \qquad \qquad \qquad \qquad \qquad \qquad \qquad \qquad \qquad \qquad \qquad \qquad \qquad \qquad $
\end{proof}
However, it is possible to obtain a rate of convergence that is exponential in $t$ if the measure $G(\cdot)$ has a bounded support.
\begin{Theorem}\label{th4.6}
	Let $\eta(\bold x),\ \bold x \in \mathbb{R}^3$, have the measure $G(\cdot)$ such that $G([0,\delta])=0$ for some $\delta \in (0, \frac{c}{2D})$. Then, for the solution $\tilde T_{H}(\theta,\varphi,t)$ of the initial value problem {\rm(\ref{telegraph})}-{\rm(\ref{incond})} and its approximation $\tilde T_{H,L}(\theta,\varphi,t)$ it holds 
	$$	\Vert \tilde T_{H}(\theta,\varphi,t)- \tilde T_{H,L}(\theta,\varphi,t) \Vert_{L_2(\Omega\times S^2)}\leq C \exp\big(-D\delta^2t\big)\bigg( \sum_{l=L}^{\infty} (2l+1)C_l \bigg)^{1/2}.$$
\end{Theorem}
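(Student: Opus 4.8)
The plan is to start from the exact error identity established inside the proof of Theorem~\ref{thc4.6}, namely~(\ref{tilda4}),
$$\Vert \tilde T_{H}(\theta,\varphi,t)- \tilde T_{H,L}(\theta,\varphi,t) \Vert_{L_2(\Omega\times S^2)}=\frac{1}{2\sqrt{\pi}}\bigg( \sum_{l=L}^{\infty}(2l+1)C_l(t,t) \bigg)^{1/2},$$
and to reduce the whole statement to a pointwise-in-$\mu$ estimate for the time factors $\tilde H_1,\tilde H_2$ entering the angular spectrum of Theorem~\ref{th3.2}. Since $G([0,\delta])=0$, that spectrum collapses to
$$C_l(t,t) = 2\pi^2\bigg[ \int_\delta^{c/2D}\frac{J_{l+1/2}^2(\mu)}{\mu}\tilde H_1^2(\mu,t)\,G(d\mu) + \int_{c/2D}^{\infty}\frac{J_{l+1/2}^2(\mu)}{\mu}\tilde H_2^2(\mu,t)\,G(d\mu)\bigg],$$
so it suffices to prove a uniform bound $\tilde H_i(\mu,t)\le C\exp(-D\delta^2 t)$ on the relevant $\mu$-ranges. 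Substituting this into $C_l(t,t)$ and comparing with $C_l=C_l(0,0)$, for which $\tilde H_1(\mu,0)=\tilde H_2(\mu,0)=1$, would give $C_l(t,t)\le C^2 e^{-2D\delta^2 t}C_l$, and feeding this back into~(\ref{tilda4}) yields the claim.

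I would first dispatch the oscillatory range $\mu\ge c/2D$. Here Lemma~\ref{lem4.2} already furnishes $|\tilde H_2(\mu,t)|\le \exp(-\tfrac{c^2}{2D}t)(1+\tfrac{c^2}{2D}t)$. Because $\delta<\tfrac{c}{2D}$ forces $D\delta^2<\tfrac{c^2}{2D}$, the intrinsic rate $\tfrac{c^2}{2D}$ strictly exceeds the target rate $D\delta^2$, so the surplus decay $\exp(-(\tfrac{c^2}{2D}-D\delta^2)t)$ swallows the polynomial factor and leaves a bound $C\exp(-D\delta^2 t)$ with $C$ independent of $t$.

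The heart of the argument is the non-oscillatory range $\mu\in[\delta,c/2D]$. Writing $a=\tfrac{c^2}{2D}t$ and $s=s(\mu)=\sqrt{1-\tfrac{4D^2}{c^2}\mu^2}$, formula~(\ref{ppp}) reads $\tilde H_1(\mu,t)=e^{-a}\big[\cosh(sa)+\tfrac{1}{s}\sinh(sa)\big]$, and the elementary bounds $\sinh(sa)\le sa\cosh(sa)$ and $\cosh(sa)\le e^{sa}$ yield the clean estimate $\tilde H_1(\mu,t)\le (1+a)e^{-a(1-s)}$, which is free of the spurious $1/s$ singularity at the cut-off $\mu=c/2D$. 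Since $a(1-s)=-z_2(\mu)\,t$ with $z_2(\mu)$ the root from~(\ref{Roots}), and $z_2$ is decreasing in $\mu$, the slowest decay sits at $\mu=\delta$; the inequality $1-\sqrt{1-x}\ge x/2$ with $x=\tfrac{4D^2}{c^2}\mu^2$ then shows $-z_2(\mu)\ge D\mu^2\ge D\delta^2$ throughout.

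The one genuinely delicate point, which I expect to be the main obstacle, is again the polynomial prefactor $(1+a)$: the bare rate $D\delta^2$ cannot absorb it into a $t$-independent constant. The resolution is that $1-\sqrt{1-x}\ge x/2$ is \emph{strict} for $x>0$, so $-z_2(\delta)=\tfrac{c^2}{2D}(1-s(\delta))>D\delta^2$; putting $\rho:=-z_2(\delta)-D\delta^2>0$ gives $\tilde H_1(\mu,t)\le (1+\tfrac{c^2}{2D}t)e^{-\rho t}\,e^{-D\delta^2 t}\le C e^{-D\delta^2 t}$ with $C=\sup_{t\ge0}(1+\tfrac{c^2}{2D}t)e^{-\rho t}<\infty$. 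The strict separation $\delta<c/2D$ is precisely what opens the spectral gap $\rho>0$, so the assumption that the lowest active frequency $\delta$ stays strictly below the cut-off is essential rather than cosmetic. Combining both ranges gives $C_l(t,t)\le C^2 e^{-2D\delta^2 t}C_l$, and substitution into~(\ref{tilda4}) completes the proof.
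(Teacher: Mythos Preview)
Your proof is correct and follows the same strategy as the paper: start from the error identity~(\ref{tilda4}), establish pointwise bounds $\tilde H_i(\mu,t)\le C\exp(-D\delta^2 t)$ on the relevant $\mu$-ranges, deduce $C_l(t,t)\le C^2 e^{-2D\delta^2 t}C_l$, and substitute back.

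The execution of the two pointwise bounds differs in minor but instructive ways. For $\tilde H_1$ the paper never acquires the $(1+a)$ prefactor you need to absorb: it first uses monotonicity of $\cosh$ and of $\sinh(x)/x$ to reduce to $\tilde H_1(\delta,t)$, then applies $\cosh,\sinh\le e^x$ at $\mu=\delta$ and obtains directly the $t$-free constant $1+\big(1-\tfrac{4D^2}{c^2}\delta^2\big)^{-1/2}$. For $\tilde H_2$ the paper replaces your spectral-gap absorption by the elementary inequality $1+x\le a^{-1}e^{ax}$ with $a=\sqrt{1-4D^2\delta^2/c^2}\in(0,1)$, landing on the same exponent and again an explicit constant. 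Your route via the strict inequality $1-\sqrt{1-x}>x/2$ for $x>0$ is equally valid and makes more visible why the hypothesis $\delta<c/2D$ (hence $\rho>0$) is essential; the paper's route buys a cleaner explicit constant and avoids the detour through the polynomial prefactor.
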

\begin{proof}[Proof of Theorem \ref{th4.6}] 
	As $\frac{sinh(x)}{x} $ is an increasing function on $(0,\infty)$ it follows from $(\ref{ppp})$ that for $\mu \geq \delta$	
	\begin{align*}
		\tilde H_1(\mu,t) &\leq \exp\bigg(-\frac{c^2}{2D}t \bigg) \bigg( \exp\bigg(\dfrac{c^2}{2D}t\sqrt{1-\frac{4D^2}{c^2}\delta^2}      \bigg) + \exp\bigg(\dfrac{c^2}{2D}t\sqrt{1-\frac{4D^2}{c^2}\delta^2}      \bigg) \frac{1}{\sqrt{1-\frac{4D^2}{c^2}\delta^2}}   \bigg)\\
		&\leq   \exp\bigg(-\dfrac{c^2}{2D}t\bigg(1-\sqrt{1-\frac{4D^2}{c^2}\delta^2}    \bigg)  \bigg) 
		\bigg( 1+  \frac{1}{\sqrt{1-\frac{4D^2}{c^2}\delta^2}} \bigg)	\\
		&=\bigg(  1+ \frac{1}{\sqrt{1-\frac{4D^2}{c^2}\delta^2}}    \bigg)
		\exp\bigg(-\dfrac{c^2}{2D}t\times \frac{4D^2\delta^2}{c^2\bigg(1+\sqrt{1-\frac{4D^2}{c^2}\delta^2}\bigg)}
		\bigg) \\
		&\leq \bigg(1+\frac{1}{\sqrt{1-\frac{4D^2}{c^2}\delta^2}}   \bigg) \exp(-D\delta^2t).
	\end{align*}
	Notice that for $x \geq 0$ and $a \in (0,1)$ it holds $1+x \leq \frac{1}{a}\exp (xa)$.	
	
	Then, using the definition of $\tilde H_2(\mu,t)$ in (\ref{pprime}) we get for $t\geq0$
	\begin{align*}
		\tilde H_2(\mu,t)&\leq \exp\bigg(-\frac{c^2}{2D}t \bigg)\bigg(1+\frac{c^2}{2D}t\bigg) \leq \exp\bigg(-\frac{c^2}{2D}t \bigg)
		\frac{1}{\sqrt{1-\frac{4D^2}{c^2}\delta^2}}  \exp\bigg(\dfrac{c^2}{2D}t\sqrt{1-\frac{4D^2}{c^2}\delta^2}     \bigg) 	\\
		&\leq  \frac{1}{\sqrt{1-\frac{4D^2}{c^2}\delta^2}}  \exp\big(-D\delta^2t     \big).
	\end{align*}
	Hence, if $G([0,\delta])=0$ it follows from Theorem~\ref{th3.2} that 
	
	$$C_l(t,t) \leq \bigg(1+\frac{1}{\sqrt{1-\frac{4D^2}{c^2}\delta^2}}\bigg)^2  \exp\big(-2D\delta^2t     \big)C_l. $$
	Applying this bound to (\ref{tilda4}) we obtain the statement of the theorem.
\end{proof}
The next result follows from~(\ref{tilda4}) and the upper bound~(\ref{EQ4}) for $\tilde H_2(\mu,t)$.
\begin{Corollary}\label{cor4.1}
	If $G\big([0,\frac{c}{2D}]\big)=0$, then 
	$$	\Vert \tilde T_{H}(\theta,\varphi,t)- \tilde T_{H,L}(\theta,\varphi,t) \Vert_{L_2(\Omega\times S^2)}\leq \frac{1}{2\sqrt{\pi}} \bigg(1+\frac{c^2}{2D}t\bigg) \exp\bigg(-\frac{c^2}{2D}t\bigg)\bigg( \sum_{l=L}^{\infty} (2l+1)C_l \bigg)^{1/2}.$$
\end{Corollary}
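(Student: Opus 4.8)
The plan is to start from the exact identity~(\ref{tilda4}), which expresses the approximation error norm as $\frac{1}{2\sqrt{\pi}}\bigl(\sum_{l=L}^{\infty}(2l+1)C_l(t,t)\bigr)^{1/2}$, and then bound each time-dependent coefficient $C_l(t,t)$ by a multiple of the initial angular power spectrum $C_l$. The whole argument is a direct concatenation of~(\ref{tilda4}), the angular-spectrum formula of Theorem~\ref{th3.2}, and the upper bound~(\ref{EQ4}).

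First I would use the hypothesis $G\big([0,\frac{c}{2D}]\big)=0$ inside the angular-spectrum formula of Theorem~\ref{th3.2}. The first integral there runs over $[0,\frac{c}{2D}]$ and carries the factor $\tilde H_1(\mu,t)\tilde H_1(\mu,t')$, so the support restriction on $G$ makes it vanish identically, leaving only the $\tilde H_2$-contribution over $\big[\frac{c}{2D},\infty\big)$. Putting $t'=t$ then gives $C_l(t,t)=2\pi^2\int_{c/2D}^{\infty}\frac{J_{l+1/2}^2(\mu)}{\mu}\,\tilde H_2^2(\mu,t)\,G(d\mu)$.

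Next I would apply the squared bound~(\ref{EQ4}) to replace $\tilde H_2^2(\mu,t)$ by the $\mu$-independent factor $\exp\big(-\frac{c^2}{D}t\big)\big(1+\frac{c^2}{2D}t\big)^2$, which pulls outside the integral. The key observation is that the residual integral equals $C_l$ itself: evaluating the same angular-spectrum formula at $t=0$ and using $\tilde H_2(\mu,0)=1$ together with $G\big([0,\frac{c}{2D}]\big)=0$ yields $C_l=C_l(0,0)=2\pi^2\int_{c/2D}^{\infty}\frac{J_{l+1/2}^2(\mu)}{\mu}\,G(d\mu)$. Hence $C_l(t,t)\le \exp\big(-\frac{c^2}{D}t\big)\big(1+\frac{c^2}{2D}t\big)^2 C_l$ for every $l$.

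Finally I would substitute this pointwise estimate into~(\ref{tilda4}), factor the common $t$-dependent constant out of the sum, and take the square root; the factor $\exp\big(-\frac{c^2}{D}t\big)\big(1+\frac{c^2}{2D}t\big)^2$ becomes $\exp\big(-\frac{c^2}{2D}t\big)\big(1+\frac{c^2}{2D}t\big)$, reproducing the claimed inequality exactly. There is no genuine obstacle here: the only point requiring a moment's care is recognising that the leftover spectral integral coincides with $C_l$, which is precisely what the support condition on $G$ guarantees, since it forces the $\tilde H_1$ term to drop out at every time, including $t=0$.
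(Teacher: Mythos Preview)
Your proposal is correct and follows exactly the route indicated by the paper, which states that the corollary follows from~(\ref{tilda4}) and the upper bound~(\ref{EQ4}) for $\tilde H_2(\mu,t)$. Your write-up simply fills in the details of that one-line justification: using the support hypothesis on $G$ to drop the $\tilde H_1$-integral in Theorem~\ref{th3.2}, bounding $\tilde H_2^2(\mu,t)$ via~(\ref{EQ4}), and identifying the residual integral with $C_l$ by evaluating at $t=0$.
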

\begin{Remark}
	The rates of convergence in Theorems~{\rm\ref{thc4.6},~\ref{th4.6}} and Corollary~{\rm\ref{cor4.1}} are sharp. Indeed, for $t=0$ one obtains 
	\begin{align*}
		\Vert \tilde T_{H}(\theta,\varphi,0)- \tilde T_{H,L}(\theta,\varphi,0) \Vert_{L_2(\Omega\times S^2)}&=
		\bigg( \sum_{l=L}^{\infty} 		
		\sum_{m=-l}^{l} Y_{lm}(\theta,\varphi) Y_{lm}^{*}(\theta,\varphi)C_l(0,0)
		\bigg)^{1/2}\\	
		&=\frac{1}{2\sqrt{\pi}}	
		\bigg( \sum_{l=L}^{\infty} (2l+1)C_l \bigg)^{1/2}.
	\end{align*}
\end{Remark}
The angular power spectrum $\{C_l$, $l=0,1, \dots\}$, of the initial random field $\eta(\bold x)$ is determined by the measure $G(\cdot)$. The following results provide some insight in the behaviour of $\sum_{l=L}^{\infty} (2l+1)C_l$ in terms of the spectral measure $G(\cdot)$.
\begin{Theorem}\label{th5.10}
	Let the angular power spectrum of $\eta(\bold x)$ be $\{C_l$, $l=0,1, \dots\}$.
	\begin{itemize}
		\item[{\rm(a)}] Then it holds
		\begin{align}\label{telda6}
			\sum_{l=L}^{\infty} (2l+1)C_l=2\pi^2 \int_{0}^{\infty}\mu\bigg(J_{L-\frac{1}{2}}(\mu)J_{L+\frac{1}{2}}^{'}(\mu)-J_{L+\frac{1}{2}}(\mu)J_{L-\frac{1}{2}}^{'}(\mu)\bigg)G(d\mu).
		\end{align}
		\item[{\rm(b)}] If $\int_{0}^{\infty} \mu^{1/3}G(d\mu)<\infty$, then 
		\begin{align}\label{telda7}
			\sum_{l=L}^{\infty} (2l+1)C_l\leq C \int_{0}^{\infty}\frac{\mu G(d\mu)}{(1+(L-\frac{3}{2})^{2}+\mu^2)^{1/3}},\qquad L \geq 2.
		\end{align}
		\item[{\rm(c)}] If the measure $G(\cdot)$ has a bounded support $[0,\delta]$, $\delta >0$, then 
		\begin{align}\label{telda8}
			\sum_{l=L}^{\infty} (2l+1)C_l\leq \frac{C}{\Gamma^2(L-\frac{1}{2})}\bigg(\frac{\delta}{2}\bigg)^{2L}, \quad L \geq 2.
		\end{align}
	\end{itemize} 
\end{Theorem}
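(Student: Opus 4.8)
The common starting point for all three parts is the spectral form of the initial angular power spectrum. Setting $t=t'=0$ in \eqref{AngularSpectrum} (so that $\tilde H(\mu,0)=\tilde H_1(\mu,0)+\tilde H_2(\mu,0)=1$) gives $C_l=2\pi^2\int_0^\infty \mu^{-1}J_{l+1/2}^2(\mu)\,G(d\mu)$, exactly as in the proof of Theorem~\ref{th5.5}. Since every summand is non-negative and $G$ is a non-negative measure, Tonelli's theorem lets me interchange summation and integration, so that
$$\sum_{l=L}^\infty (2l+1)C_l = 2\pi^2\int_0^\infty \frac{1}{\mu}\,S_L(\mu)\,G(d\mu),\qquad S_L(\mu):=\sum_{l=L}^\infty (2l+1)J_{l+1/2}^2(\mu).$$
The heart of part (a) is to evaluate $S_L(\mu)$ in closed form. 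The plan is to introduce $g_L(\mu):=\mu^2\bigl(J_{L-1/2}(\mu)J_{L+1/2}'(\mu)-J_{L+1/2}(\mu)J_{L-1/2}'(\mu)\bigr)$ and to prove the telescoping identity $g_L(\mu)-g_{L+1}(\mu)=(2L+1)J_{L+1/2}^2(\mu)$. After eliminating the derivatives through $2J_\nu'=J_{\nu-1}-J_{\nu+1}$ and the three-term recurrence $J_{\nu-1}+J_{\nu+1}=(2\nu/\mu)J_\nu$, this reduces to an elementary manipulation of the three adjacent values $J_{L-1/2},J_{L+1/2},J_{L+3/2}$. Because $J_\nu(\mu)\to 0$ super-exponentially in $\nu$ for fixed $\mu$, the full series converges and the tail $g_L(\mu)\to 0$ as $L\to\infty$; summing the telescoping identity from $L$ to $\infty$ then yields $S_L(\mu)=g_L(\mu)$, and substituting this into the displayed integral gives~\eqref{telda6}.

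For part (b) I would start from the equivalent form $g_L(\mu)=\mu^2 T_L(\mu)$ with $T_L(\mu)=J_{L-1/2}^2(\mu)+J_{L+1/2}^2(\mu)-\tfrac{2L}{\mu}J_{L-1/2}(\mu)J_{L+1/2}(\mu)$, produced by the same recurrences as in (a); then $\sum_{l\ge L}(2l+1)C_l=2\pi^2\int_0^\infty \mu\,T_L(\mu)\,G(d\mu)$, and it suffices to prove the pointwise bound $T_L(\mu)\le C\,(1+(L-\tfrac32)^2+\mu^2)^{-1/3}$. The exponent $1/3$ dictates the use of the uniform Bessel estimates $|J_\nu(x)|\le C\nu^{-1/3}$ and $|J_\nu(x)|\le Cx^{-1/3}$. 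I would split at the turning point $\mu\approx L$. When $\mu\le L$ the cross term has coefficient $\tfrac{2L}{\mu}\ge 2$, but for $L\ge 2$ both arguments lie below the first positive zero, so $J_{L\pm1/2}(\mu)>0$ and the cross term is subtracted; hence $T_L\le J_{L-1/2}^2+J_{L+1/2}^2\le CL^{-2/3}$, which matches the target since there $(L-\tfrac32)^2+\mu^2\asymp L^2$. When $\mu>L$ the coefficient satisfies $\tfrac{2L}{\mu}<2$, so $T_L\le(|J_{L-1/2}|+|J_{L+1/2}|)^2\le C\mu^{-2/3}$, which matches the target since there $(L-\tfrac32)^2+\mu^2\asymp\mu^2$ (the narrow transition band can be folded into either estimate). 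Integrating this pointwise bound against $G$ gives~\eqref{telda7}, with $\int_0^\infty\mu^{1/3}G(d\mu)<\infty$ being exactly what makes the right-hand side finite.

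For part (c), with $\operatorname{supp}G\subseteq[0,\delta]$, I would bound each term by the Poisson-integral estimate $J_{l+1/2}^2(\mu)\le(\mu/2)^{2l+1}/\Gamma^2(l+\tfrac32)$ already used for Theorem~\ref{th5.5}, giving $\mu^{-1}S_L(\mu)\le\tfrac12\sum_{l\ge L}(2l+1)(\mu/2)^{2l}/\Gamma^2(l+\tfrac32)$. Since $\mu\le\delta$ on the support of $G$, the integral is controlled by $\sum_{l\ge L}(2l+1)(\delta/2)^{2l}/\Gamma^2(l+\tfrac32)$. The ratio of consecutive terms is $\tfrac{2l+3}{2l+1}(\delta/2)^2/(l+\tfrac32)^2$, which falls below $\tfrac12$ once $l\ge l_0(\delta)$, so for $L\ge l_0(\delta)$ the series is dominated by twice its first term, and the finitely many remaining $L$ are absorbed into the ($\delta$-dependent) constant. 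Finally, the identity $\Gamma(L+\tfrac32)=(L+\tfrac12)(L-\tfrac12)\Gamma(L-\tfrac12)$ converts the leading term $(2L+1)(\delta/2)^{2L}/\Gamma^2(L+\tfrac32)$ into the claimed $C\,\Gamma^{-2}(L-\tfrac12)(\delta/2)^{2L}$, establishing~\eqref{telda8}.

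The main obstacle is part (b): the telescoping in (a) and the summation in (c) are essentially bookkeeping once the right identities and elementary Bessel bounds are in hand, whereas (b) hinges on producing the exact $1/3$-power decay of $T_L$. This is delicate because the sign of the cross term — and hence which of the two uniform Bessel bounds is effective — flips precisely at the turning point $\mu\approx L$, so the care lies in stitching the two regime-dependent estimates into a single clean bound valid across this transition.
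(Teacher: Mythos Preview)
Your proposal is correct. Parts (a) and (c) are essentially the same as the paper's proof: for (a), the paper quotes von Lommel's formula
\[
\sum_{n\ge 0}(\nu+1+2n)J_{\nu+1+2n}^2(\mu)=\tfrac{\mu^2}{4}\bigl(J_\nu^2(\mu)-J_{\nu-1}(\mu)J_{\nu+1}(\mu)\bigr),
\]
applied with $\nu=L-\tfrac12$ and $\nu=L+\tfrac12$, and then uses $2J_\nu'=J_{\nu-1}-J_{\nu+1}$ to recombine into the Wronskian form. Your telescoping identity $g_L-g_{L+1}=(2L+1)J_{L+1/2}^2$ is precisely the differenced version of the same formula, so the two arguments are interchangeable. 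For (c), both you and the paper use the Poisson-integral bound $|J_\nu(\mu)|\le(\mu/2)^\nu/\Gamma(\nu+1)$ and then a crude tail estimate; the bookkeeping differs slightly (the paper bounds the four-term expression directly rather than summing termwise), but the content is the same.

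Part (b) is where you diverge, and you have made it harder than necessary. The paper does \emph{not} pass to your form $T_L(\mu)=J_{L-1/2}^2+J_{L+1/2}^2-\tfrac{2L}{\mu}J_{L-1/2}J_{L+1/2}$; instead it stops at the intermediate expression
\[
\tfrac12\bigl[J_{L-\frac12}(\mu)\bigl(J_{L-\frac12}(\mu)-J_{L+\frac32}(\mu)\bigr)+J_{L+\frac12}(\mu)\bigl(J_{L+\frac12}(\mu)-J_{L-\frac32}(\mu)\bigr)\bigr],
\]
which is a sum of four products of two Bessel functions with orders $\ge L-\tfrac32$, and applies Landau's single uniform bound $|J_\nu(\mu)|\le C\,(1+\nu^2+\mu^2)^{-1/6}$ to each factor. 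Each product is then $\le C(1+(L-\tfrac32)^2+\mu^2)^{-1/3}$ and \eqref{telda7} follows in one line, with no case split at $\mu\approx L$ and no positivity argument for $J_{L\pm1/2}$. Your reduction to $T_L$ introduces the large coefficient $2L/\mu$, which is exactly what forces your two-regime analysis; keeping the expression in the four-term form avoids this entirely. Your argument is valid, but the paper's route is much shorter.
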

\begin{proof}[Proof of Theorem \ref{th5.10}]
	(a) It follows from the representation 
	$$C_l=2\pi^2 \int_{0}^{\infty} \frac{J_{l+\frac{1}{2}}^2(\mu)}{\mu}G(d\mu)$$
	that 
	\begin{align}\label{telda5}
		\sum_{l=L}^{\infty}(2l+1)C_l=2\pi^2\int_{0}^{\infty}\sum_{l=L}^{\infty}(2l+1)J_{l+\frac{1}{2}}^2(\mu)\frac{G(d\mu)}{\mu}.
	\end{align}
	By von Lommel's formula, see (2.60) in \cite{baricz2017series},
	
	$$\sum_{n=0}^{\infty}(\nu +1+2n)J_{\nu+1+2n}^2(\mu)=\frac{\mu^2}{4}\big(J_\nu^2(\mu)-J_{\nu-1}(\mu)J_{\nu+1}(\mu)\big),$$
	where $\mu \in \mathbb{R}$ and $\nu>-1$, we obtain 
	\begin{align}\label{5.no}
		\sum_{l=L}^{\infty}(2l+1)J_{l+\frac{1}{2}}^2(\mu)&=2\sum_{n=0}^{\infty}\bigg(L+\frac{1}{2}+2n\bigg)J_{L+\frac{1}{2}+2n}^2(\mu)+2\sum_{l=L}^{\infty}\bigg(L+1+\frac{1}{2}+2n\bigg)J_{L+1+\frac{1}{2}+2n}^2(\mu)\notag\\
		&=\frac{1}{2}\mu^2\bigg(J_{L-\frac{1}{2}}^2(\mu)-J_{L-\frac{3}{2}}(\mu)J_{L+\frac{1}{2}}(\mu)+J_{L+\frac{1}{2}}^2(\mu)-J_{L-\frac{1}{2}}(\mu)J_{L+\frac{3}{2}}(\mu)\bigg)\notag\\
		&=\frac{1}{2}\mu^2\bigg(
		J_{L-\frac{1}{2}}(\mu)
		\big(J_{L-\frac{1}{2}}(\mu)-J_{L+\frac{3}{2}}(\mu)\big)+J_{L+\frac{1}{2}}(\mu) \big(J_{L+\frac{1}{2}}(\mu)-J_{L-\frac{3}{2}}(\mu)\big)\bigg)\notag\\
		&=\mu^2 \bigg( J_{L-\frac{1}{2}}(\mu) J_{L+\frac{1}{2}}^{'}(\mu)-J_{L+\frac{1}{2}}(\mu) J_{L-\frac{1}{2}}^{'}(\mu)
		\bigg).
	\end{align}
	Now, (\ref{telda6}) follows by substituting the last expression in (\ref{telda5}).
	
	(b) Using the inequality from \cite{landau2000bessel}
	$$|J_\nu(\mu)|\leq \frac{C}{(1+\nu^2+\mu^2)^{1/6}}$$
	we obtain that for $L \geq 2$
	\begin{align*}
		&\bigg| J_{L-\frac{1}{2}}(\mu)\bigg(J_{L-\frac{1}{2}}(\mu)-J_{L+\frac{3}{2}}(\mu)\bigg) +J_{L+\frac{1}{2}}(\mu)  \bigg(J_{L+\frac{1}{2}}(\mu)-J_{L-\frac{3}{2}}(\mu)\bigg) \bigg|\\ 
		&\leq 4 \frac{C}{\left(1+(L-\frac{3}{2})^2+\mu^2\right)^{1/6}}
	\end{align*}
	which after the substitution in (\ref{telda5}) gives (\ref{telda7}).
	
	(c) By the Poisson integral formula and the identity $\int_{0}^{1} (1-t^2)^ndt=\frac{\sqrt{\pi} \Gamma(n+1)}{2\Gamma(n+\frac{3}{2})}$ one obtains
	\begin{align}\label{telda9}
		\big| J_{L-\frac{3}{2}} (\mu)\big| \leq \frac{2(\mu/2)^{L-\frac{3}{2}}}{\sqrt{\pi}\Gamma(L-1)} \int_{0}^{1} (1-t^2)^{L-2} dt = \frac{(\mu/2)^{L-\frac{3}{2}}}{\Gamma(L-\frac{1}{2})} .
	\end{align}
	If $[0,\delta],\ \delta>0$, is the support of the measure $G(\cdot)$, then it follows from (\ref{telda5}), (\ref{5.no}) and (\ref{telda9}) that 
	$$\sum_{l=L}^{\infty}(2l+1)C_l \leq \frac{C}{2^{2L-3}\Gamma^{2}(L-\frac{1}{2})}\int_{0}^{\delta}\max \left(  \mu^{2(L-\frac{3}{2})+1}, \mu^{2(L+\frac{3}{2})+1} \right)G(d\mu),$$
	which completes the proof.
\end{proof}
\section{Numerical studies}\label{sec5.6}
\numberwithin{equation}{section}
This section presents numerical studies of the solution $T_H(\bold x,t)$, its angular spectrum and covariance functions over time. We also provide some numerical analysis of approximation errors. 

It is important to clarify that the numerical analysis in this paper is rather different from the one in \cite{broadbridge2019random} and requires more advanced approximation approaches. Namely, the stochastic model in \cite{broadbridge2019random} yielded the representation of the Laplace series coefficients $a_{lm}(t)=C[A_l(t)+B_l(t)]a_{lm}(0)$ for some functions $A_l(t)$ and $B_l(t)$ which can be explicitly computed in terms of elementary functions. However, for the model {\rm(\ref{telegraph})}-{\rm(\ref{incond})} there is no such simple functional relation that links $a_{lm}(t)$ and $a_{lm}(0)$. As a result, there are no explicit elementary functional relations between $C_l(t,t')$, $R(\cos \gamma, t,t')$ and $C_l(0,0)$, $R(\cos \gamma, 0,0)$ respectively. To compute spectral and covariance functions of $T_H(\bold x,t)$ at time $t>0$ one has to use formulae (\ref{CovFunc}), (\ref{Alm1}) and (\ref{AngularSpectrum}). These integral representations are given in terms of the spectral measure $G(\cdot)$ and stochastic measures $Z_{lm}(\cdot)$ of the initial random condition field $\eta(\bold x)$.

By (1.2.5) in \cite{ivanov1989statistical}, it follows from 
$$R(r)=\int_{0}^{\infty} \frac{\sin(\mu r)}{\mu r} G(d\mu)$$
that 
\begin{align}\label{funG}
	G(\mu)=\sqrt{\frac{2}{\pi}}\int_{0}^{\infty}J_{3/2}(ur)(ur)^{3/2}\frac{R(r)}{r}dr,
\end{align}
which can be used to compute (\ref{CovFunc}), (\ref{AngularSpectrum}) and simulate $Z_{lm}(\cdot)$ for computations in (\ref{Alm1}). However, obtaining a reliable approximation of the integral in (\ref{funG}) requires the estimation of the empirical covariance function $\hat{R}(r)$ on a dense grid.
Moreover, for the CMB data observed on the sphere the covariance function can be estimated only for distances that do not exceed its diameter.
We postpone the solution of these technical problems and analysis of real data to future publications.  

In the following examples we study properties of solutions and their approximations using simulated data. The case of a discrete measure $G(\cdot)$ is considered, i.e. the support of $G(\cdot)$ is a finite set $\{\mu_i,\ i=1,\dots,I\}$.
We employ real-valued stochastic measures $Z_{lm}(\cdot)$ that are concentrated on this set and satisfy the condition 
$$G(\mu_i)=\bold E\ Z_{lm}^2(\mu_i)=\sigma_i^2,\ i=1,\dots,I.$$
We assume that the random field $\eta(\bold x)$ is centered Gaussian. Hence, we can choose $Z_{lm}(\mu_i)\sim N(0, \sigma_i^2)$ that are independent for different $l$, $m$ and $i$.

In these settings formulae (\ref{CovFunc}), (\ref{Alm1}) and (\ref{AngularSpectrum}) take the following discrete forms
\begin{equation}\label{Disc}
	R(\cos \gamma, t,t')=\sum_{i=1}^{I} \frac{\sin(2\mu_i\sin(\frac{\gamma}{2}))}{2\mu_i\sin(\frac{\gamma}{2})}\tilde{H}(\mu_i,t)\tilde{H}(\mu_i,t')\sigma_i^2,
\end{equation}
$$a_{lm}(t)=\pi \sqrt{2} \sum_{i=1}^{I} \frac{J_{l+\frac{1}{2}}(\mu_i)}{\sqrt{\mu_i}}\tilde{H}(\mu_i,t)Z_{lm}(\mu_i),$$ 
\begin{align}\label{simCl}
	C_l(t,t')=2\pi^2 \sum_{i=1}^{I} \frac{J_{l+\frac{1}{2}}^2(\mu_i)}{\mu_i}\tilde{H}(\mu_i,t)\tilde{H}(\mu_i,t')\sigma_i^2, 
\end{align}
which are convenient for simulations.

This approach can also be used to approximate absolutely continuous spectral measures $G(\cdot)$ by considering a sufficiently large $I$, small $|\mu_i-\mu_{i+1}|$ and $\sigma_i^2=G\big([\mu_i,\mu_{i+1}]\big)$, $i=1,...,I.$ 
\begin{Example} \rm
	This example illustrates changes over time of the covariance function $R(\cos{\gamma},0,t)$ defined by (\ref{CovFunc}) and the power spectrum $C_{l}(t,t)$ defined by (\ref{AngularSpectrum}). To produce plots and computations we used the corresponding discrete equations (\ref{Disc}) and (\ref{simCl}) with values $\sigma_{i}=\frac{100}{i}$ by $i\in\{1,2,\dots,10\}$ and a discrete spectrum concentrated on the interval $[1,40]$. All computations and plots in this example are presented for the values $c=1$ and $D=1$ of the parameters in equation (\ref{telegraph}). 
	
	Figure \ref{fig:a} shows the covariance $R(\cos\gamma, t,t)$ at the time lags $t=0,\ t=0.1$ and $t=0.5$ as functions of the angular distance $\gamma$. To understand the effect of time and the angular distance $\gamma$ on the covariance function we provided 3D-plots (see Figure \ref{fig:b}) showing the covariance as a function of the time lag $t$. The plot in Figure \ref{fig:b} is normalized by dividing each value by $\max\limits_{\gamma\in[0,\pi]} R(\cos\gamma,0,0)$. It is obvious that the covariance decays through time and changes very little except values of $\gamma$ which are close to 0. 
	To understand the effect of the parameters $c$ and $D$ on the covariance function we also produced Figure~\ref{COV_c_D}. It illustrates changes of the covariance function $R(\cos{\gamma},t,t)$ at a specific time $t$ as functions of the angular distances $\gamma$ and the parameters $c$ or $D$. To produce this figure we used $t=0.1$. Figure \ref{fig:a1} displays $R(\cos{\gamma},0.1,0.1)$ for $D=1$ as a function of $c$ and the angular distances $\gamma$. While Figure \ref{fig:b1} displays $R(\cos{\gamma},0.1,0.1)$ for $c=1$ as a function of $D$ and the angular distances $\gamma$. The plots in Figure \ref{COV_c_D} are normalized by dividing each value by $\max\limits_{\gamma\in[0,\pi]} R(\cos\gamma,0.1,0.1)$. It is clear form Figure \ref{fig:a1} that the covariance decays through $c$ (also through $D$, see Figure \ref{fig:b1}) and changes very little except values of $\gamma$ which are close to 0. 
	Figure~\ref{fig:b1} demonstrates that the normalized covariance function exhibits decaying periodic behavior when $D$ increases.
	\begin {figure}[htb!]
		\centering
		\vspace*{-.7cm}
		\subfigure[$R(\cos \gamma,t,t)$ at the time lags $t=0,\ 0.1$, and $0.5$ and angular distances $\gamma$ for $c=D=$~$1.$]{\label{fig:a}\includegraphics[trim = 0cm 0cm 0cm 1.2cm,clip,width=0.45\linewidth,height=0.48\linewidth]{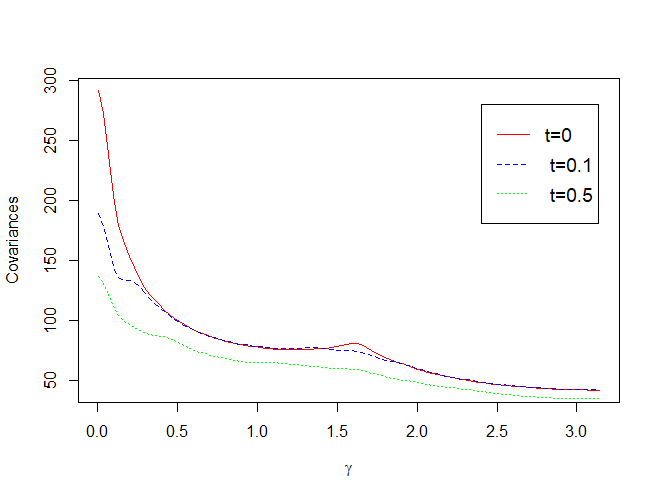}}\hspace{-0.5 cm}
		\subfigure[$R(\cos \gamma,t,t)$ for $c=D=1$ at time lag $t$ and angular distance $\gamma$. ]{\label{fig:b}	\vspace*{-.8cm}\includegraphics[trim = 4cm 2cm 5cm 1.cm,clip,width=0.55\linewidth,height=0.55\linewidth]{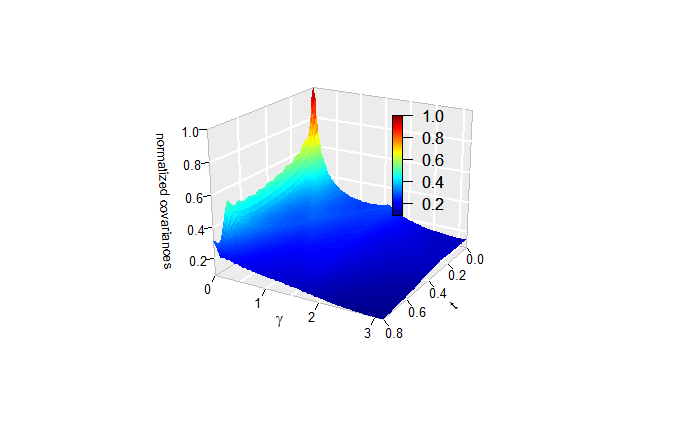}}
		\caption{}\label{fig:}
	\end{figure}
	\begin {figure}[h!]
		\centering
		\subfigure[$R(\cos \gamma,0.1,0.1)$ as a function of $\gamma$ and $c$ for D=1.]{\label{fig:a1}\includegraphics[trim = 3.9cm 2cm 5.5cm 1cm,clip,width=0.50\linewidth,height=0.52\linewidth]{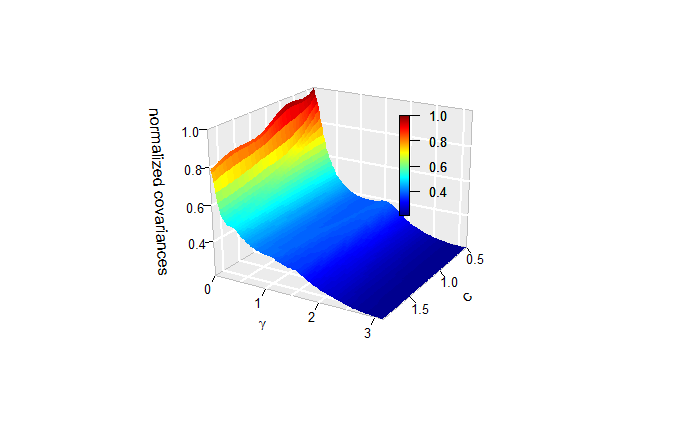}}\hspace{0.1 cm}
		\subfigure[$R(\cos \gamma,0.1,0.1)$ as a function of $\gamma$ and $D$ for $c=1$. ]{\label{fig:b1}\includegraphics[trim = 3.9cm 2cm 6cm 1cm,clip,width=0.48\linewidth,height=0.52\linewidth]{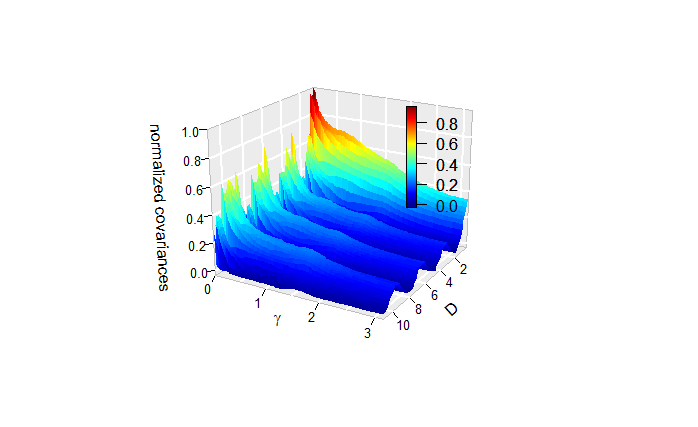}}
		\caption{}\label{COV_c_D}
	\end{figure} 
	
	Figure \ref{fig:aa} displays the power spectrum $C_{l}(t,t)$ as a function of $t\ge0$. To produce this figure we used $t\in[0,1]$ and $l=2,\ 5$, and 10. The first $70$ coefficients $C_l$ were computed by applying the equation~(\ref{simCl}) with the above values of $\sigma_{i},\ i=1,\dots,10$. From this figure it is clear that the power spectrum $C_{l}(t,t)$ decays very quick to 0 when $l$ increases. To investigate the effect of the parameter $l$ we provide a plot of the ratio $R_{0.1,0,l}=C_{l}(0.1,0.1)/C_{l}(0,0)$ for the first $70$ coefficients $C_l$ (see Figure \ref{fig:bb}). This figure confirms that the ratio $R_{0.1,0,l}$ is bounded by 1 and changes very little when $l$ increases. 
	\begin {figure}[h!]
		\centering
		\subfigure[The power spectrum $C_{l}(t,t)$ for $c=D=1$ and values $l=2,\ 5$ and 10.]{\label{fig:aa}\includegraphics[width=70mm,height=70mm]{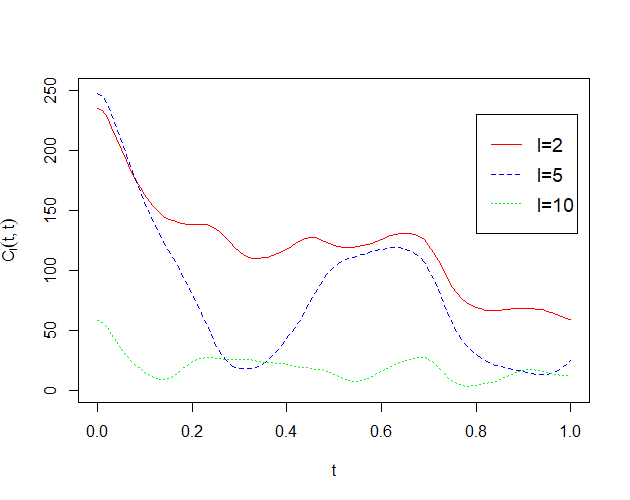}}\hspace{0.5 cm}
		\subfigure[The ratio $R_{0.1,0,l}$ of the first 70 coefficients for $c=D=$~$1$. ]{\label{fig:bb}\includegraphics[width=70mm,height=70mm]{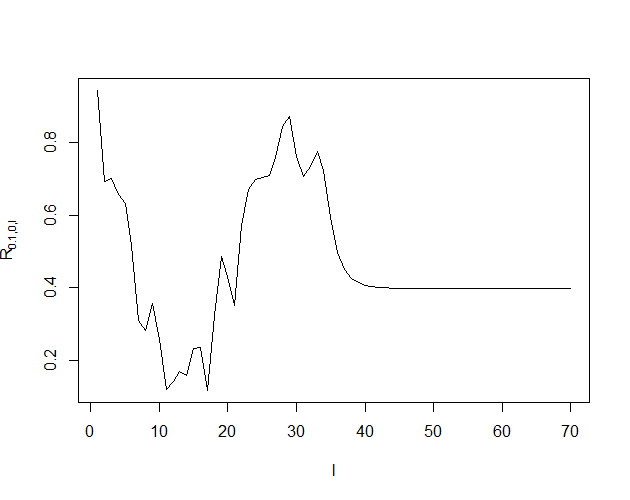}}
		\caption{}\label{figg:}
	\end{figure}
	Figure \ref{fig:c} plots the tail sums $\sum_{l\ge L}(2l+1)C_l(0,0)$ and $\sum_{l\ge L}(2l+1)C_l(0.1,0.1)$ as functions of $L$, while Figure~\ref{fig:cc} displays the corresponding ratio $RR_{0.1,0,L}=\frac{\sum_{l\ge L} (2l+1)C_{l}(0.1,0.1)}{\sum_{l\ge L}(2l+1)C_{l}(0,0)}$. From Figure~\ref{fig:c} it is clear that when $L$ increases the both terms $\sum_{l\ge L}(2l+1)C_{l}(0,0)$ and $\sum_{l\ge L}(2l+1)C_{l}(0.1,0.1)$ have the same asymptotic behaviour up to a constant multiplier which is also further confirmed in Figure~\ref{fig:cc}.
	\begin {figure}[h]
		\centering
		\subfigure[Plots of $\sum_{l\ge L}(2l+1)C_l(t,t)$ at $t=0$ and $t=0.1$.]{\label{fig:c}\includegraphics[width=70mm,height=70mm]{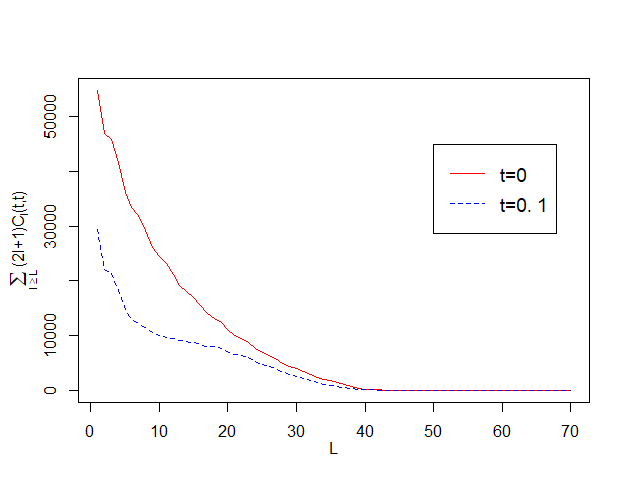}}
		\subfigure[The ratio $RR_{0.1,0,L}$. ]{\label{fig:cc}\includegraphics[width=70mm,height=70mm]{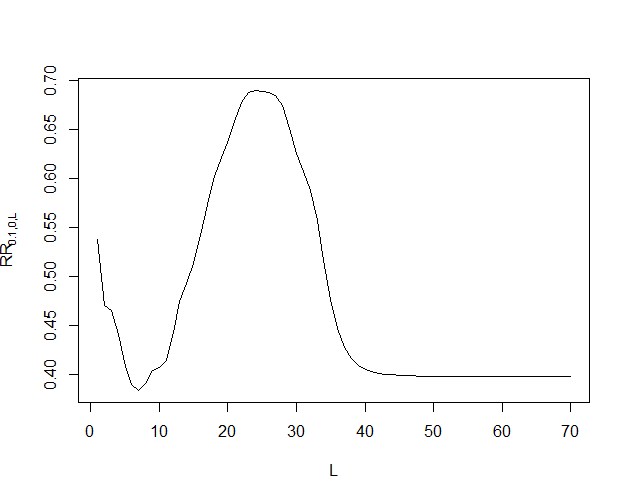}}
		\caption{}\label{figgg:}
	\end{figure}
\end{Example}
\begin{Example}
	{\rm In this example we use a discrete spectrum concentrated on the two intervals $[0,20]$ and $[80,90]$}. {\rm Thus, the initial condition random field $\eta(\bold x)$ has low and high frequency components. To produce realisations of $\eta(\bold x)$ and $T_{H}(\bold x,t), \ \bold x \in S^2,$ that are similar to small real CMB values we used $\sigma_i^2=0.00003$ and $0.0001$ for low and high frequency components respectively. These small values let us employ the visualisation tools and colour palettes used for CMB plotting in the R package rcosmo {\rm \cite{fryer2018rcosmo}} and the Python package healpy.
		
		To produce the plots and computations in this paper we use the first $100$ coefficients $C_l$ obtained by applying~(\ref{simCl}) to the above discrete spectrum. They are shown in Figure~{\rm \ref{fig5.1}} in red. 
		In this example we use the values $c=1$ and $D=2$ of the parameters in equation~(\ref{telegraph}).
		The coefficients $C_l(t,t)$ for $t=0.05$ and $0.1$ are plotted in blue and green respectively. The graph indicates two regions with relatively large values of $C_l$ that correspond to the spectral measure $G(\cdot)$ used for these computations. It can be seen that values $C_l(t,t)$ decrease over time. However, the corresponding spherical maps change rather slowly.
		Therefore, only two maps, for $t=0$ and $0.05$, are plotted in Figure~{\rm \ref{fig5.2}}.
		\begin {figure}[h]
			\vspace{-.3cm}
			\centering
			\includegraphics[width=0.6\linewidth,trim={0 0 0 1.5cm},clip]{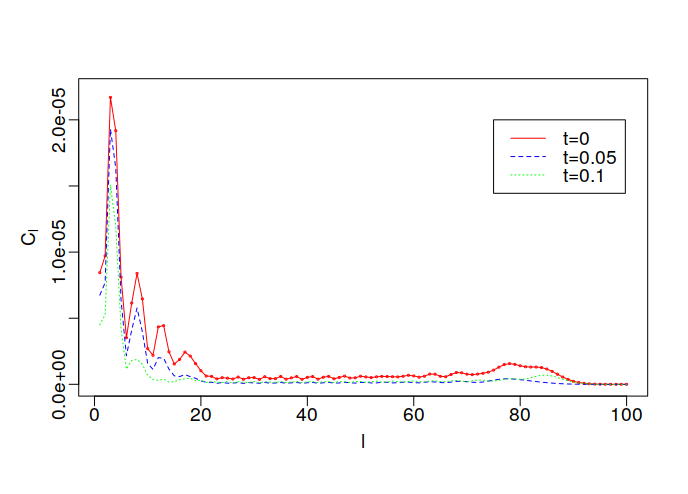} \vspace{-.5cm}
			\caption{Angular power spectra $C_l(t,t)$ for $c=1$ and $D=2$ at time $t=0,\  0.05$ and $0.1$.}\label{fig5.1}
		\end{figure}
	\begin {figure}[h]
	\centering
	\subfigure[Realisation of $T_H(\theta,\varphi,0)$ for $c=1$ and $D=2$.]{\label{fig:2a}\includegraphics[width=80mm]{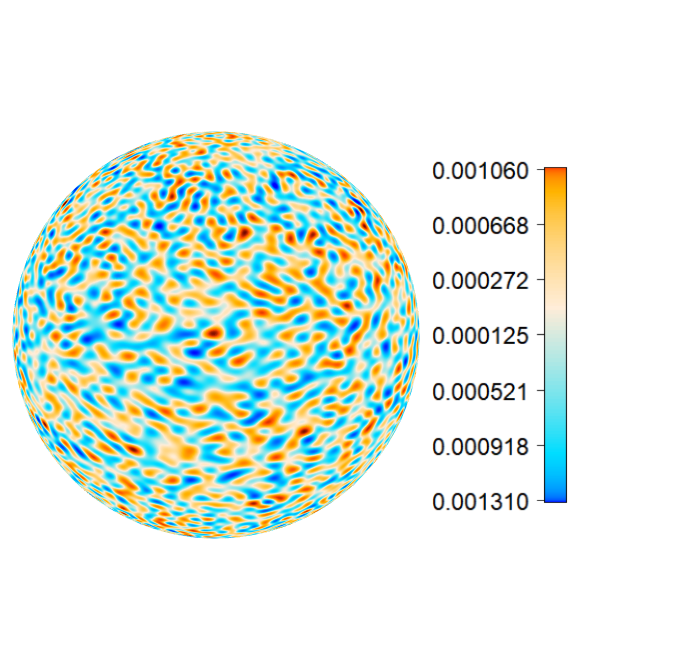}}\hspace{-8mm}
	\subfigure[Realisation of $T_H(\theta,\varphi,0.05)$ for $c=1$ and $D=2$ with two observation windows.]{\label{fig:2b}\includegraphics[width=80mm]{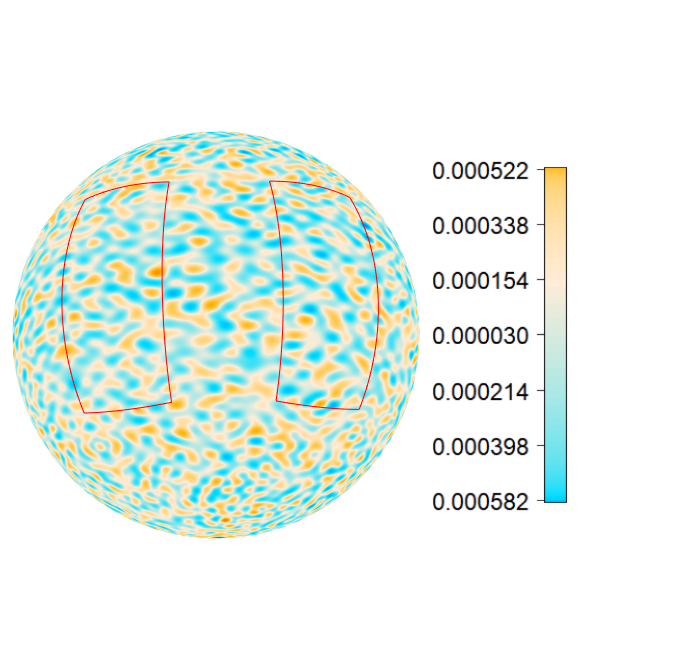}}
	\caption{}\label{fig5.2}
\end{figure}
		
		For the following numerical studies we used simulated data from two windows shown in Figure~\ref{fig:2b}. The estimated means in Table~\ref{table:5.1} confirm that $T_H(\theta,\bold x, t)$ has a zero mean. 
		It can be observed from Figure~\ref{fig5.2} and the estimated interquartile ranges (IQR) in Table~\ref{table:5.1} that the magnitude of $T_H(\bold x, t)$ values decreases with time. 
		However, the distribution type of the combined values does not change substantially. Namely, the combined values of $T_H(\bold x,t)$ exhibit approximately bell shaped behaviour with tails that are heavier than in the Gaussian case, see {\rm Figures~\ref{fig5.3} and~\ref{fig5.4}}. Similar results were obtained for various observation windows of $S^2$. For example, for the second rectangular window shown in {\rm Figure~\ref{fig:2b}} Q-Q plots and histograms of observations in this window are given in {\rm Figures~\ref{fig5.3} and~\ref{fig5.4}} respectively. These results about distributions of combined values are also confirmed by computing the Shannon entropy $$\hat{H} = - \sum_{i=1} \hat{p}_i \log (\hat{p}_i)$$ for the empirical distributions $\{\hat{p_i}\}$ given by the histograms in {\rm Figure~\ref{fig5.4}}.
		Values of $\hat{H}$ do not change much over time~$t$, see Table~\ref{table:5.1}. They are not substantially different from the entropy upper bound $\log (16)\approx 2.77$}.
	\begin{table}[h]
		\begin{center}
			\begin{tabular}{ |p{3cm}|p{2cm}|p{2cm}|p{2cm}|  }
				\hline
				Time $t$&  0 & 0.05& 10\\
				\hline
				Mean for window 1   & $1.353\cdot10^{-5}$    &$-5.62\cdot 10^{-6}$&   $3.501\cdot 10^{-7}$\\
				Mean for window 2&   $7.083\cdot 10^{-6}$  & $-1.132\cdot10^{-5}$   &$-5.166\cdot10^{-8}$\\
				IQR for window 1 &$2.877\cdot 10^{-4}$ & $1.307\cdot 10^{-4}$&  $6.78\cdot 10^{-6}$\\
				IQR for window 2    &$3.252\cdot 10^{-4}$ & $1.452\cdot 10^{-4}$&  $7.11\cdot 10^{-6}$\\
				Entropy for window 1&   $2.193$  & $2.116$&$2.369$\\
				Entropy for window 2& $2.302$  & $2.221$   &$2.387$\\
				$q$-statistics& $1.986\cdot 10^{-4}$  & $7.272\cdot 10^{-4}$&$1.5\cdot 10^{-3}$\\
				\hline
			\end{tabular}
			\caption{}
			\label{table:5.1}
		\end{center}
	\end{table}
	
	\begin {figure}[h]
		\centering
		\subfigure[Normal Q-Q plot of all $T_H(\theta,\varphi,0)$ values from window 2 in Figure~\ref{fig:2b}. ]{\label{fig:3a}\includegraphics[width=70mm]{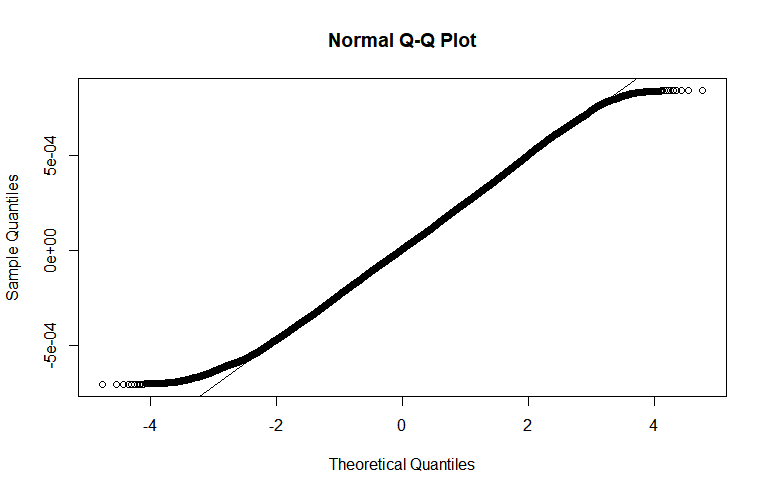}}\hspace{0.5 cm}
		\subfigure[Normal Q-Q plot of all $T_H(\theta,\varphi,0.05)$ values from window 2 in Figure~\ref{fig:2b}.]{\label{fig:3b}\includegraphics[width=70mm]{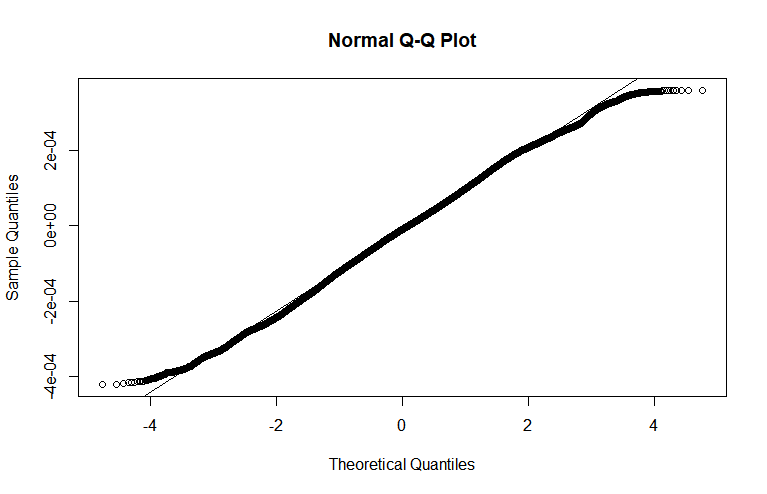}}
		\caption{}\label{fig5.3}
	\end{figure}
	
	\begin {figure}[h]
		\centering
		\subfigure[Histogram of all $T_H(\theta,\varphi,0)$ values from window 2 in Figure \ref{fig:2b}.]{\label{fig:4a}\includegraphics[width=70mm]{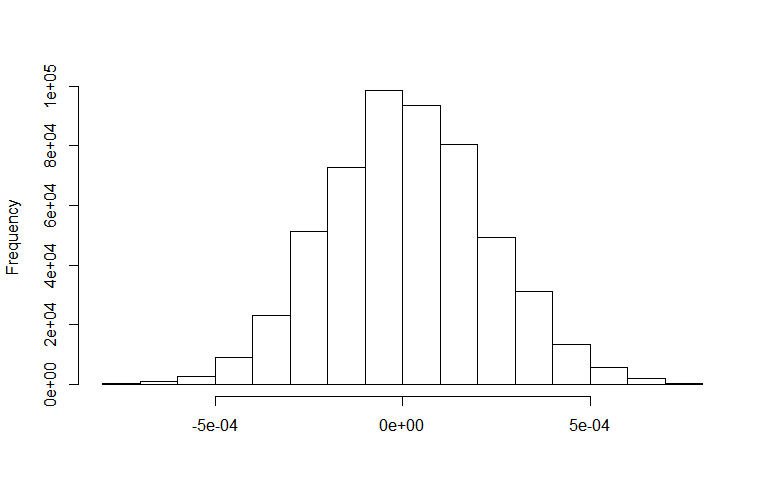}}\hspace{.5 cm}
		\subfigure[Histogram of all $T_H(\theta,\varphi,0.05)$ values from window 2 in Figure \ref{fig:2b}]{\label{fig:4b}\includegraphics[width=70mm]{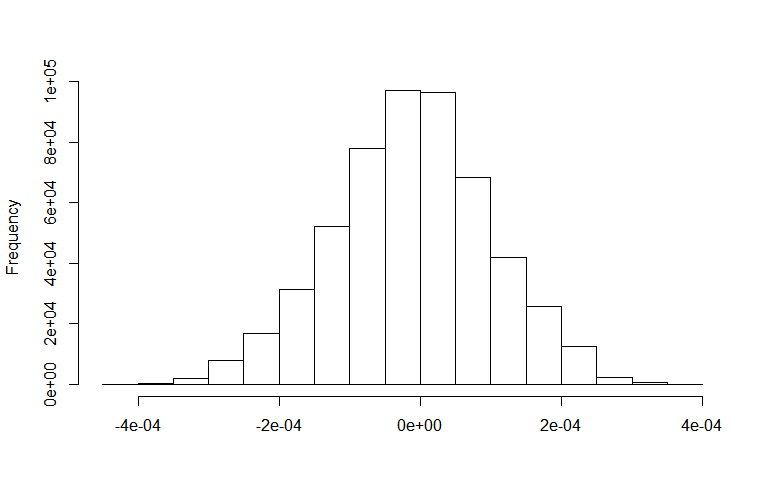}}\caption{}\label{fig5.4}
	\end{figure}
	{\rm The $q$-statistics, see~\cite{wang2016measure}, was used to investigate heterogeneity between values of $T_{H}(\theta,\varphi,t)$ in windows 1 and 2 from Figure~\ref{fig:2b}. Table~\ref{table:5.1} indicates that heterogeneity is absent at time $0$ and the evolution due to the model (\ref{telegraph}) does not introduce heterogeneity at least for short time periods. }

\end{Example}
\section{Entropy and hyperbolic diffusion}\label{sec5.9}
This section discusses the evolution of Shannon entropy for hyperbolic diffusion.
Theoretical analysis and several numeric examples are presented. To simplify the exposition and plots, only the case of $x \in \mathbb{R}$ and various non-random initial conditions are studied. 

For diffusive transport that arises from random motion of particles, the mass distribution may indeed be regarded as a probability distribution, after which Shannon entropy may be calculated. For a simple thermodynamic system governed purely by linear or nonlinear heat conduction, there is a close analogy between thermodynamic entropy and Shannon entropy (e.g. \cite {broadbridge2008entropy,Jaynes}). When the transport mechanism is modified to hyperbolic diffusion, the behaviour of entropy requires more scrutiny. In order to illustrate this, consider one dimensional solutions $q(x,t)$ on $[-\ell,\ell]\times\mathbb R^+$, subject to Neumann boundary conditions
$$q_x(x,t)=0,~~~x=\pm L.$$
This may represent transport in the $x$-direction through a linear conduit of cross section area A, with the variation of density in each cross section being effectively zero. It will be seen that the total mass M is constant. Therefore, the scaled density $q^*=qA/M$  has constant unit integral on $[-L,L]$, from which physically relevant non-negative solutions $q^*(x,t)$ may be regarded as distributions. By choosing length scale $D/c$ and time scale $D/c^2$, it may be assumed that the coefficients in the hyperbolic diffusion equation are normalised to $\pm 1$.

Let $t*=tc^2/D$, $x^*=xc/D$ and $L^*=Lc/D$. Then
$$q*_{t^*}+q^*_{t^*t^*}=q^*_{x^*x^*},$$
subject to boundary conditions
$$q^*_{x^*}=0,~~x^*=\pm L^*$$
and initial conditions
$$q^*(x^*,0)=u_0(x^*),~~q^*_{t^*}(x^*,0)=v_0(x^*).$$

Defining Shannon entropy density to be $s=-q^*\log q^*$, the hyperbolic diffusion equation for $q^*(x,t)$ implies
\begin{equation}
	s_t+\frac{D}{c^2}s_{tt}=D\frac{q^{*2}_{x}-\frac{1}{c^2}q^{*2}_t}{q^*}.
\end{equation}
The case of unbounded speed of propagation is obtained by taking the limit $c\to\infty$, which results in a positive entropy production rate $Dq^{*2}_x/q^*$. This is familiar from the theory of heat conduction, for which the entropy production rate is $L_e DT_x^2/T$, where T is absolute temperature and $L_e$ is the Lewis number, which is the order-1 ratio of thermal diffusivity to mass diffusivity.

For uni-directional waves of velocity $\pm c$, the entropy production rate is zero. For bi-directional waves, the total Shannon entropy is constant when opposite-travelling waves are not superposing, increasing when opposite-travelling superposing waves are separating, and decreasing when they are superposing and approaching. However, non-constant travelling wave solutions of the hyperbolic diffusion equation must have speed less than $c$ and they must have an amplitude that decreases with time.
For the remainder of this section, the asterisk superscripts will be conveniently omitted.\\
Some solutions of the hyperbolic diffusion equation may be of dissipative diffusive type while others may be dissipative bi-directional waves. In order to illustrate this, by the completeness of the Fourier transform, the general even solution by separation of variables is,
\begin{eqnarray}
	\label{Fourier}
	q=a_0+\sum_{n=1}^{n_c} [a_n e^{-\alpha_n^+t}+b_n e^{\alpha_n^-t}]\cos (k_nx)\\
	+\sum_{n=n_c+1}^\infty a_ne^{-0.5t}\cos (\omega_nt)\cos (k_nx),
\end{eqnarray}
where $n_c=[L/2\pi]_-$, $k_n=n\pi /L$, $\omega_n=k_n\sqrt{1-1/(2k_n)^2}$ and $\alpha_n^{\pm}=\frac 12(1\pm\sqrt{1-4k_n^2})$.

The first summation covers modes that are purely dissipative in character, just as for the linear heat diffusion equation. However in this case, the dissipative modes exist only when $L\ge 2\pi$. The second summation covers standing wave modes with decaying amplitude. These may be regarded as a superposition of a decaying left-travelling wave and a decaying right-travelling wave. Note that the dissipative mode with logarithmic decay rate $\alpha_1^-$ decays more slowly than all other modes.\\
The above solution is mass-conserving with mean value $a_0$ and constant mass integral $2La_0=1$ by normalisation. For a single decaying standing wave mode of a hyperbolic diffusion equation distribution, for some value of t,
$$q=\frac{1}{2L}[1+e^{-0.5t}\cos (\omega_{n}t)\cos(k_{n}x)].$$
Then the total Shannon entropy is
$$S=\int_{-L}^Lq\log (1/q)dx.$$
At times $t=(2m+1)\pi/2\omega_n;~~m\in\mathbb Z$, the distribution is uniform, which is the state of maximum entropy $S=\log (2L)$. Overall, the total entropy oscillates as it approaches the limiting equilibrium state. However the negative excursions of entropy may be quite small since the amplitude of oscillation decreases exponentially.

\begin{figure}
	\includegraphics[scale=0.4]{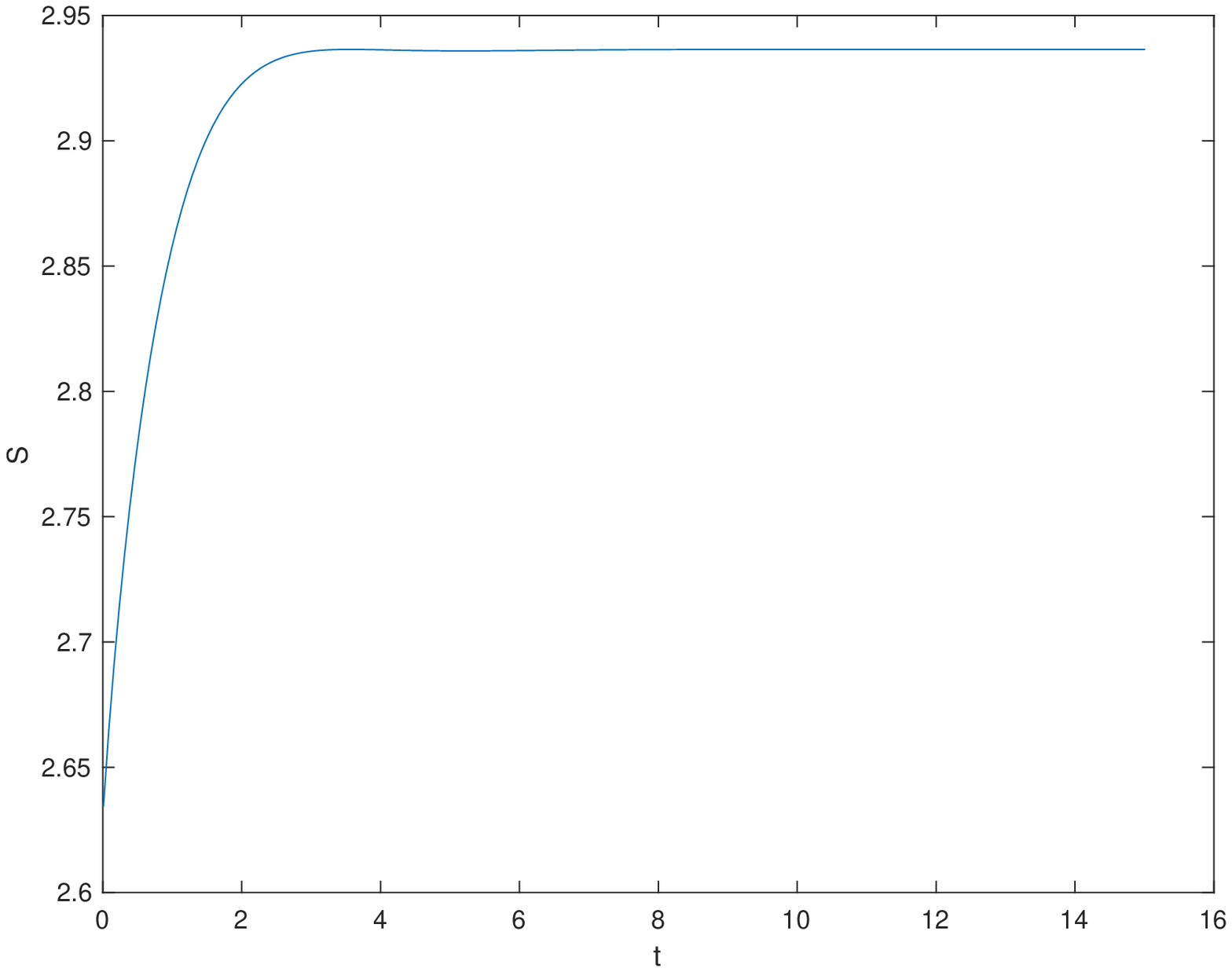}
	\nonumber
	\includegraphics[scale=0.4]{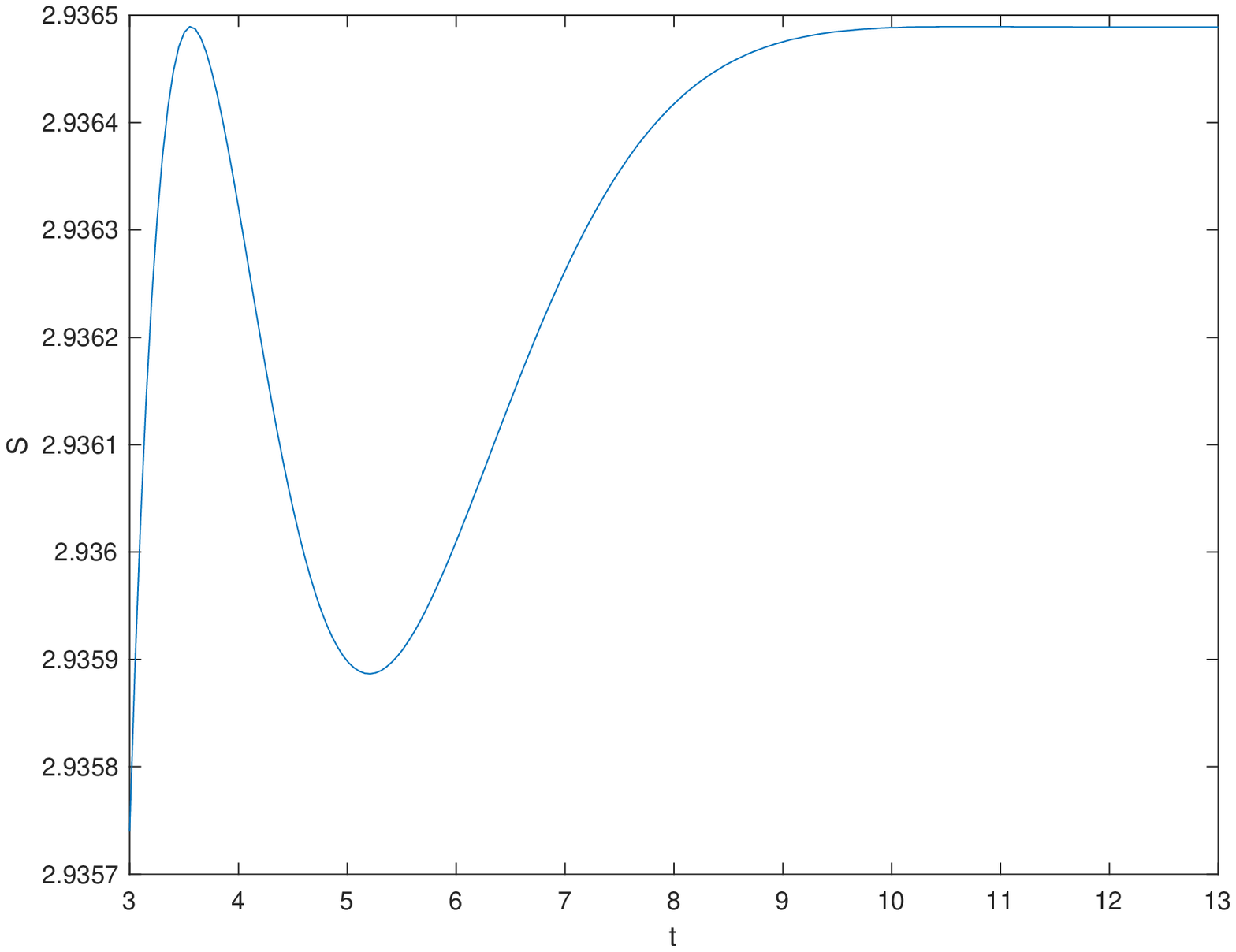}
	
	\caption{Total entropy for standing wave with single harmonic. $L=3\pi$, wave number $k_2=2\pi/L.$}	\label{EntropyS}
\end{figure} 
Figure \ref{EntropyS} plots the total entropy, calculated by trapezoidal integration with 400 intervals, versus time.\\
It would be helpful to have a point-source solution for the hyperbolic diffusion equation. As far as we are aware, there is no known simple expression for the point source evolution but it has the standard uniform Fourier spectrum that evolves according to (\ref{Fourier}). It is plotted in Figure \ref{PointSource} after truncating the Fourier series at 100 terms. As in the d'Alembert wave equation, two separating travelling delta waves emerge but now the amplitudes of the truncated spikes are decreasing and there is an additional central symmetric hump due to the purely diffusive terms. The leading edges of the spikes are travelling at maximum speed $c$. In two and three dimensions there would be similar solutions with a single travelling cylindrical or spherical shock wave surrounding a central hump.\begin{figure}
	
	\includegraphics[scale=0.4]{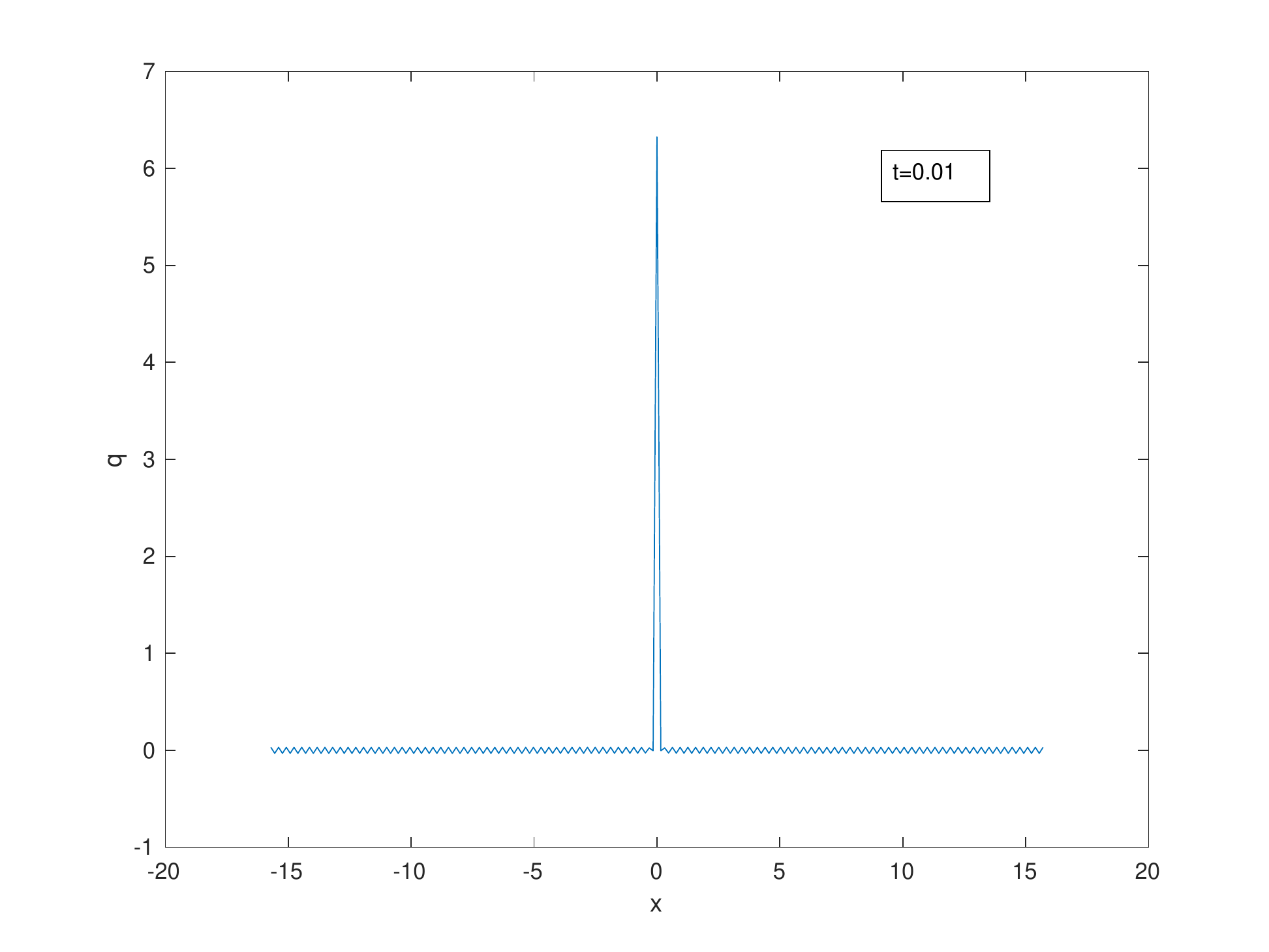}
	\includegraphics[scale=0.4]{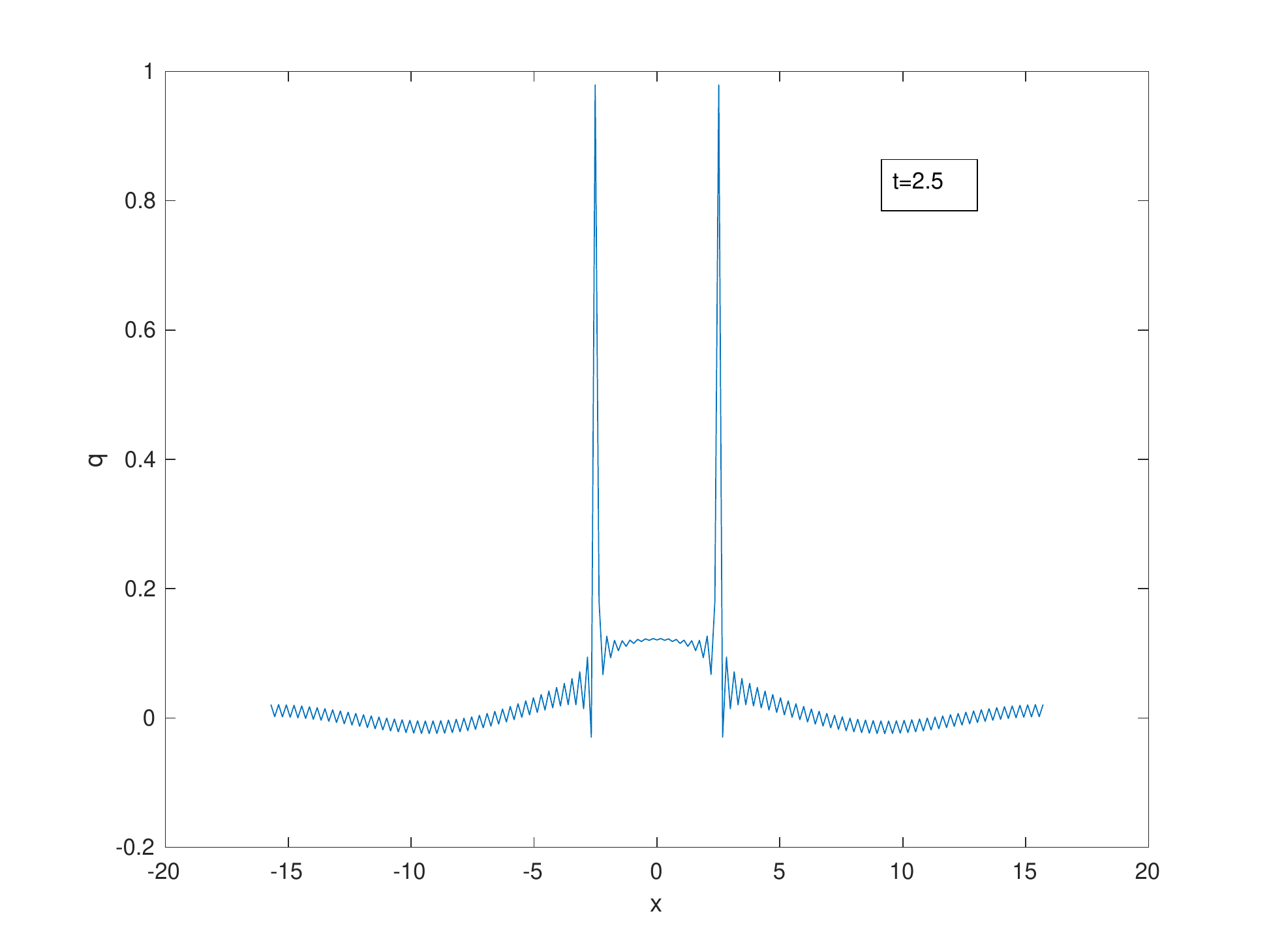}
	\caption{Evolving spike solution for $L=3\pi$.}	\label{PointSource}
\end{figure}

It is instructive also to view the motion of an initial rectangular disturbance of finite amplitude. This is approximated in Figures \ref{PointSource}-\ref{Rectangle} by a Fourier series of 200 terms. The truncated Fourier series is an exact solution but due to the truncation and the boundary conditions, the solution is negative at some values of the domain, so that Shannon entropy cannot be calculated. However, the solution is indicative of the behaviour of a non-negative solution with initial rectangle.
As in the bidirectional wave equation, the symmetric solution consists of two superposed rectangles that increase entropy as they begin to separate by travelling in opposite directions. After they have separated, their amplitude decreases which leads to further entropy increase. The height of the leading edge decreases more rapidly than the trailing edge so each rectangle evolves to a trapezoid. The leading edge, which is the boundary of the disturbance, continues to move at maximum speed $c$. Between the trapezoids, there is a central hump that eventually dominates, and resembles a diffusive Gaussian, increasing entropy further. With this kind of peaked initial condition, there is no indication of any significant period of entropy decrease.
\begin{figure}
	\includegraphics[scale=0.4]{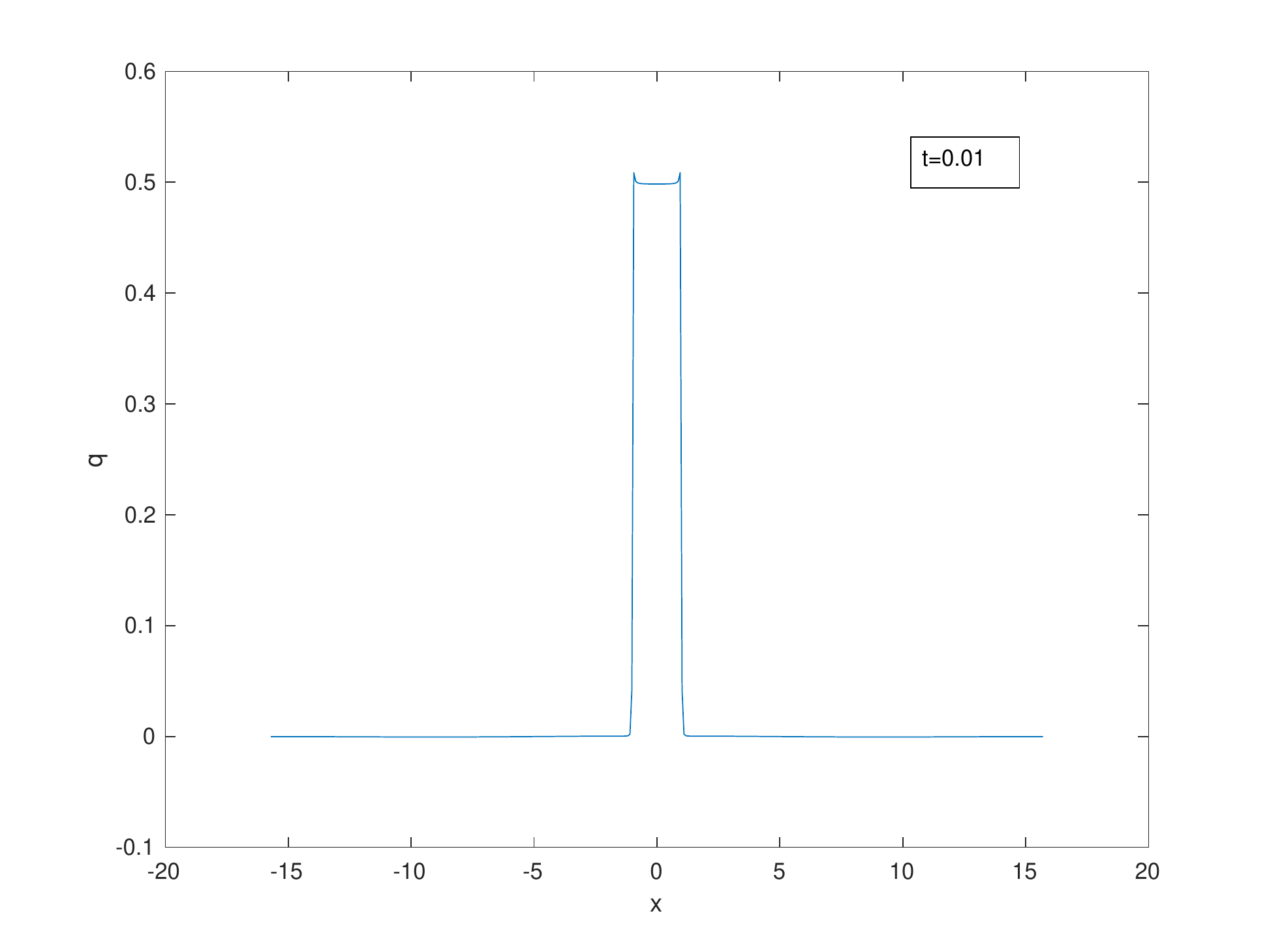}
	\includegraphics[scale=0.4]{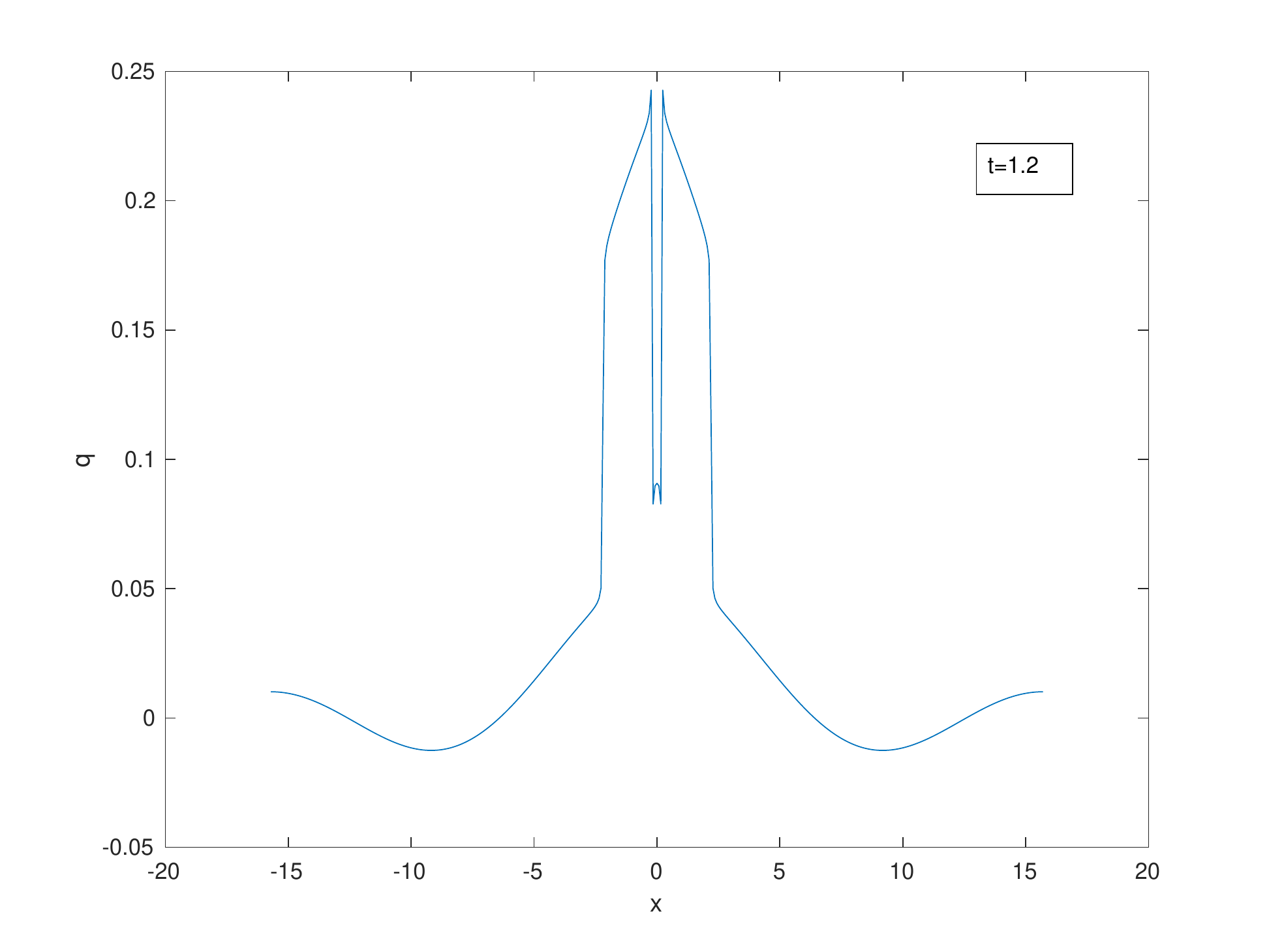}
	\caption{Evolving symmetric rectangle: emergent bidirectional wave.}	\label{Rectangle}
\end{figure}
\begin{figure}
	\includegraphics[scale=0.4]{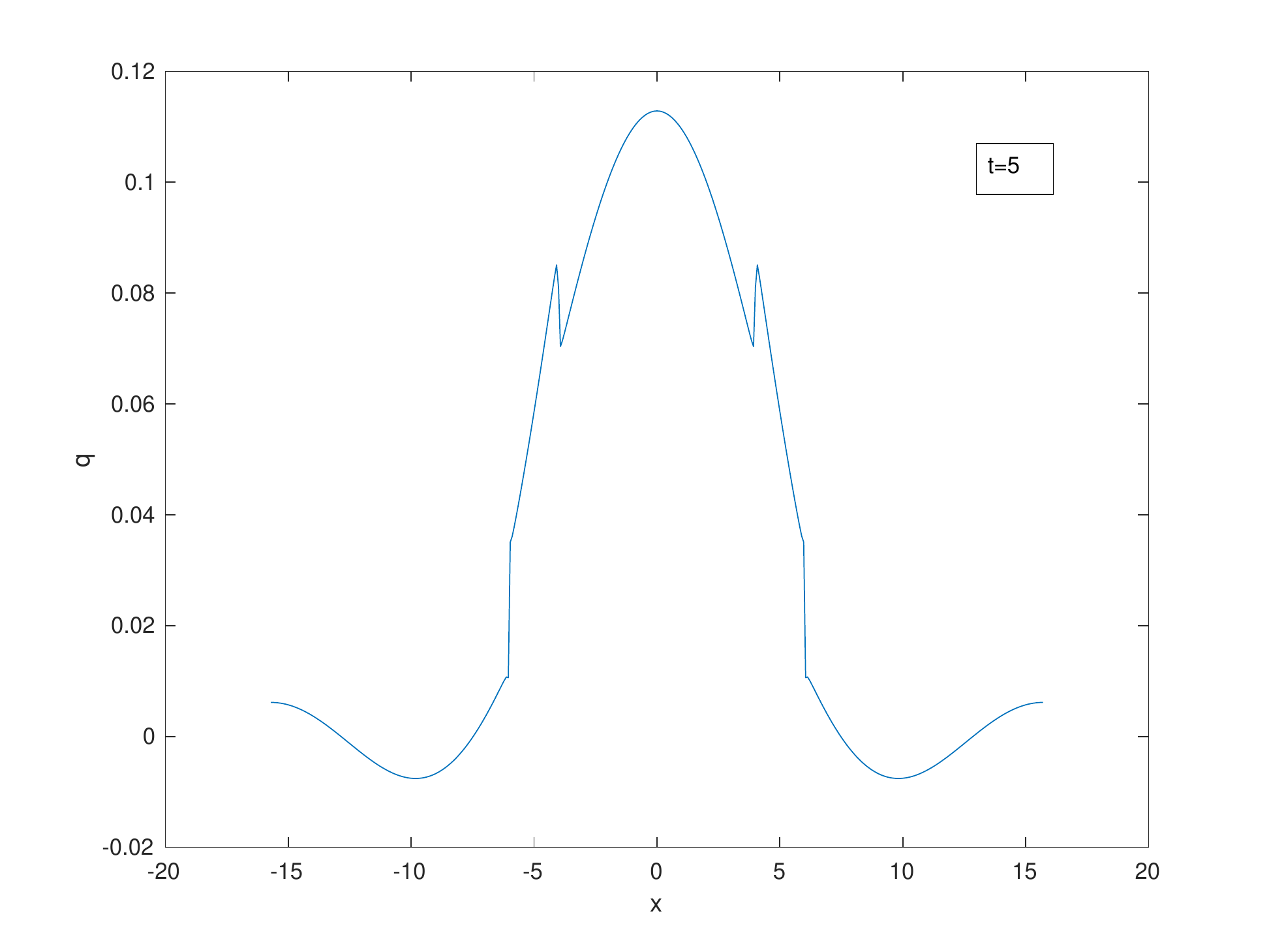}
	\includegraphics[scale=0.4]{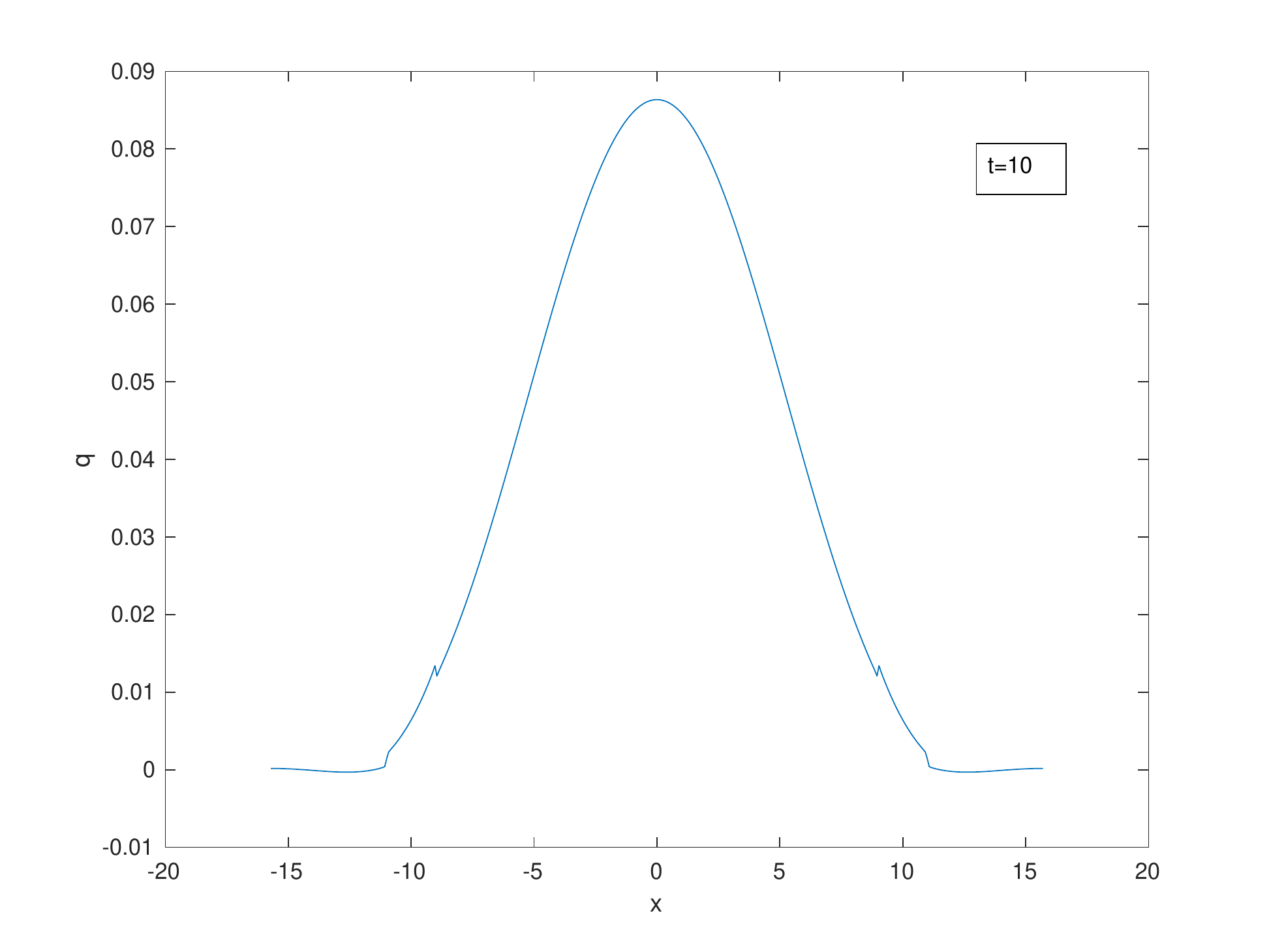}
	\caption{Evolving rectangle: dominant diffusive hump at large $t$, with leading edge of remnant rectangle demarcating the extent of the disturbance.}
\end{figure}
\section{Future research problems}
This paper investigates evolutions of random fields determined by hyperbolic diffusion equations with random initial conditions.
Spherical random fields are modeled as restrictions of 3D solutions fields to the sphere. Compared to the previous publications, it results in more realistic physical models, but arises more complicated representations that involve spectral measures of initial random conditions.
Detailed studies of the solutions and their approximations are presented. 

Some important problems and extensions for future research are:
\begin{itemize}
	\item investigating the sharpness of the obtained upper bounds on approximation errors, see \cite{broadbridge2019random};
	\item  developing statistical estimators of the equation parameters and studying their asymptotic properties;
	\item extending the methodology to tangent spherical vector fields, see~\cite{li2019fast};
	\item developing numerical methods for the obtained representations to deal with spectra of initial conditions;
	\item extending the analysis and numerical studies in Section~\ref{sec5.9} to other scenarios;
	\item in line with the theme of this special issue, in future we intend to study the effect of nonlinear diffusivity in the equation
	$$q_t+\frac{1}{c^2}q_{tt}=\nabla \cdot [D(q)\nabla q].$$ For example, if $q$ is the electron density in a plasma, $D(q)$ is typically decreasing \cite{PBJG}.
\end{itemize} 





\vspace{6pt} 





{\bf Acknowledgments} {This research was supported under the Australian Research Council's Discovery Project DP160101366. We are also grateful for the use of data of the
	Planck/ESA mission from the Planck Legacy Archive. }

\end{document}